\newcommand\HHFm{{\mathbf{HF}}^-}
\newcommand\orM{\vec{M}}
\newcommand\orN{\vec{N}}
\newcommand\orL{\vec{L}}
\newtheorem{thm}{Theorem}[section]
\newtheorem{cor}[thm]{Corollary}
\newtheorem{lem}[thm]{Lemma}
\newtheorem{prop}[thm]{Proposition}
\newtheorem{defn}[thm]{Definition}
\newtheorem{exas}[thm]{Examples}
\newtheorem{rem}[thm]{Remark}
\numberwithin{equation}{section}
\newcommand\Field{\mathbb F}
\newcommand\Dual{\mathcal D}
\newcommand\Duality\Dual
\newcommand\Tor{\mathrm{Tor}}
\newcommand\ws{\mathbf w}
\newcommand\zs{\mathbf z}
\newcommand\x{\mathbf x}
\newcommand\p{\mathbf p}
\newcommand\q{\mathbf q}
\newcommand\y{\mathbf y}
\newcommand\ModSphere{\ModFlow\left({\mathbb S}\longrightarrow 
\Sym^{g-1}(\Sigma_{1})\times \Sym^2(\Sigma_{2})\right)}
\newcommand\ModSpheres\ModSphere
\newcommand\gr{\mathrm{gr}}
\newcommand\UnparModSp{\widehat \ModSp}
\newcommand\UnparModFlow\UnparModSp
\newcommand\Mod\ModSp
\newcommand\PD{\mathrm{PD}}
\newcommand{\spinc}{\mathfrak s}
\newcommand\ModMaps{\mathcal M}
\newcommand\ModSp\ModMaps
\newcommand\Ta{{\mathbb T}_{\alpha}}
\newcommand\Tb{{\mathbb T}_{\beta}}
\newcommand\alphas{\mbox{\boldmath$\alpha$}}
\newcommand\betas{\mbox{\boldmath$\beta$}}
\newcommand\Ring{\mathbb A}
\newcommand{\rr}{\mathbf r}
\newcommand\Chain{\mathfrak A}
\newcommand\HH{\mathbb H}
\newcommand\oHH{\overline\HH}
\newcommand{\Zmod}[1]{{\mathbb Z}/{#1}{\mathbb Z}}
\newcommand{\grrr}{{\rm {gr}}}
\newcommand{\Sym}{{\mathrm {Sym}}}
\newcommand{\s}{\mathbf s}   \renewcommand{\t}{\mathbf t}
 \newcommand{\Z}{\mathbb Z} \newcommand{\N}{\mathbb
  N} \newcommand{\Q}{\mathbb Q} \newcommand{\R}{\mathbb R}
\newcommand{\HFa}{{\widehat {\rm {HF}}}}
\newcommand{\HFmi}{{\rm {HF}}^-}
\DeclareMathOperator{\SpinC}{Spin^c}
\begin{document}

\title{A spectral sequence on lattice homology}

\author{Peter Ozsv\'ath}
\address{Department of Mathematics, Princeton University\\
Princeton, NJ, 08544}
\email{petero@math.princeton.edu}

\author{Andr\'{a}s I. Stipsicz}
\address{R{\'e}nyi Institute of Mathematics\\
Budapest, Hungary and \\
Institute for Advanced Study, Princeton, NJ}
\email{stipsicz@math-inst.hu}

\author{Zolt\'an Szab\'o}
\address{Department of Mathematics, Princeton University\\
Princeton, NJ, 08544}
\email{szabo@math.princeton.edu}

\subjclass{57R, 57M} \keywords{Lattice homology, Heegaard Floer
  homology, spectral sequence}

\begin{abstract}
  Using the link surgery formula for Heegaard Floer homology we find a
  spectral sequence from the lattice homology of a plumbing tree to
  the Heegaard Floer homology of the corresponding 3-manifold.  
  This spectral sequence shows that for graphs with at most two
  ``bad'' vertices, the lattice homology is isomorphic to the Heegaard
  Floer homology of the underlying 3-manifold.
\end{abstract}

\maketitle

\newcommand\CFinfComb{{\mathbb{CF}}^{\infty}}
\newcommand\HFinfComb{{\mathbb{HF}}^{\infty}}
\newcommand\HFmComb{{\mathbb{HF}}^{-}}
\newcommand\HFpComb{{\mathbb{HF}}^{+}}
\newcommand\HFComb{{\mathbb{HF}}}
\newcommand\HFaComb{{\widehat {\mathbb{HF}}}}
\newcommand\CFmComb{{\mathbb{CF}}^{-}}
\newcommand\CFpComb{{\mathbb{CF}}^{+}}
\newcommand\Char{\mathrm{Char}}
\newcommand\Vertices{\mathrm{Vert}}
  \newcommand{\Psia}{\widehat {\Psi}}
  \newcommand{\Phia}{\widehat {\Phi}}
 \newcommand\CFaComb{\widehat {\mathbb{CF}}}

\section{Introduction}
Heegaard Floer homologies were introduced in 2001 by the first and
third authors as invariants of closed, oriented 3-manifolds
\cite{OSzF1, OSzF2}.  The construction of the invariants relies on a
choice of a Heegaard decomposition of the 3-manifold at hand, and then
applies Lagrangian Floer homology to a symplectic manifold (and two
Lagrangian subspaces of it) associated to the Heegaard
decomposition. The theory comes in many variants: the version $\HFmi
(Y)$ is the most powerful in 3- and 4-dimensional applications,
while the simpler $\HFa (Y)$ turnes out to be more accessible for
computation. Since the introduction of the invariants, many results
have been found towards their computability \cite{MOS, ManOzsThu, nice,
  SW}, but a convenient computational scheme in general is still
missing.  For 3-manifolds which can be presented as the boundary of a
negative definite plumbing with at most one {\em bad vertex} (in the
sense of Definition~\ref{def:type}), a relatively simple computational
algorithm was described in \cite{OSzplum}.

Motivated by the result of \cite{OSzplum}, in \cite{lattice} Andr\'as
N\'emethi introduced an algebraic object,
the \emph{lattice homology}
for plumbed 3-manifolds, which --- when considered for negative
definite plumbings --- provides a bridge between certain analytic
properties of the singularity with resolution the given plumbing, and
the differential topology of the boundary 3-manifold. Since lattice
homology extends the combinatorial approach found in \cite{OSzplum} to
more general plumbings, it can be shown that for a negative definite
plumbing tree $G$ with at most one bad vertex, the lattice homology
$\HFmComb (G)$ and the Heegaard Floer homology group $\HFmi (Y_G)$ of
the plumbed 3-manifold $Y_G$ (obtained by plumbing circle bundles over spheres
according to $G$) are isomorphic.  Indeed, N\'emethi extended the
isomorphism of \cite{OSzplum} to a larger class of plumbing graphs
which he called almost-rational \cite{lattice}. (For the definition of
these notions, see Section~\ref{sec:second}. See also \cite{s3csomok}
for related results.)  His results can be viewed as evidence for a
conjecture that, for a plumbing tree $G$, the lattice homology
$\HFmComb (G)$ is isomorphic to the Heegaard Floer homology $\HFmi
(Y_G)$ of the corresponding 3-manifold $Y_G$.  Further evidence to the
validity of this conjecture is provided by the proof of a surgery
exact triangle in lattice homology by Greene and (independently) by
N\'emethi \cite{Josh, latticetriangle}, and by the introduction of
knot lattice homology \cite{latticeknot}, cf. also \cite{Lspace}.

In the present paper we show the existence of a spectral sequence from
the lattice homology of a tree $G$ to the Heegaard Floer homology of
the corresponding plumbed 3-manifold $Y_G$.  This spectral sequence is
derived from the surgery presentation of Heegaard Floer homology from \cite{ManOzs}, compare also \cite{OSzint,
  OSzrac}.  In the statement below, the groups ${{\HFmComb} }(G)$ and
${\mathbf {HF}} ^-(Y_G)$ denote the regular lattice and Heegaard Floer
homologies after completion (with respect to the $U$ variable).  For a
definition of $\HFmComb (G)$ see Section~\ref{sec:review}.  When the
3-manifold $Y_G$ is a rational homology sphere then the completed
versions of the homologies determine the ones defined over the
polynomial ring, cf. \cite{ManOzs}; moveover, the closed four-manifold
invariants can be defined using only the completed theory.  The main
result of the paper is:

\begin{thm}\label{thm:mainss}
  Suppose that $G$ is a plumbing tree of spheres,
  and let  $Y_G$ be the corresponding $3$-manifold.  Then there is a spectral
  sequence $\{ E_i\} _{i=1}^{\infty}$ with the properties:
  \begin{itemize}
  \item The $E_2$-term of the spectral sequence is isomorphic to the lattice
    homology ${{\HFmComb} }(G)$. 
  \item The spectral sequence converges to ${\rm {\mathbf {HF}}} ^- (Y_G)$.
  \item The lattice homology $\HFmComb(G)$ naturally splits according
    to $\SpinC$ structures over $Y_G$ (see text preceding
    Definition~\ref{def:SpinCSplitting}); similarly, $\HHFm(Y_G)$
    splits according to $\SpinC$ structures. The spectral sequence
    respects these splittings.
\item If $\s\in\SpinC(Y_G)$ is a torsion $\SpinC$ structure (e.g.  if
  $Y_G$ is a rational homology sphere, this holds for any
  $\s\in\SpinC(Y_G)$), the isomorphism of the $E_2$-term with
  $\HFmComb (G)$ preserves the absolute Maslov grading.
\item If $\s\in\SpinC(Y_G)$ is a non-torsion $\SpinC$ structure, the
  isomorphism of the $E_2$-term with $\HFmComb (G)$ preserves the
  relative Maslov grading.
  \end{itemize}
\end{thm}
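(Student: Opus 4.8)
The plan is to realize $Y_G$ as integral surgery on a framed link and to apply the link surgery formula of Manolescu--Ozsv\'ath \cite{ManOzs}, organizing the resulting completed surgery complex so that the lattice homology appears as the $E_2$-page of a natural filtration spectral sequence. First I would fix the standard presentation of $Y_G$ as surgery on the framed link $L\subset S^3$ whose components are unknots, one for each vertex of $G$, with pairwise linkings and framings read off from the edges and weights of $G$. The surgery formula then produces a completed chain complex $\mathcal{C}$, built as a hypercube indexed by sublinks $M\subseteq L$ (equivalently over $\{0,1\}^n$), whose vertices are the completed large-surgery complexes $\mathfrak{A}^-$ summed over the Alexander multigrading lattice $\mathbb{Z}^n$, and whose maps are the associated inclusion maps $\Phi^{\vec{M}}$; its homology is $\HHFm(Y_G)$. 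The decisive simplification is that every component of $L$ is an unknot, so each vertex complex is a single tower $\mathbb{F}[[U]]$ in each relevant multigrading; this is exactly the combinatorial input that will make the vertices match lattice points.

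Next I would filter $\mathcal{C}$ by cube dimension, i.e. by the number of active components entering the surgery maps, so that the differential decomposes as $d_0+d_1+\cdots$, where $d_0$ is the internal differential, $d_1$ is the collection of edge maps dropping one active component, and $d_j$ for $j\geq 2$ comes from the higher maps $\Phi^{\vec{M}}$ with $|M|\geq 2$. Taking homology with respect to $d_0$ gives the $E_1$-page, which by the unknot computation above is freely generated over $\mathbb{F}[[U]]$ by pairs (Alexander multigrading $x\in\mathbb{Z}^n$, sublink $M$), with the $U$-powers recording the weight function. Identifying a sublink $M$ with a set of coordinate directions, this matches exactly the generators of N\'emethi's lattice complex, where a $q$-cube in $\mathbb{Z}^n\subset\mathbb{R}^n$ is a lattice point together with a $q$-element set of spanning directions. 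I would then show that the induced $E_1$-differential $d_1$, assembled from the first-order surgery maps along single unknot components, agrees termwise with the lattice homology differential; granting this, $E_2\cong\HFmComb(G)$. Convergence to $\HHFm(Y_G)$ follows from completeness of $\mathcal{C}$ together with boundedness of the filtration, since the cube has finite dimension $n$.

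The $\SpinC$-splitting is inherited from the decomposition of $\mathcal{C}$ over the Alexander multigradings modulo the framing lattice, which the surgery formula identifies with $\SpinC(Y_G)$; since every map respects this decomposition, so does the spectral sequence, and it matches the splitting of $\HFmComb(G)$ by characteristic covectors modulo twice the image of the intersection form. For the grading statements I would track the absolute $\mathbb{Q}$-grading carried by the surgery formula in a torsion $\SpinC$ structure, including the correction term built from the framing matrix, and compare its value on each lattice point with the Maslov grading of the lattice complex, which is a quadratic expression in the characteristic vector; in the non-torsion case neither side carries an absolute grading, so only relative agreement is claimed.

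The main obstacle I expect is the termwise identification of $d_1$ with the lattice differential. This requires a precise understanding of the first-order maps $\Phi$ for a linked collection of unknots, including the combinatorial coefficients and the $U$-powers they contribute, and a verification that the higher maps $\Phi^{\vec{M}}$ with $|M|\geq 2$ genuinely raise filtration level, so that they do not interfere with the $E_1$-page. Controlling these maps and reconciling the sign and weight conventions of the two theories is where the real work lies; once this identification is in place the remaining items follow by tracking gradings and the $\SpinC$-decomposition through the construction.
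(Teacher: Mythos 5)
Your outline follows the paper's proof essentially step for step: present $Y_G$ as surgery on the link of unknots associated to $G$, apply the Manolescu--Ozsv\'ath surgery formula, filter the surgery complex by the number of components of the sublink $M$, use the $L$-space property of large surgeries to identify the $d_0$-homology with the $\Field[[U]]$-module underlying $\CFmComb(G)$, and then match $d_1$ with the lattice differential. Two comments. First, a small one: the fact that each vertex complex $\Chain^-({\mathcal H}^{L-M},\psi^M(\s))$ is a single tower $\Field[[U]]$ does not follow merely from each component of $L$ being an unknot; it uses that large positive surgery on the link of a plumbing \emph{tree} is an $L$-space (\cite[Lemma~2.6]{OSzplum}), so the tree structure enters here and not only in the combinatorics.

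Second, and more substantially: the step you defer with ``granting this'' --- the termwise identification of $d_1$ with the lattice differential --- is precisely the technical heart of the paper (Proposition~\ref{prop:ssident}), and your proposal does not indicate how to carry it out; you correctly predict it is where the real work lies, but a direct computation of the first-order maps (which involve the holomorphic-curve destabilization equivalences $\theta^{\orM}_{\s}$) is not feasible. The paper circumvents this by an indirect argument worth internalizing: since source and target of each component $d_1^{\pm}$ are copies of $\Field[[U]]$ and the maps are nonzero (being cobordism maps between $L$-spaces, isomorphisms in large degrees), each is multiplication by some $U^{\alpha_v}$ resp.\ $U^{\beta_v}$; these exponents are manifestly independent of the surgery framing $\Lambda$ (Lemma~\ref{lem:HFIndependence}); the lattice exponents $a_v[K,E]$, $b_v[K,E]$ are likewise invariant under changing the framing at $v$ while compensating $K(v)$ (Lemma~\ref{lem:LatticeIndependence}); and for sufficiently negative framings both sides are explicitly computable and agree, by the sharp-cobordism computation for negative definite graphs (Lemma~\ref{lem:NegSurgeryIdent}, resting on \cite[Proposition~4.1]{Lspace}). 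Without some such argument --- or an equally effective direct computation of the edge maps --- the identification $E_2\cong\HFmComb(G)$, and hence the theorem, is not established. The remaining items (the $\SpinC$ splitting via $\HH(L)/H(L,\Lambda)$, the grading comparison anchored at the generators $[K,\emptyset]$ and propagated by the differential) are handled in your proposal essentially as in the paper.
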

\begin{rem}
The $E_{\infty}$ term of the above spectral sequence (as a sequence of
modules over $\Field [[U]]$) recovers ${\rm {\mathbf {HF}}} ^- (Y_G)$
only as a vector space over $\Field$. More information about the
$\Field[[U]]$-module structure can be obtained by applying an
analogous spectral sequence over $\Field[[U]]/U^{n}$, see
Theorem~\ref{thm:UnSpecialized} below, and also the proof of
Corollary~\ref{c:type2}.
\end{rem}

As an application, we derive the following result.  (For the defintion
of type-$n$ graphs, see Definition~\ref{def:type} in
Section~\ref{sec:second}. Negative definite type-$n$ graphs include
graphs with at most $n$ bad vertices.) See~\cite[Section~8]{s3csomok}
for special cases of this result.

\begin{cor} \label{c:type2} 
If a plumbing tree $G$ is of type-2 then the lattice homology of $G$
is isomorphic to the Heegaard Floer homology of the underlying
3-manifold $Y_G$.
\end{cor}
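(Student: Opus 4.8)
The plan is to show that, for a type-2 graph, the spectral sequence of Theorem~\ref{thm:mainss} collapses at its $E_2$-page, so that $\HFmComb(G)\cong E_2\cong E_\infty\cong\HHFm(Y_G)$.

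First I would recall the structure of lattice homology $\HFmComb(G)$ as set up in Section~\ref{sec:second}: besides the Maslov grading, it carries a second, non-negative grading coming from the cellular (cube) dimension of the underlying lattice complex, and $\HFmComb(G)$ splits as a direct sum over this grading. The key input is that for a negative definite type-2 graph (Definition~\ref{def:type}) this second grading takes only the two consecutive values $0$ and $1$; equivalently, the summands in cube-degree $\ge 2$ vanish. This is the analogue, for type-2 graphs, of N\'emethi's vanishing theorem for graphs with a bounded number of bad vertices, and it is exactly the feature that distinguishes the type-2 case from the general situation handled by the spectral sequence.

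Next I would track how the differentials interact with this second grading. The filtration producing the spectral sequence of Theorem~\ref{thm:mainss} induces, on the $E_2$-page, the cube grading of lattice homology, so the differential $d_r$ (for $r\ge 2$) changes the cube-grading by $\pm r$. Since $\HFmComb(G)$ is supported in the two consecutive cube-degrees $0$ and $1$ and $r\ge 2$, every such $d_r$ sends a supported degree into an unsupported one, and therefore vanishes. Hence $E_2=E_3=\dots=E_\infty$, and the first bullet of Theorem~\ref{thm:mainss} together with convergence yields an isomorphism $\HFmComb(G)\cong\HHFm(Y_G)$ of $\Field$-vector spaces, compatible with the $\SpinC$-splittings and with the (absolute or relative) Maslov gradings by the remaining bullets of that theorem.

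Finally, to upgrade this to an isomorphism of $\Field[[U]]$-modules --- the $E_\infty$-page records only the associated graded, hence a priori only a vector-space statement, as noted in the Remark following Theorem~\ref{thm:mainss} --- I would run the identical degeneration argument for the $U^n$-specialized spectral sequences of Theorem~\ref{thm:UnSpecialized}. The concentration in cube-degrees $0$ and $1$ is inherited by the $\Field[[U]]/U^n$ theories, so each of these spectral sequences also collapses at $E_2$; comparing the resulting isomorphisms over all $n$ and passing to the inverse limit recovers the full $\Field[[U]]$-module structure. The main obstacle is the second step: one must verify that the spectral sequence differential really does shift the cube-grading by the page index (so that support in merely two consecutive degrees forces collapse), which amounts to identifying the $E_2$-filtration of the surgery complex with the cube filtration of lattice homology. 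Granting this identification, the collapse and the corollary follow formally.
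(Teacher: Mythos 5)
Your overall strategy --- collapse of the spectral sequence of Theorem~\ref{thm:mainss} at $E_2$ via a vanishing theorem in the $\delta$-grading (your ``cube grading''), followed by the $U^n$-truncated spectral sequences to recover the $\Field[[U]]$-module structure --- is the same as the paper's. But there is a genuine gap in the vanishing input you invoke. For a type-$2$ tree the correct statement (Theorem~\ref{thm:type}) is that $\HFmComb_q(G)=0$ for $q>2$, so the $E_2$-page is supported in the \emph{three} consecutive $\delta$-degrees $0,1,2$, not the two degrees $0,1$ you assert. The sharper vanishing for $q\geq 2$ holds only for negative definite graphs (whereas the corollary concerns arbitrary type-$2$ trees), and --- crucially --- it fails for the truncated theories $\HFaComb^{[n]}(G)$ even in the negative definite case, as noted in the remark following Theorem~\ref{thm:TruncatedType}. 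Since those truncated spectral sequences are exactly what you need for the module-structure upgrade, your degeneration argument breaks precisely where you need it.

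With support in $\delta$-degrees $0,1,2$, the differential $d_2\colon E_2^{(2)}\to E_2^{(0)}$ (note that $d_r$ \emph{drops} the filtration by exactly $r$; it does not change it by $\pm r$) is not excluded by support considerations alone, so collapse at $E_2$ does not follow formally. The paper kills it with Proposition~\ref{prop:d2n}: the Maslov grading of $U^i\otimes[K,E]$ is congruent to $|E|$ modulo $2$, every page differential drops the Maslov grading by $1$, and ${\mathcal D}^-_{2n}$ drops $|E|$ by the even number $2n$; hence all even-page differentials vanish for parity reasons. This gives $E_2=E_3$, and only then does Theorem~\ref{thm:type} finish the job by forcing $d_i=0$ for $i\geq 3$. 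A second, smaller issue: ``passing to the inverse limit'' over $n$ does not by itself yield an $\Field[[U]]$-module isomorphism, since the vector-space isomorphisms produced by the collapsed truncated spectral sequences need not be compatible with the truncation maps; the paper instead uses the generating-function argument of Lemma~\ref{lem:juennek}, which shows that the dimensions of $H_*(C\otimes\Field[U]/U^n)$ for all $n$ determine the finitely generated Maslov-graded $\Field[[U]]$-module up to isomorphism.
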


The paper is organized as follows. In Section~\ref{sec:second} we fix
notations and describe some necessary definitions, while in
Section~\ref{sec:review} we recall the basic concepts of lattice
homology. Section~\ref{sec:ss} is devoted to the discussion of the
spectral sequence, and finally in Section~\ref{sec:appl} we prove
Corollary~\ref{c:type2}. In this proof we use the surgery exact
sequence of Greene and N\'emethi \cite{Josh, latticetriangle}. For
completeness, in an Appendix we include a proof of this result adapted
to the conventions used throughout our paper.

{\bf Acknowledgements}: PSO was supported by NSF grant number
DMS-0804121.  AS was supported by OTKA NK81203, by the ERC Grant
LDTBud and by the \emph{Lend\"ulet program}. ZSz was supported by NSF
grants number DMS-0603940, DMS-0704053, DMS-1006006.

\section{Background}
\label{sec:second}

Suppose that $\Gamma$ is a tree on the vertex set $V=\Vertices (\Gamma
) =\{ v_1, \ldots , v_n\}$, while $G$ is the same graph together with
an integer $m_v\in \Z $ (a \emph{framing}) attached to each vertex
$v$ of $\Gamma$. Let $M_G$ denote the associated incidence matrix
(with framings in the diagonal). The plumbing 4-manifold defined by
$G$ (when we plumb disk bundles over spheres according to $G$) will be
denoted by $X_G$, and its boundary 3-manifold is $Y_G$. It is not hard
to see that $M_G$ is the intersection matrix of the 4-manifold $X_G$
in the basis $\{ E_1, \ldots ,E_n \} \subset H_2 (X_G; \Z )$ where
$E_i$ corresponds to the vertex $v_i$ ($i=1, \ldots , n$). Let $d_v$
denote the number of neighbours of a vertex $v$ in the tree $G$;
this quantity is sometimes called the degree (or valency) of the
vertex $v_i$.  Although lattice homology can be defined for graphs
containing cycles, in the present work we will restrict our attention
to trees and forests (disjoint unions of trees).

\begin{defn}\label{def:type}
\begin{itemize}
\item 
Suppose that $G$ is a negative definite plumbing tree (that is, the
matrix $M_G$ is negative definite). According to \cite{Artin} there is
a class $Z=\sum _i n_i E_i$ with $n_i\geq 0$ integers and $Z\neq 0$
which satisfies $Z\cdot E_i\leq 0$ for all $i$, and for any other
class $Z'=\sum _i n_i'E_i$ with these properties $n_i\leq n_i'$ holds
for all $i$.  The plumbing tree $G$ is called \emph{rational} if for
$Z=\sum _i n_iE_i$ we have
\[
(\sum _i n_i E_i)^2=2\sum _i n_i+\sum _i n_iE_i^2 -2.
\]
(This condition is equivalent to requiring that the geometric genus
$p(Z)=\frac{1}{2}(Z^2+K\cdot Z)+1$ of the class $Z$ vanishes.)
\item The vertex $v$ is a \emph{bad vertex} of $G$ if $d_v+m_v>0$,
  i.e. the valency of the vertex is more than the negative of its
  framing.
\item The plumbing tree $G$ is of \emph{type-$k$} if it has $k$ vertices $\{
  v_{i_1}, \ldots ,v_{i_k}\}$ on which we can change the framings $\{
  m_{i_1}, \ldots , m_{i_k}\}$ in such a way that the result is
  rational.
\end{itemize}
\end{defn}

\begin{rem}
 The above definition differs from the definition of
 N\'emethi~\cite{nemethi-ar}: we use the term {\em bad vertices} as it
 was used in \cite{OSzplum}. For negative definite trees, the notion
 of almost-rational coincides with type-1. If a negative definite tree
 $G$ has $k$ bad vertices then it is of type-$k$. The converse is
 false, cf. the example of Figure~\ref{fig:pelda}.
\end{rem}
\begin{figure}[ht]
\begin{center}
\epsfig{file=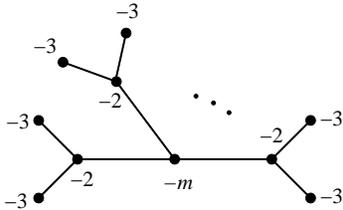, height=2.8cm}
\end{center}
\caption{{\bf The plumbing diagram of the figure has at least $n$ bad
    vertices (where $n$ is the valency of the central $(-m)$-framed
    vertex) and it is either type-1 or rational (depending on the
    actual value of $m$).}  Notice that all the $(-2)$-framed vertices
  are bad vertices (in the sense of Definition~\ref{def:type}). On the
  other hand, for $m$ sufficiently negative the graph is rational: let
  $K$ be the vector which is the sum of all $-3$-spheres, twice the
  $-2$ spheres, and once the central vertex. It follows that for any
  value of $m$ the graph is of type-1.}
\label{fig:pelda}
\end{figure}

Recall that a plumbing tree also provides a surgery diagram for the
3-manifold it represents: replace each vertex of the diagram with an
unknot, and arrange them so that two unknots link if and only if the
corresponding vertices are connected by an edge. The framings of the
unknots are given by the integers attached to the vertices of the
plumbing graph. Notice that (viewing the resulting framed link
$L=(L_1, \ldots , L_{\ell })$ as a Kirby diagram) this procedure actually
gives the 4-manifold $X_G$ with the given 3-dimensional boundary
$Y_G$. In addition, if $L', L''$ are two sublinks of the resulting link
$L$ in such a way that $L'\subset L''$, then this surgery theoretic
approach also provides a cobordism associated to the pair: attach the
4-dimensional 2-handles to $Y_{L'}$ along the components of $L''-L'$
with the framings specified by $G$.

For later reference, let $\sigma (G)$ denote the signature of the
intersection matrix $M_G$ (or equivalently, the 4-manifold $X_G$), and
define $\chi (G)$ as the cardinality $\vert V \vert $ of its vertex
set. Notice that since $G$ is a tree, $\chi (G)$ is equal to the
Euler characteristic of $X_G$ minus 1.

\section{Review of lattice homology}
\label{sec:review}
For the sake of completeness we review the basic notions of lattice
homology.  This notion was introduced by N\'emethi \cite{lattice} (see
also \cite{properties-lattice, s3csomok}).  The current presentation
is similar to the one discussed in \cite{latticeknot}, with the
difference that now we consider the completed version of the theory,
cf. Remark~\ref{rem:conv}.  Let $G$ be a given plumbing
tree/forest. Recall that $G$ is  specified by a graph $\Gamma$, together with a
map $m$ from the vertices $\Vertices(G)$ to $\Z$, and  the integer
$m(v)=m_v$ is called the {\em framing} of $v$.

Next we recall the definition of the completed version of the lattice
homology group of $G$.  The group $\HFmComb (G)$ is computed as the
homology of the combinatorial chain complex $\CFmComb(G)$, which is a
module over the ring $\Field[[U]]$ of formal power series (where
$\Field\cong \Zmod{2}$). To define it, let $\Char(G)\subset H^2 (X_G;
\Z )$ denote the set of characteristic cohomology classes on the
4-manifold $X_G$; i.e., it is the subset of those $K\in H^2(X_G)$
which have the property that
$$K\cdot c \equiv c\cdot c$$
for all $c\in H_2(X_G;\Z)$.
Let ${\mathbb {P}}(V)$ be the power set of $V=\Vertices (G)$,
so that $E\in {\mathbb {P}}(V)$ simply means that $E\subset V$.
Now, the $\Field[[U]]$-module  underlying $\CFmComb(G)$ 
is the  direct product
\begin{equation}\label{eq:prod} 
{{\CFmComb}} (G) = \Pi _{[K,E]\in \Char (G)\times
  {\mathbb {P}}(V)}\Field [[U]]\langle [K,E]\rangle.
\end{equation}

$\CFmComb(G)$ naturally admits an integral grading, called the
\emph{$\delta$-grading}.  The $\delta$-grading of an element $U^i
\otimes [K,E]$ is given by the cardinality $\vert E\vert $ of the
elements in $E$.  This grading naturally descends to a
$\Zmod{2}$-grading by considering only the parity of $\vert E \vert$.

We define the boundary map $\partial \colon \CFmComb (G)\to \CFmComb
(G)$ as follows.  Given a subset $I\subset E$, we define the
$G$-weight $f([K,I])\in \Z $ of the pair $[K,I]$ by the formula
\begin{equation}\label{eq:gweight}
2f([K,I])= \left(\sum_{v\in I}
  K(v)\right) + \left(\sum_{v\in I} v\right) \cdot \left(\sum_{v\in I}
  v\right).
\end{equation}
Moreover, for a pair $[K,E]$, we define the \emph{minimal $G$-weight}
$g([K,E])$ by the formula $g([K,E])=\min \{ f([K,I]) \mid {I\subset E}
\}$.  Next, for the vertex $v\in E\subset V$ consider the quantities
\[
A_v([K,E])=g([K,E-v]) 
\]
and 
\[
B_v([K,E])=\min \{ f([K,I]) \mid {v\in I\subset E} \}
=\left(\frac{K(v)+v\cdot v}{2}\right) + g([K+2v^*,E-v]),
\]
where $v^*$ denotes the Poincar\'e dual of the vertex $v$ (when $v$ is
regarded as an element of the second homology $H_2 (X_G, Y_G; \Z )$ of
the plumbing 4-manifold).  It follows trivially from the definition
that $\min \{ A_v([K,E]), B_v([K,E])\} =g([K,E])$. Let
\begin{eqnarray*}     a_v[K,E]=A_v([K,E])-g([K,E]) 
&{\text{and}} & b_v[K,E]=B_v([K,E])-g([K,E]).           
\end{eqnarray*}
(Now we have that $\min \{ a_v[K,E] , b_v[K,E] \} =0$.)  We define the
boundary map on $\CFmComb (G)$ by the formula
\begin{equation}\label{eq:boundary}
{\partial [K,E]} = \sum_{v\in E} U^{a_v[K,E]}\otimes [K,E-v] + \sum_{v\in E}
U^{b_v[K,E]}\otimes [K+2v^*,E-v]
\end{equation}
on $[K,E]$ and extend it to $\CFmComb (G)$ $U$-equivariantly and linearly.  
It is obvious that the boundary map drops the $\delta$-grading by one. A simple
calculation (cf. \cite{latticeknot}) shows that
\begin{lem}
The pair $(\CFmComb (G), \partial ) $ is a chain complex, that is,
$\partial ^2=0$. \qed
\end{lem}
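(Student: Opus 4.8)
The plan is to expand $\partial^2[K,E]$ on a single generator and show that its contributions cancel in pairs over $\Field\cong\Zmod{2}$. Since $\partial$ drops the cardinality of $E$ by one, every term of $\partial^2[K,E]$ arises by deleting two \emph{distinct} vertices $v,w\in E$, so I would organize the computation by unordered pairs $\{v,w\}$. At each of the two deletions the boundary either leaves $K$ fixed (an ``$a$-term'') or replaces it by $K+2v^*$ (a ``$b$-term''); hence for a fixed pair exactly four target generators can occur,
\[
[K,E-v-w],\quad [K+2v^*,E-v-w],\quad [K+2w^*,E-v-w],\quad [K+2v^*+2w^*,E-v-w].
\]
Because the classes $v^*$ form a basis of $H^2(X_G;\Z)$ (Lefschetz duality), these four classes are distinct and the two shifts are nonzero; moreover the underlying set $E-v-w$ recovers $\{v,w\}$. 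Thus no cancellation can occur between different pairs or different target types, and it suffices to check that for each pair $\{v,w\}$ and each of the four targets the two contributing orderings ($v$ first versus $w$ first) produce the \emph{same} power of $U$.

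The cancellation is driven by two elementary identities. From the definitions, $a_v[K,E]=g([K,E-v])-g([K,E])$, while the formula for $B_v$ gives $b_v[K,E]=\tfrac{K(v)+v\cdot v}{2}+g([K+2v^*,E-v])-g([K,E])$. The only substantive input beyond these is how a linear term transforms under a previous shift, namely $(K+2v^*)(w)=K(w)+2(v\cdot w)$, combined with the symmetry $v\cdot w=w\cdot v$ of the pairing $M_G$.

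Then I would accumulate the exponents along each ordering and observe that, after telescoping, the total power of $U$ depends only on the target. For $[K,E-v-w]$ both orderings yield $g([K,E-v-w])-g([K,E])$, the intermediate $g$-value cancelling. For $[K+2w^*,E-v-w]$ both yield $\tfrac{K(w)+w\cdot w}{2}+g([K+2w^*,E-v-w])-g([K,E])$ (and symmetrically for $[K+2v^*,E-v-w]$). For the doubly shifted target $[K+2v^*+2w^*,E-v-w]$ both orderings yield $\tfrac{K(v)+v\cdot v+K(w)+w\cdot w}{2}+v\cdot w+g([K+2v^*+2w^*,E-v-w])-g([K,E])$. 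In every case the two terms carry equal powers of $U$, so they cancel modulo $2$ and $\partial^2=0$.

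The computation is routine once arranged this way; the one place where care is needed---and the natural place for an error to hide---is the $b$-then-$b$ case producing the doubly shifted target. There one must check that the symmetric cross term $v\cdot w$ emerges identically whether it is generated at the $v$-step or the $w$-step, which is exactly the role of the identity $(K+2v^*)(w)=K(w)+2(v\cdot w)$ and the symmetry of the intersection form. Every other case telescopes formally.
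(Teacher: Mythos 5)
Your computation is correct and is exactly the ``simple calculation'' that the paper alludes to but does not write out (it cites \cite{latticeknot} and states the lemma with no proof): organize $\partial^2[K,E]$ by unordered pairs $\{v,w\}$ and by the four target characteristic vectors, and check that the two orderings contribute the same $U$-power, the only nontrivial point being the symmetric cross term $v\cdot w$ in the $b$-then-$b$ case. One remark is inaccurate, though harmlessly so: the images of the classes $v^*$ in $H^2(X_G;\Z)$ form a basis only when $M_G$ is nondegenerate, so the four targets $K$, $K+2v^*$, $K+2w^*$, $K+2v^*+2w^*$ need not be distinct (e.g.\ for a single $0$-framed vertex one has $K+2v^*=K$, as in the paper's second example, where $\partial[2n,v]=(1+U^n)[2n]$). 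This does not affect your argument, since each of the four ordering-pairs already sums to zero over $\Field$ on its own, so no appeal to distinctness of the targets is needed.
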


\begin{defn}
  The homology $H_*(\CFmComb (G), \partial )$ of the chain complex
  $(\CFmComb (G), \partial )$ is the \emph{lattice homology} $\HFmComb
  (G)$ of the plumbing graph $G$.
\end{defn}

Lattice homology is the homology of an infinite direct
product. Nonetheless, it enjoys the following finiteness property:

\begin{prop}
  \label{prop:FGlattice}
  The lattice homology group $\HFmComb(G)$ is a finitely generated
  $\Field[[U]]$ module.
\end{prop}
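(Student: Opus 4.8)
The plan is to show that the infinite complex $\CFmComb(G)$ is chain homotopy equivalent, over $\Field[[U]]$, to a \emph{finitely generated} complex; since $\Field[[U]]$ is Noetherian this immediately yields that $\HFmComb(G)=H_*(\CFmComb(G),\partial)$ is a subquotient of a finitely generated module, hence finitely generated. The reduction is carried out by repeated Gaussian elimination (the cancellation lemma of homological perturbation theory): whenever the matrix entry of $\partial$ between two rank-one free summands $[K,E]$ and $[K',E-v]$ is a \emph{unit} of $\Field[[U]]$, the two summands may be cancelled at the cost of modifying the remaining differentials. Out of $[K,E]$ along a vertex $v$ the two potential entries are $U^{a_v[K,E]}$, landing on $[K,E-v]$, and $U^{b_v[K,E]}$, landing on $[K+2v^*,E-v]$; since $\min\{a_v[K,E],b_v[K,E]\}=0$, at least one of these is a unit, the only exception being the case $a_v=b_v=0$ with $K+2v^*=K$, where the two contributions land on the same generator and cancel in characteristic $2$.

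First I would record the splitting of the complex along $\SpinC$ structures. Because the differential alters $K$ only to $K+2v^*$, we have $\CFmComb(G)=\prod_{\s}\CFmComb(G,\s)$, and since taking homology commutes with direct products, $\HFmComb(G)=\prod_{\s}\HFmComb(G,\s)$. It therefore suffices to prove that each factor $\HFmComb(G,\s)$ is finitely generated and that $\HFmComb(G,\s)=0$ for all but finitely many $\s\in\SpinC(Y_G)$; the latter is automatic when $Y_G$ is a rational homology sphere, since then $\SpinC(Y_G)$ is finite.

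The engine for both claims is the weight function $g$. Writing $K=K_0+2M_G\ell$ inside a fixed sector, the quantity $f([K,E])$ is, up to an additive constant, a quadratic function of the lattice variable $\ell$ whose quadratic part is governed by $M_G$. Consequently, away from a bounded region of the lattice one of $a_v,b_v$ vanishes in a coherent way (organized by the gradient of $g$, i.e.\ an acyclic matching in the sense of discrete Morse theory), so a cancelling unit is always available there; the ``critical'' generators, for which no such cancellation can be performed, form a finite set. Cancelling all the rest leaves a finite reduced complex computing $\HFmComb(G,\s)$, which proves finite generation of that factor. The same quadratic-growth estimate, now applied across sectors, shows that only finitely many $\s$ admit any critical generator at all, the remaining sectors being acyclic; finiteness of the critical set and of the number of surviving sectors both reduce to the properness and boundedness below of the quadratic weight function modulo the radical of $M_G$.

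The two points requiring real care, and where I expect the main difficulty, are the following. First, one must justify the \emph{infinite} sequence of cancellations: that the successive modifications of $\partial$ converge in the direct product and assemble into an honest chain homotopy equivalence to a finitely generated complex. This rests on the completeness of $\Field[[U]]$ together with the observation that cancellations performed ``farther out'' in the lattice involve ever higher powers of $U$, so that the relevant infinite composition of elementary moves converges $U$-adically. Second, one must make precise the quantitative claim that the critical set is finite and that only finitely many sectors survive — that is, the analysis of the interaction between the quadratic part $M_G$ of the weight function and the linear term determined by $\s$. This is straightforward when $M_G$ is nondegenerate (so $Y_G$ is a rational homology sphere and $g$ is proper), but the degenerate case $b_1(Y_G)>0$, where $\SpinC(Y_G)$ is infinite and $g$ is only proper modulo the null directions of $M_G$, is the delicate part of the argument.
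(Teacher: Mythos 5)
Your route is genuinely different from the paper's, and it has a gap at its central step. The paper's proof is a three-line induction on the number of bad vertices: the long exact sequence of Corollary~\ref{cor:LongExactLattice} relates $\HFmComb(G_{-n})$, $\HFmComb(G_{-n+1})$ and $\HFmComb(G-v)$, so over the Noetherian ring $\Field[[U]]$ finite generation propagates from the two outer terms to the middle one, the base case (graphs with no bad vertices) being quoted from \cite{lattice}. This sidesteps entirely the convergence and counting issues that your cancellation scheme must confront.

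The gap in your argument is the finiteness claim itself: you assert that the critical generators form a finite set, and that only finitely many $\SpinC$ sectors survive, because the quadratic weight function is ``proper and bounded below modulo the radical of $M_G$.'' That mechanism is simply not available for general plumbing trees, and the Proposition is stated for arbitrary trees, not just negative definite ones. The paper's own first example in \ref{ex:peldak} (a single vertex with framing $+1$) already defeats it: there $M_G=(+1)$ is nondegenerate with trivial radical, yet $2f([2n+1,\{v\}])=2n+2$ is unbounded below, so $g$ is not proper, and the finitely generated answer $\HFmComb(G)\cong\Field[[U]]$ is carried by the genuinely infinite cycle $\sum_n U^{\epsilon(n)}[2n+1,v]$ rather than by a finite set of critical cells of $g$. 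In such indefinite (or positive) cases finite generation is produced by the $U$-adic completion, not by boundedness of the weight; to salvage your approach you would have to choose the acyclic matching so that the $U$-powers along every infinite zig-zag tend to infinity, prove that the resulting infinite perturbation converges in the direct product (this constrains which of the two available unit entries you may cancel at each generator, and is exactly the first ``point requiring care'' you flag but do not resolve), and then deduce finiteness of the surviving complex from that convergence. None of this is supplied, and the heuristic you offer in its place is false in the simplest non-negative-definite example.
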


\begin{proof}
  This can be easily seen by induction on the number of bad vertices
  in $G$, using the long exact sequence in lattice
  homology (\cite{Josh,latticetriangle}, see also
  Corollary~\ref{cor:LongExactLattice}), and using the
  result for graphs with no bad vertices as in~\cite{lattice}.
\end{proof}

\begin{rem} 
  A number of further variants can be introduced along the same lines:
  using the coefficient ring $\Field [U^{-1}, U]]$ (the field of
    fractions for the ring of formal power series in $U$)
    we get
      $\CFinfComb (G)$ and the corresponding homology theory
      $\HFinfComb (G)$. Notice that $\CFmComb (G)$ is a subcomplex of
      $\CFinfComb (G)$, hence we can consider the quotient complex
      $\CFpComb (G)$, whose homology is $\HFpComb (G)$. Setting $U=0$ in
      $\CFmComb (G)$ we get the homology theory $\HFaComb (G)$, over
      the base ring $\Field$.  More
      generally, by setting $U^n=0$ ($n\in \N$) we get the version
      $\HFaComb ^{[n]}(G)$.
\end{rem}

\begin{rem}\label{rem:conv}
  The conventional definition of lattice homology considers direct sum
  as opposed to direct product in the definition of $\CFmComb (G)$
  given in \eqref{eq:prod}. Also, the usual coefficient ring is the
  polynomial ring $\Field [U]$ rather than $\Field [[U]]$.  With the
  changes in the present definition, in fact, we consider a completed
  version of the theory.  If $G$ is negative definite, then the usual
  definition (given for example, in \cite{lattice, latticeknot}) and
  the one given above determine each other. This principle is not true
  in general, cf. the second example in \ref{ex:peldak}.  We found the
  description adapted in this paper to be in accord with the
  corresponding Heegaard Floer homology theories.
\end{rem}

The relation
\[
K\sim K' \qquad {\mbox {if and only if }} \qquad K-K'\in 2H^2(X_G,
Y_G; \Z )
\]
splits the generators into equivalence classes: $U^i\otimes [K,E]$ and
$U^j\otimes [K',E']$ are equivalent if $K\sim K'$.  This relation then
splits the the chain complex $\CFmComb (G)$ as well, and the
definition of the boundary map in \eqref{eq:boundary} shows that the
boundary map respects this splitting. Since $G$ is a tree, the
4-manifold $X_G$ is simply connected, an hence an element of $K\in
\Char (G)$ specifies a $\SpinC$ structure $\t _K$ on $X_G$, therefore
(by restricting $\t _K$ to the boundary $Y_G$) induces a $\SpinC$
structure $\s _K$ on $Y_G$. It is not hard to see that $K\sim K'$
holds if and only if $\s _K$ and $\s _{K'}$ are isomorphic $\SpinC$
structures on $Y_G$. Hence both the chain complexes and the homologies
defined above split according to the $\SpinC$ structures of $Y_G$.
Recall that $\CFmComb (G)$ admits a $\delta$-grading (given for the
generator $[K,E]$ by $\vert E \vert$), splitting the homologies
further:
\begin{defn}
  \label{def:SpinCSplitting}
  For $i\geq 0$ define $\HFmComb _i (G, \s )$ as the subgroup of
  $\HFmComb (G)$ spanned by those pairs $[K,E]$ for which $\s _K=\s$
  and $\vert E\vert =i$.
\end{defn}

Lattice homology has a further grading, the {\em Maslov grading}. This
structure is simplest to describe in the case where the underlying
$\SpinC$ structure is torsion (i.e. the first Chern class of that
$\SpinC$ structure is a torsion cohomology class). We give the grading
in that case first.

Suppose that the $\SpinC$ structure $\s _K$ associated to a generator
$U^i\otimes [K,E]$ is torsion.  In this case define the \emph{Maslov
  grading} $\grrr (U^i \otimes [K,E])$ of a generator $U^i\otimes
[K,E]$ of $\CFmComb (G)$ as
\begin{equation}\label{eq:maslov}
\grrr (U^i\otimes [K,E])=-2i +2g(K,E)+\vert E\vert +\frac{1}{4}(K^2
-3\sigma (G)-2\chi (G)).
\end{equation}
(Recall that $K^2$ is defined as the square of $nK$ divided by $n^2$,
where $nK\in H^2 (X_G, Y_G; \Z)$ and therefore it admits a cup
square. As a result we expect $\grrr (U^i\otimes [K,E])$ to be a rational
number rather than an integer.)

\begin{lem} 
\label{lem:DiffDropsOne}
(cf. \cite{latticeknot})
The boundary map drops the Maslov grading $\grrr$ by one. 
\end{lem}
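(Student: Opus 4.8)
The plan is to verify directly that each of the two families of terms appearing in the boundary map \eqref{eq:boundary} lowers the Maslov grading $\grrr$ of \eqref{eq:maslov} by exactly one. Since $\partial$ is $U$-equivariant and $\grrr(U\otimes x)=\grrr(x)-2$, it suffices to carry out the computation on a generator of the form $[K,E]=U^0\otimes[K,E]$, comparing $\grrr([K,E])$ with the grading of each summand of $\partial[K,E]$.

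First I would treat the terms $U^{a_v[K,E]}\otimes[K,E-v]$, in which the characteristic vector $K$, and hence the quantity $K^2-3\sigma(G)-2\chi(G)$, is unchanged. Recalling that $a_v[K,E]=A_v([K,E])-g([K,E])=g([K,E-v])-g([K,E])$, the contribution $-2a_v[K,E]+2g([K,E-v])$ to \eqref{eq:maslov} collapses to $2g([K,E])$, while $\vert E-v\vert=\vert E\vert-1$. Hence the grading of this term equals $\grrr([K,E])-1$, as desired.

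The main work is in the terms $U^{b_v[K,E]}\otimes[K+2v^*,E-v]$, where the characteristic vector changes and so the quadratic term shifts. Using $b_v[K,E]=B_v([K,E])-g([K,E])$ together with $B_v([K,E])=\tfrac{K(v)+v\cdot v}{2}+g([K+2v^*,E-v])$, the $g$-terms again telescope and the grading difference reduces to
\[
-(K(v)+v\cdot v)-1+\tfrac14\bigl((K+2v^*)^2-K^2\bigr).
\]
Expanding $(K+2v^*)^2-K^2=4\,K\cdot v^*+4\,(v^*)^2$, the assertion that $\grrr$ drops by one becomes the identity $K\cdot v^*+(v^*)^2=K(v)+v\cdot v$. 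I would close the argument by invoking Poincar\'e duality on $X_G$: since $v^*$ is the Poincar\'e dual of the sphere $E_v$, cup product with $v^*$ computes evaluation on $E_v$, giving $K\cdot v^*=\langle K,E_v\rangle=K(v)$ and $(v^*)^2=E_v\cdot E_v=v\cdot v$.

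The only real obstacle is bookkeeping in this second family: one must keep straight that the ``$v\cdot v$'' in the definition of $B_v$ is the self-intersection $E_v\cdot E_v$ in $H_2(X_G;\Z)$, while $v^*$ is the associated class in $H^2(X_G)$, and that the two Poincar\'e-duality identities above hold with these conventions; with them in hand, the quadratic shift exactly cancels the linear term $K(v)+v\cdot v$. The same difference computation makes sense verbatim when $\s_K$ is non-torsion, so it simultaneously shows that $\partial$ lowers the relative Maslov grading by one in that case.
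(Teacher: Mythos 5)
Your proposal is correct and follows the same route as the paper: a direct term-by-term comparison of the Maslov gradings of $[K,E]$ and of each summand of $\partial[K,E]$, using the definitions of $a_v$ and $B_v$ to telescope the $g$-terms. The only difference is that the paper dismisses the second family of terms with ``similarly,'' whereas you carry out the quadratic bookkeeping $(K+2v^*)^2-K^2=4K\cdot v^*+4(v^*)^2$ and the Poincar\'e-duality identities explicitly; that computation is exactly the one the paper is implicitly relying on.
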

\begin{proof}
  Proceed separately for the two types of components of the boundary
  map.  After obvious simplifications, according to the definition of
  $a_v[K,E]$ we have that
\[
\grrr (U^i\otimes [K,E])-\grrr (U^i \cdot U^{a_v[K,e]}\otimes [K,E-v])=
\]
\[
=2g([K,E]) +\vert E\vert + 2a_v[K,E]-2g([K,E-v])-\vert E-v\vert =1.
\]
Similarly, 
\[
\grrr (U^i\otimes [K,E])-\grrr (U^i \cdot U^{b_v[K,e]} \otimes
[K+2v^*,E-v])=1
\]
follows from the same simplifications and the definition of
$B_v([K,E])$.  
\end{proof}

We will find it convenient to use the following terminology:

\begin{defn}
  \label{def:MaslovGraded}
   A {\em Maslov graded chain complex} is a $\Q$-graded chain complex 
   over $\Field[[U]]$ with the property that 
   \begin{itemize}
     \item the differential drops grading by one and
     \item multiplication by $U$ drops grading by two.
     \end{itemize}
\end{defn}

Lemma~\ref{lem:DiffDropsOne} and Equation~\eqref{eq:maslov} together
say that for a torsion $\SpinC$ srtucture $\s$ the grading $\gr$ gives
$\CFmComb(G, \s )$ a Maslov grading, in the sense of
Definition~\ref{def:MaslovGraded}.

\begin{lem}
Suppose that $\s _K=\s _{K'}$ is a torsion $\SpinC$ structure. 
Then the difference 
\[
\grrr (U^i\otimes [K,E])-\grrr (U^j\otimes [K',E'])
\]
is an integer, and it is congruent $\mod 2$ to the difference $\vert
E\vert -\vert E'\vert$.
\end{lem}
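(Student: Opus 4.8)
The plan is to read the difference straight off the grading formula \eqref{eq:maslov} and isolate the single term that could fail to be an even integer. Subtracting, the universal constant $-3\sigma(G)-2\chi(G)$ cancels, so
\[
\grrr(U^i\otimes[K,E])-\grrr(U^j\otimes[K',E'])
 = -2(i-j)+2\bigl(g([K,E])-g([K',E'])\bigr)+(\vert E\vert-\vert E'\vert)+\tfrac14\bigl(K^2-(K')^2\bigr).
\]
Here $-2(i-j)$ and $2\bigl(g([K,E])-g([K',E'])\bigr)$ are even integers: $i,j\in\Z$, and $g$ is integer-valued because each $G$-weight is, which in turn follows from \eqref{eq:gweight} together with the characteristic condition $K\cdot c\equiv c\cdot c\pmod 2$ applied to $c=\sum_{v\in I}v$. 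Hence the whole statement reduces to showing that $\tfrac14(K^2-(K')^2)$ is an \emph{even} integer; granting this, the difference is an integer congruent to $\vert E\vert-\vert E'\vert\pmod 2$, as claimed.

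Next I would feed in the hypothesis $\s_K=\s_{K'}$. By the discussion preceding Definition~\ref{def:SpinCSplitting} this is equivalent to $K\sim K'$, i.e.\ $K=K'+2\xi$ for some integral relative class $\xi\in H^2(X_G,Y_G;\Z)$. Because $\s_K$ is torsion, a multiple of $K$ lifts to $H^2(X_G,Y_G;\Q)$, so the rational squares $K^2$ and $(K')^2$ are defined; expanding the $\Q$-valued intersection pairing with $K=K'+2\xi$ gives
\[
K^2-(K')^2 = 4\,\langle K'\cup\xi\rangle + 4\,\langle \xi\cup\xi\rangle .
\]
Writing $c\in H_2(X_G;\Z)$ for the Poincar\'e--Lefschetz dual $\PD(\xi)$, the two evaluations are the honest integers $K'\cdot c$ and $c\cdot c$ (this is where one uses that $\xi$ is \emph{integral} as a relative class). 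Therefore $\tfrac14(K^2-(K')^2)=K'\cdot c+c\cdot c$ is an integer.

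Finally, the characteristic property of $K'$, namely $K'\cdot c\equiv c\cdot c\pmod 2$ for every $c\in H_2(X_G;\Z)$, yields $K'\cdot c+c\cdot c\equiv 2\,(c\cdot c)\equiv 0\pmod 2$, so $\tfrac14(K^2-(K')^2)$ is indeed an even integer and the lemma follows. I expect the only real obstacle to be the bookkeeping around the rational cup square: one must first make sense of $K^2$ and $(K')^2$ as rational numbers using that $\s_K$ is torsion, and then correctly identify the cross term $\langle K'\cup\xi\rangle$ and the self-pairing $\langle\xi\cup\xi\rangle$ with the integral intersection numbers $K'\cdot c$ and $c\cdot c$ via Lefschetz duality. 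Once that identification is nailed down, the characteristic condition does all the remaining work and the parity count is immediate.
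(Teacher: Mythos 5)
Your proposal is correct and follows essentially the same route as the paper: cancel the $\sigma(G)$ and $\chi(G)$ terms, note the $U$-exponent and $g$-function contributions are even, and reduce to showing $\tfrac14\bigl(K^2-(K')^2\bigr)$ is even by writing $K-K'=2\xi$ with $\xi$ an integral relative class and invoking the characteristic condition on the resulting expression $K'\cdot c+c\cdot c$. The only difference is that you spell out the Poincar\'e--Lefschetz duality bookkeeping and the integrality of $f$ (hence $g$), which the paper leaves implicit.
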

\begin{proof}
  In the difference the terms coming from $\sigma (G)$ and $\chi (G)$
  cancel, and the ones originating from the $U$-exponents or from the
  $g$-function are obviously even. We claim that the difference
  $\frac{1}{4}(K^2-(K')^2)$ is also even.
  Since $\s _K =\s _{K'}$, we have that $K'=K+2x$ for some vector
  $x\in H^2 (X_G, Y_G; \Z )$, therefore
\[
\frac{1}{4}(K^2-(K')^2)=x\cdot (K+x),
\]
which is even since $K$ is characteristic. (Note that since $x$ is in
the relative cohomology, the above product always makes sense.)  The
only remaing terms are $\vert E\vert -\vert E'\vert$, verifying the
statement.
\end{proof}

We turn now to the non-torsion case.  In this case the term $K^2$ is
not defined, since $nK$ is not in $H^2 (X_G, Y_G; \Z )$ for any
non-zero $n$. Nevertheless, if $\s _K = \s _{K'}$, we can still
consider the difference $K^2-(K')^2$ by writing it as $(K-K')\cdot
(K+K')$. The assumption $\s_K=\s _{K'}$ then ensures that $K-K'$
admits a lift from $H^2(X_G ; \Z )$ to $H^2 (X_G, Y_G; \Z )$, hence
the above product makes sense.  This provides a possibility of
defining a relative Maslov grading.  Notice, however, that the lift of
$K-K'$ is not unique in general: by the long exact sequence of the
pair $(X_G, Y_G)$ the ambiguity for choosing such a lift lies in the
group $H^1 (Y_G; \Z )\cong H_2 (Y_G ;\Z )$. Suppose that $x$ is a lift
of $\frac{1}{2}(K-K')$ and $y\in H_2 (Y_G; \Z )$. Then the difference
we get for $K^2-(K')^2$ by using $x$ or $x+y$ can be easily computed
to be equal to $K\vert _{Y_G}(y)$. (If the restriction $K\vert _{Y_G}$
is torsion, then this evaulation is obviously zero, and we are in the
previous situation of having absolute Maslov gradings in torsion
$\SpinC$ structures.) Therefore if $d$ denotes the divisibility of
$K\vert _{Y_G}$ (that is, this cohomology class equals $d$-times a
primitive one), then the value $K^2-(K')^2$ is well-defined up to
$4d$, hence the relative Maslov grading is well-defined modulo $d$
only. (Notice that for a characteristic cohomology class $K$ the
divisibility $d$ of the restriction $K\vert _{Y_G}$ is always even.)
In summary, we have:

\begin{lem}
  Fix two generators $U^i\otimes [K,E]$ and $U^j\otimes [K',E']$
  and suppose that ${\mathbf s}_K={\mathbf s}_{K'}$ is a non-torsion $\SpinC$ structure over $Y_G$.
  Then, the {\em relative Maslov grading}
  $$\gr(U^i \otimes [K,E], U^j\otimes
  [K',E'])=-2(i-j)+2g(K,E)-2g(K,E')
  +|E|-|E'|+\frac{1}{4}(K^2-(K')^2),$$ gives a well-defined element of
  ${\mathbb Q}/{d}$, where $d$ denotes the divisibility of
  $c_1({\mathbf s}_K)$.  \qed
\end{lem}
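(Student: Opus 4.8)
The plan is to reduce the assertion to a statement about a single term of the displayed formula. The quantities $-2(i-j)$, $2g(K,E)-2g(K,E')$ and $|E|-|E'|$ are integers that involve no auxiliary choice, so the only term whose meaning and well-definedness require justification is $\frac14(K^2-(K')^2)$; all choice-dependence of the expression is concentrated there. I would therefore devote the whole argument to understanding this one term, and then simply observe that adding fixed integers to a quantity well defined in $\Q/d\Z$ keeps it well defined in $\Q/d\Z$.

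First I would make the term meaningful. Writing $K^2-(K')^2=(K-K')\cdot(K+K')$ and using that $\s_K=\s_{K'}$, the restriction $(K-K')|_{Y_G}$ vanishes, so by exactness of the long exact cohomology sequence of the pair $(X_G,Y_G)$ the class $K-K'$ lies in the image of $H^2(X_G,Y_G;\Z)\to H^2(X_G;\Z)$ and thus admits a relative lift $x$ (as in the discussion preceding the lemma, where $x$ is taken to be a lift of $\frac12(K-K')$). With $x$ fixed, $\frac14(K^2-(K')^2)$ is defined through the cup pairing $H^2(X_G,Y_G;\Z)\otimes H^2(X_G;\Z)\to\Z$ and is a well-defined rational number.

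Second I would measure its dependence on $x$. Two relative lifts differ by an element of $\ker\bigl(H^2(X_G,Y_G;\Z)\to H^2(X_G;\Z)\bigr)=\mathrm{im}\,\delta$, where $\delta$ is the connecting map out of $H^1(Y_G;\Z)$; under Poincar\'e--Lefschetz duality this is identified with the image of $i_\ast\colon H_2(Y_G;\Z)\to H_2(X_G;\Z)$, so a change of lift is recorded by a class $y\in H_2(Y_G;\Z)$. Replacing $x$ by $x+y$ changes $K^2-(K')^2$ by a fixed multiple of $\langle K|_{Y_G},y\rangle=\langle c_1(\s_K),y\rangle$, using the identity $\delta(w)\cup\beta=\delta(w\cup i^\ast\beta)$ together with $(K+K')|_{Y_G}=2c_1(\s_K)$. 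As $y$ ranges over $H_2(Y_G;\Z)$ these evaluations lie in $d\Z$, where $d$ is the divisibility of $c_1(\s_K)$ (this is the definition of divisibility applied to $c_1(\s_K)$ under the evaluation pairing $H^2(Y_G;\Z)\otimes H_2(Y_G;\Z)\to\Z$). Hence $K^2-(K')^2$ is well defined up to an integer multiple of $4d$, so $\frac14(K^2-(K')^2)$, and therefore $\gr(U^i\otimes[K,E],U^j\otimes[K',E'])$, is well defined in $\Q/d\Z$, as claimed.

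I expect the one genuinely delicate point to be the constant in the third step: tracking the factors of $2$ that arise from $(K+K')$ restricting to $2c_1(\s_K)$ and from lifting $\frac12(K-K')$ rather than $K-K'$. These factors govern whether the indeterminacy of $K^2-(K')^2$ is $4d$, and hence whether the grading lands in $\Q/d\Z$; the underlying structural fact, that all choice-dependence is funneled through the evaluation of $c_1(\s_K)$ against $H_2(Y_G;\Z)$, is robust. The non-torsion hypothesis enters precisely here: since $c_1(\s_K)$ is non-torsion it cannot itself be lifted to $H^2(X_G,Y_G;\Z)$, so only the difference $K-K'$ — and not $K$ individually — can be lifted, which is exactly why the grading is relative and defined only modulo $d$.
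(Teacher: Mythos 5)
Your proposal follows essentially the same route as the paper's own (rather terse) justification in the paragraph preceding the lemma: write $K^2-(K')^2=(K-K')\cdot(K+K')$, lift $\tfrac12(K-K')$ to $H^2(X_G,Y_G;\Z)$, observe via the long exact sequence of the pair that the lift is ambiguous by a class $y$ coming from $H^1(Y_G;\Z)\cong H_2(Y_G;\Z)$, and funnel the resulting change through the evaluation of $c_1(\s_K)$ on $y$. The one point you flag as delicate does work out exactly as needed --- replacing $x$ by $x+y$ changes $(2x)\cdot(K+K')$ by $2y\cdot(K+K')=2\langle (K+K')|_{Y_G},y\rangle=4\langle c_1(\s_K),y\rangle\in 4d\Z$ --- so the indeterminacy of $K^2-(K')^2$ is $4d$ and the grading is well defined in $\Q/d\Z$, matching the paper's conclusion (and in fact your bookkeeping is cleaner than the paper's, which states the change as $K|_{Y_G}(y)$ while tacitly using the factor of $4$).
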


The proof of Lemma~\ref{lem:DiffDropsOne} readily adapts to the
non-torsion case: in this case, the lattice complex is a relatively
$\Q/d$-graded Maslov-graded complex.

\begin{exas} \label{ex:peldak}
\begin{itemize}
\item Consider the example of the graph $G$ with a single vertex $v$,
  no edges and the decoration of the single vertex to be equal to
  $+1$.  Then a characteristic cohomology class $K$ can be identified
  with the odd number $K(v)$ it takes as a value on $v$.  The
  generators of $\CFmComb (G)$ are then $[2n+1, \{ v\} ]$ and
  $[2n+1]$. The boundary of $[2n+1]$ is 0, while
 \[ \partial [ 2n+1, \{ v\}]
  = \left\{\begin{array}{ll}
      [2n+1]  + U^{n+1} \otimes [2n+3] & {\text{if $n\geq -1$}} \\
      U^{-(n+1)}\otimes [2n+1]+[2n+3] & {\text{if $n< -1$}}. 
      \end{array}
    \right.\] The map $\partial$ is then obviously injective on the
    subspace given by the finite sums of elements of the form
    $[2n+1,v]$. By allowing infinite sums (as we did), the element
\[
\sum _{n=-\infty } ^{-1} U^{\frac{1}{2}(n+1)(n+2)}[2n+1,v] + 
\sum _{n=0}^{\infty}U^{\frac{1}{2}n(n+1)}[2n+1, v]
\]
generates ${{\HFmComb }}(G)$ over $\Field [[U]]$.  This shows that in
this case $\HFmComb (G)=\HFmComb _1 (G)=\Field [[U]]$.  A simple
calculation shows that this element has zero Maslov grading, in
accordance with the Heegaard Floer homological computation for the
plumbing manifold $Y_G$ given by $G$, which is diffeomorphic to $S^3$.

\item In the next example we assume that $G$ still has a single vertex
  $v$ (and no edges) and the framing of the single vertex is zero.
The underlying 3-manifold is now $S^1\times S^2$.
  The generators are of the form $[2n]$ and $[2n, v]$,
and two generators are in the same $\SpinC$ structure if and only if
the characteristic cohomology classes coincide. As always, 
 $\partial [2n]=0$. A simple calculation shows that 
\[
\partial [2n, v]=(1+U^n)[2n].
\]
Considering the theory over $\Field [U]$ (and allowing only finite
sums) the homology for the $\SpinC$ structure $n\neq 0$ is $\Field
[U]/(U^n)$, while for $n=0$ it is $\Field [U]\oplus \Field [U]$.
Working with the completed groups (and hence using the coefficient
ring $\Field [[U]]$), the term $(1+U^n)$ is invertible for $n\neq 0$
(and vanishes if $n=0$), hence according to the definition we adopted
in the present paper we have that $\HFmComb (G, \s _n)=0$ if $n\neq 0$
and $\HFmComb (G, \s _0)=\Field [[U]]\oplus \Field [[U]]$.  (A simple
computation shows that the Maslov gradings of the two generators are
$\frac{1}{2}$ and $-\frac{1}{2}$.)  Moreover, $\HFmComb _0(G, \s
_0)\cong \Field [[U]]$ and $\HFmComb _1(G, \s _0)\cong \Field [[U]]$.

This simple computation shows that for non-torsion $\SpinC$ structures
the completed theory (over the ring $\Field [[U]]$) loses some
information.  On the other hand, for torsion $\SpinC$ structures the
completed theory determines the one referred to in
Remark~\ref{rem:conv} (which is defined over $\Field [U]$).  We just
note here that the resulting homologies are again isomorphic to the
corresponding completed Heegaard Floer homology groups.
\end{itemize}
\end{exas}

\section{The spectral sequence}
\label{sec:ss}

Before turning to the proof of our main result, we need to recall some
definitions and constructions from \cite{ManOzs} (cf. also
\cite{OSzlinks}). Recall that the plumbing graph determines a link 
$L=(L_1, \ldots , L_\ell)$
in $S^3$: each vertex of the plumbing tree gives rise to an unknot and 
these unknots are linked if and only if the corresponding vertices
are connected in the graph by an edge.

\subsection{Constructions from link Floer homology}

Let $H$ denote the homology group $H_1(S^3-L; \Z )$. By fixing an
orientation on the component $L_i$, it gives rise to an oriented
meridian $\mu _i$, and these meridians generate $H$. Using these
meridians we can identify the group ring
$\Z [H]$ with the ring of Laurent polynomials on $\ell$ variables.
Define $\HH(L)$ as
\[
\{ \sum a_i \cdot [\mu _i]\mid a_i\in \Q, \quad {\mbox {and}} \quad
2a_i +\ell k (L_i, L-L_i)\in 2\Z \},
\]
where $\ell k (L_i, L-L_i)$ is the linking number of the component $L_i$
with the rest of the link. As it was discussed in \cite{ManOzs, OSzlinks},
the set $\HH(L)$ parametrizes the relative
$\SpinC$ structures on $S^3-L$.
 
Fix a multi-pointed Heegaard diagram ${\mathcal
  H}=(\Sigma,\alphas,\betas,\ws,\zs)$ representing the link $L$, as
in~\cite{OSzlinks}.  In this diagram $\ws=(w_1,\dots,w_{\ell})$ and
$\zs=(z_1,\dots,z_{\ell})$ are basepoints with the property that the
pair $w_i$ and $z_i$ represents the $i^{th}$ component of $L$. Recall
that the multi-diagram in fact specifies an orientation on the
link. When we wish to underscore this structure, we write an oriented
link as $\orL$.

Given the Heegaard diagram and a choice of $\s\in \HH(L)$, we define
the chain complex $\Chain^-({\mathcal H},\s)$ as follows.  Any
intersection point $\x \in \Ta \cap \Tb$ has a Maslov grading $M(\x)
\in \Z$ (since the link $L$ is in $S^3$) and an Alexander
multi-grading $A(\x)\in\HH(L)$, defined using the Heegaard
diagram. This Alexander multi-grading is specified (up to an overall
additive constant, i.e. by a vector), as follows. If $w_i$ and $z_i$
are the pair of basepoints belonging to the $i^{th}$ component of the
link, and $\phi\in\pi_2(\x,\y)$ is any homotopy class connecting $\x$
and $\y$, then the $i^{th}$ component $A_i(\x)$ of $A(\x)$ satisfies
\[
A_i(\x) - A_i(\y) = n_{z_i}(\phi)-n_{w_i}(\phi).
\]
In an integral homology sphere (and specifically in $S^3$) such $\phi$
always exists and the difference above is independent of the choice of
$\phi$.

Given $\s=(s_1,\dots,s_\ell)$ and $\phi$, we define the $\s$-modified
multiplicity of $\phi\in\pi_2(\x,\y)$ by the formulas:
\begin {eqnarray}
\label {eiz}
E^i_{\s}(\phi) &=&  \max \{ s_i-A_i(\x), 0\} - \max \{ s_i-A_i(\y), 0\} + 
n_{z_i}(\phi) \\
\label {eiw}
&=& \max \{ A_i(\x) -s_i, 0\} - \max\{ A_i(\y) -s_i,0\} + n_{w_i}(\phi).
\end {eqnarray}
This quantity has the following two properties:
\begin{itemize}
  \item $E^i_s(\phi)\geq 0$ if all the local multiplicities of $\phi$ are non-negative.
  \item
    If $\phi_1\in\pi_2(\p,\q)$ and $\phi_2\in\pi_2(\q,\rr)$, then for 
$\phi _1 * \phi _2 \in \pi _2 (\p, \rr )$ 
\[
E^i_s(\phi_1*\phi_2)=E^i_s(\phi_1)+E^i_s(\phi_2).
\]
\end{itemize}

Given $\s = (s_1, \dots, s_\ell) \in \HH(L)$, we define the
corresponding chain complex $\Chain^-({\mathcal H}, \s)$, which is a
free module over the algebra $\Ring = \Field[[U_1, \dots, U_\ell]]$
generated by $\Ta \cap \Tb$, and equipped with the differential:
\begin{equation}
  \label{def:DefD}
  \partial \x = \sum_{\y \in \Ta \cap \Tb} \sum_{\substack{\phi \in \pi_2(\x, \y)\\ \mu(\phi)=1 }} \# \left(\frac{{\mathcal M}(\phi)}{\R}\right) \cdot U_{1}^{E^1_{s_1}(\phi)} \cdots  U_{\ell}^{E^\ell_{s_\ell}(\phi)} \cdot \y. 
\end{equation}
Note that this complex also depends on the choice of a suitable almost
complex structure on the symmetric product. We suppress this almost
complex structure from the notation for simplicity.

According to~\cite{ManOzs}, the above complex is related to the
Heegaard Floer homology of the 3-manifold obtained as sufficiently
large surgeries on a link.  (See also~\cite{OSzint, RasmussenThesis}
for the analogues for knots.) More formally, let $\Lambda = (\Lambda
_1, \ldots , \Lambda _{\ell})\in \Z ^{\ell }$ be a vector of framings,
and let $Y_{\Lambda }(L)$ denote the 3-manifold we get by performing
$\Lambda _i$-surgery on $L_i$ for $i=1,\ldots , \ell$. Then, the
following holds:
\begin{thm}(\cite[Theorem~10.1]{ManOzs}) \label{thm:manozsfo} If
  $\Lambda $ is sufficiently large (that is, for all $i$ the
  coordinate $\Lambda _i\in \Z$ is sufficiently large) then the
  Heegaard Floer chain complex ${\mathbf {CF}}^-(Y_{\Lambda }(L), \s
  )$ is quasi-isomorphic to $\Chain^-({\mathcal H},\s)$. \qed
\end{thm}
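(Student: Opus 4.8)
The plan is to realize $Y_{\Lambda}(L)$ by an explicit Heegaard diagram built from $\mathcal H$, and then to match its Floer complex with $\Chain^-(\mathcal H,\s)$ through a holomorphic-disk count that becomes transparent once the framings are large. This is the link-theoretic generalization of the integer surgery formula for knots (\cite{OSzint, RasmussenThesis}), and I would follow that template closely.

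First I would construct a multi-pointed Heegaard diagram $(\Sigma,\alphas,\gammas,\ws)$ for $Y_{\Lambda}(L)$ that agrees with $\mathcal H$ away from $\ell$ small \emph{winding regions}, one placed between each pair of basepoints $w_i,z_i$. For each component I replace the $i$-th attaching datum by a surgery curve $\gamma_i$ representing the slope $\Lambda_i$; for large $\Lambda_i$ this curve winds $\Lambda_i$ times through the winding region, so that it separates $w_i$ from $z_i$ and meets the relevant $\alpha$-curve in a long string of new intersection points. I would then identify the generators: an intersection point of $\Ta$ with $\Tc$ is recorded by a point $\x\in\Ta\cap\Tb$ of the original diagram together with, for each $i$, an integer ``strand index'' running over the winding, and these strand indices parametrize the $\SpinC$ structures on $Y_{\Lambda}(L)$. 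Fixing $\s=(s_1,\dots,s_\ell)\in\HH(L)$ selects exactly one strand per component, putting the generators of ${\mathbf{CF}}^-(Y_{\Lambda}(L),\s)$ in bijection with $\Ta\cap\Tb$ and hence with those of $\Chain^-(\mathcal H,\s)$.

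The heart of the matter is the disk correspondence. I would show that, for $\Lambda$ large, every index-one holomorphic disk in the surgered diagram is homotopic to a disk supported in $\mathcal H$ away from the winding regions, and that its contribution carries exactly the $U$-power dictated by the modified multiplicity. The decisive point is local to each winding region: the strand selected by $s_i$ lies on one side of $\gamma_i$, and which basepoint ($w_i$ or $z_i$) a disk is forced to cross depends on whether the Alexander coordinate $A_i$ at its endpoints lies above or below $s_i$. This is precisely the content of \eqref{eiz}--\eqref{eiw}: when $A_i\gg s_i$ the exponent reduces to $n_{z_i}(\phi)$, when $A_i\ll s_i$ it reduces to $n_{w_i}(\phi)$, and the $\max$-terms interpolate across the transition at $A_i=s_i$. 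Summing these local contributions over $i$ shows that the surgered differential counts the same disks as \eqref{def:DefD} with exponents $E^i_{s_i}(\phi)$, yielding a chain isomorphism, and in particular a quasi-isomorphism, onto $\Chain^-(\mathcal H,\s)$.

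The main obstacle is the analytic control that justifies the phrase ``sufficiently large''. I must rule out the extra holomorphic disks that wind through the surgery region or that connect different strands: such disks necessarily have domains of large multiplicity in the winding region, hence large energy, and a Gromov-compactness and energy-filtration argument shows they cannot support an index-one holomorphic representative once $\Lambda_i$ exceeds a bound depending only on $\mathcal H$. Making this bound uniform across all $i$ and across the finitely many relevant generators, and then checking that the resulting identification respects the $\Ring$-module structure and the $\SpinC$-decomposition, is where the genuine work lies; the combinatorial matching of \eqref{eiz}--\eqref{eiw} to the surgery exponents is a formal consequence of the local analysis in the winding regions.
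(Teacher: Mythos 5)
You should first be aware that the paper offers no proof of this statement at all: it is quoted verbatim from \cite[Theorem~10.1]{ManOzs} and closed with a \qed, so there is no internal argument to compare yours against. Judged on its own, your outline does follow the strategy of the cited source and of its knot-theoretic antecedents \cite{OSzint, RasmussenThesis}: build a surgered diagram with a winding region for each component, identify the generators in a fixed $\SpinC$ structure with $\Ta\cap\Tb$, and match the $w_i$-multiplicities of the surgered domains with the exponents $E^i_{s_i}$ of \eqref{eiz}--\eqref{eiw}. That much is the right skeleton.

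The difficulty is that the step you yourself single out as ``the genuine work'' is the one you describe by a mechanism that would not work. Unwanted index-one disks in the surgered diagram are not ruled out by an energy/Gromov-compactness bound: the energy of a homotopy class is the area of its domain, which depends on the chosen area form, and a class winding once through the surgery region need not have large multiplicity (let alone large area) there, so no uniform energy threshold in $\Lambda_i$ exists. What actually does the work is combinatorial: for fixed $\s$ and $\Lambda_i$ large, every generator representing the corresponding $\SpinC$ structure on $Y_{\Lambda}(L)$ is supported at strands near the center of each winding region, and any Maslov-index-one class with non-negative local multiplicities connecting two such generators is forced --- by positivity of domains together with the structure of the periodic domains of the surgered diagram --- to be a class of the original diagram glued to a prescribed piece in each winding region, whose $w_i$-count then reproduces $E^i_{s_i}(\phi)$. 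Beyond this, your sketch is silent on admissibility of the surgered diagram, on why the identification is independent of the choices made (so that one gets a well-defined quasi-isomorphism rather than an accidental bijection of generators), and on the compatibility with the $\Field[[U_1,\dots,U_\ell]]$-module structure and with the handling of several winding regions simultaneously, which in \cite{ManOzs} requires a genuine induction over sublinks. None of this invalidates the overall strategy, but as written the decisive exclusion step rests on an argument that fails.
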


Although for general links $\Chain^-({\mathcal H},\s)$ can be
challenging to compute, in the case where $L$ is the link diagram
associated to a plumbing tree, the complex $\Chain^-({\mathcal H},\s)$
can be easily determined with the help of the above theorem. Recall
that an {\em $L$-space} is a rational homology 3-sphere $Y$ with
the property that for each $\SpinC$ structure $\s$ over $Y$, the
Heegaard Floer homology ${\rm {\mathbf {HF}}} ^-(Y,\s )\cong \Field[[U]]$.

\begin{lem}\label{lem:lspaces}
  Let $G$ be a plumbing tree and $L$ be its
  corresponding link in $S^3$. Then,
  for each ${\mathbf s}\in {\mathbb H}(L)$,
  there is a homotopy equivalence
  $\Chain^-({\mathcal H},{\mathbf s})\simeq \Field[[U]]$.
\end{lem}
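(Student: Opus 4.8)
The plan is to reduce the computation to a sufficiently large surgery, where Theorem~\ref{thm:manozsfo} identifies $\Chain^-(\mathcal H,\mathbf s)$ with the Heegaard Floer chain complex of the surgered manifold. The key observation is that the plumbing link $L$ consists of unknots whose pairwise linking numbers are either $0$ or $1$; when we perform sufficiently large surgery on all components, the resulting $3$-manifold $Y_\Lambda(L)$ is again a plumbed $3$-manifold, but now on a graph $G'$ whose framings have all been made very negative (equivalently, very large in absolute value in the surgery convention). First I would argue that for $\Lambda$ sufficiently large the graph $G'$ has no bad vertices: increasing $|m_v|$ forces $d_v + m_v < 0$ at every vertex, so $G'$ is negative definite with reduced fundamental cycle, hence rational in the sense of Definition~\ref{def:type}.

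The decisive input is then that the boundary of a negative-definite plumbing with no bad vertices is an $L$-space. This is a classical consequence of the computations in \cite{OSzplum}: for such graphs $\HHFm(Y_{G'},\mathbf s)\cong\Field[[U]]$ for every $\SpinC$ structure $\mathbf s$, which is precisely the $L$-space condition recalled just before the statement. Granting this, Theorem~\ref{thm:manozsfo} gives, for each $\mathbf s\in\HH(L)$ (which restricts to a $\SpinC$ structure on the large surgery),
\[
H_*\bigl(\Chain^-(\mathcal H,\mathbf s)\bigr)\cong \HHFm\bigl(Y_\Lambda(L),\mathbf s\bigr)\cong \Field[[U]].
\]
Thus the homology of $\Chain^-(\mathcal H,\mathbf s)$ is a free rank-one $\Field[[U]]$-module.

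To upgrade the isomorphism on homology to an actual homotopy equivalence $\Chain^-(\mathcal H,\mathbf s)\simeq\Field[[U]]$, I would invoke a structural finiteness/freeness argument over the complete local ring $\Field[[U]]$. The complex $\Chain^-(\mathcal H,\mathbf s)$ is a free, finitely generated $\Field[[U]]$-module (it is generated over $\Ring$ by the finitely many intersection points $\Ta\cap\Tb$, and specializing the several $U_i$ to a single $U$ keeps it finitely generated), its differential is $U$-equivariant, and it is bounded below in Maslov grading. Over the PID $\Field[[U]]$ every finitely generated module is a direct sum of cyclic modules, so the chain homotopy type of such a complex is determined by its homology together with the torsion data; since the homology is the free module $\Field[[U]]$ and the complex is free, a standard argument (splitting off acyclic two-step summands $\Field[[U]]\xrightarrow{U^k}\Field[[U]]$ and free summands with zero differential) shows the complex is homotopy equivalent to its homology. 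Hence $\Chain^-(\mathcal H,\mathbf s)\simeq\Field[[U]]$.

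I expect the main obstacle to be the passage from the large-surgery homology computation to a genuine homotopy equivalence of complexes, rather than merely an isomorphism on homology. The homological statement is essentially immediate from Theorem~\ref{thm:manozsfo} plus the $L$-space property, but to produce a chain homotopy equivalence I must ensure the complex is free of finite rank over $\Field[[U]]$ and that working over the complete local ring lets me cancel all the acyclic summands; verifying that the relevant boundedness and finiteness hypotheses hold (so that the algebraic classification of complexes over $\Field[[U]]$ applies, and no ``infinite tail'' obstructs the splitting) is the delicate point. The geometric step of recognizing $Y_\Lambda(L)$ as a no-bad-vertex plumbing, and hence as an $L$-space, is the other place where care is needed, but it follows cleanly once the framings are taken large enough.
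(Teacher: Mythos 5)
Your proposal is correct and follows essentially the same route as the paper: identify $\Chain^-({\mathcal H},\s)$ with $\mathbf{CF}^-$ of a sufficiently large surgery via Theorem~\ref{thm:manozsfo} and then use that this surgered manifold is an $L$-space, which the paper simply cites as \cite[Lemma~2.6]{OSzplum} rather than re-deriving. One small slip in your re-derivation: large \emph{positive} surgery coefficients make every vertex satisfy $d_v+m_v>0$, so $Y_\Lambda(L)$ is a \emph{positive}-definite plumbing --- the orientation reversal of the negative-definite, no-bad-vertex plumbing you describe --- which is harmless here since the $L$-space condition is orientation-independent; your extra care in upgrading the homology computation to a homotopy equivalence over $\Field[[U]]$ is a detail the paper leaves implicit.
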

\begin{proof}
  By Theorem~\ref{thm:manozsfo} (which is identical to
  \cite[Theorem~10.1]{ManOzs}), $\Chain^-({\mathcal H},{\mathbf s})$
  computes the Heegaard Floer homology of a 3-manifold obtained by
  sufficiently positive surgeries on $L$.
  By~\cite[Lemma~2.6]{OSzplum}, this 3-manifold is an $L$-space,
  providing the desired isomorphism.
\end{proof}

The result given in~\cite[Theorem~7.7]{ManOzs} (restated in
Theorem~\ref{thm:SurgeryTheorem} below) provides a chain complex, described in
terms of the $\Chain^-({\mathcal H},\s)$ from above, which computes
the Heegaard Floer homology of arbitrary surgeries on $L$. To describe
this, we need a little more notation. Let us fix $\Lambda = (\Lambda
_1, \ldots , \Lambda _{\ell})$.  Let $M\subseteq L$ be a sublink with
$m$ components.  The projection map
\[
\psi^M \colon \HH(L)\to \HH(L- M)
\]
is defined as follows. Label the components of
$L=L_1\cup\dots\cup L_{\ell}$, and the components of 
$L-M=L_{j_1}\cup\dots\cup L_{j_{\ell-m}}$.  We then define
$$\psi^M=(\psi^M_{j_1},\dots,\psi^M_{j_{\ell-m}})$$
by 
$$\psi^M_{j_i}(\s)=s_{j_i}-\frac{\ell k(L_i,M)}{2}.$$
Here, $\ell k(L_i,M)$ denotes the linking number of $L_i$ with $M$;
recall that both are oriented (via an orientation induced from the
ambient link $\orL$).

As a module over $\Ring= \Field [[U_1, \ldots , U_{\ell}]]$, 
the surgery complex for the 3-manifold $Y_{\Lambda }(L)$ is defined by
\begin{equation}
  \label{eq:SurgeryComplex}
({\mathcal C}^-({\mathcal H},\Lambda),{\mathcal D}^-)
= \bigoplus_{M\subseteq L} \prod_{{\s}\in {\mathbb H}(L)}
\Chain^-({\mathcal H}^{L-M},\psi^M({\s})).
\end{equation}
To define its differential, we need yet more notation. We
need to give some algebraically defined maps, which are indexed by
sublinks $M\subseteq L$, equipped with orientations (not necessarily
agreeing with the induced orientation from $\orL$). We write this data
(sublink, together with a possibly different orientation) $\orM$; and
let $I_+(\orL,\orM)$ resp. $I_-(\orL,\orM)$ denote the sublink
consisting of components of $M$ whose orientation (in $\orM$) agree
resp. disagree with the orentation on the ambient link $\orL$. For a
sublink $M\subseteq L$, we let $\Omega(M)$ denote the set of
orientations on $M$.

Let $\oHH(L)$ denote the extension of $\HH(L)$, where we allow some of
the components to be $\pm \infty$.  For $i \in \{1, \dots, \ell\}$, we
define a projection map $p^{\orM} : \HH(L) \to \oHH(L) $ so that the $i^{th}$ component of $p^{\orM}({\mathbf s})$ is specified by
$$ 
\begin{cases}
+\infty & \text{ if } i \in I_+(\orL, \orM), \\
-\infty & \text{ if } i \in I_-(\orL, \orM), \\
s_i & \text{ otherwise.}
\end {cases}
$$

There are algebraically defined maps
\[ 
{\mathcal I}^{\orM}_{\s} : \Chain^-({\mathcal H},
{\s}) \to \Chain^- ({\mathcal H}, p^{\orM}({\s}))
\]
given by
\[ 
{\mathcal I}^{\orM}_{\s} \x = \prod_{i \in I_+(\orL, \orM)}
U_{i}^{\max(A_i(\x) - s_i, 0)} \cdot \prod_{i \in I_-(\orL,
  \orM)} U_{i}^{\max(s_i - A_i(\x), 0)} \cdot \x.
\]

For each sublink $M\subset L$ fix a Heegaard diagram ${\mathcal H}^{L-
  M}$, and fix an orientation $\orM$ on $M$.  Let $J(M)\subset
\oHH(L)$ denote the subspace $\s=(s_1,\dots,s_\ell)$, for which
$s_i=+\infty$ if $L_i\in I_+(M)$, and $s_i=-\infty$ if $L_i\in
I_-(M)$.  Counting holomorphic curves induces a homotopy equivalence
\[ 
\theta ^{\orM}_{\s}\colon \Chain^- ({\mathcal H}, p^{\orM}({\s })) \to
\Chain^- ({\mathcal H}^{L- M},\psi^{\orM}({\s})).
\] 
(This homotopy equivalence was called ${\hat D}_{\s}^{\orM}$
in~\cite{ManOzs}. We renamed it so that that it does not look like a
differential.)

The differential on the surgery complex is given as a sum 
of components
\[{\Phi}_{\s}^{\orM}\colon \Chain^-({\mathcal H},{\s})
\to \Chain^-({\mathcal H}^{L-M},\psi^{\orM}({\s})),
\]
defined by
\[
\Phi_{\s}^{\orM}=\theta^{\orM}_{p^{\orM}({\s})}\circ
{\mathcal I}^{\orM}_{\s}.
\] 
We now define the boundary operator ${\mathcal D}^-$ on the surgery
complex as follows. For $\s \in \HH(L) $ and $\x \in
\Chain^-({\mathcal H}^{L - M}, \psi^{M}(\s))$, we set
\begin {eqnarray*}
  {\mathcal D}^-(\s, \x) &=& \sum_{N \subseteq L - M} \sum_{\orN \in
    \Omega(N)}
  (\s + \Lambda_{\orL, \orN}, \Phi^{\orN}_{\psi^{M}(\s)}(\x)) \\
  &\in& \bigoplus_{N \subseteq L - M} \bigoplus_{\orN \in \Omega(N)}
  \Chain^-({\mathcal H}^{L-M-N}, \psi^{M \cup \orN} (\s)) \subseteq
  \Chain^-({\mathcal H}, \Lambda).
\end {eqnarray*}
Of course, the homotopy equivalences $\theta ^{\orM}_{\s}$ appearing
in the differential $\Phi_{\s}^{\orM}$ are, in general, tricky to
compute.  For our present purposes, though, it turns out that a
precise computation is unnecessary.

Recall that
$({\mathcal C}^-({\mathcal
  H},\Lambda),{\mathcal D}^-)$ is a module over $\Field[[U_1,\dots,U_n]]$.
Choosing $U=U_1$, we can view it as a module over $\Field[[U]]$ (it
will turn out that our results are independent of the numbering of the
$U_i$).  

The complex $({\mathcal C}^-({\mathcal H},\Lambda),{\mathcal D}^-)$
admits a natural splitting into summands, as follows.  Consider the
subspace $H(L,\Lambda)$ of $H_1(Y- L)$ spanned by framings $\Lambda_i$
of the components of $L$.  The
complex $({\mathcal C}^-({\mathcal H},\Lambda),{\mathcal D}^-)$
naturally splits into summands indexed by the quotient space
$\HH(L)/H(L,\Lambda)$. In turn, this quotient space is naturally
identified with $\SpinC(Y,\Lambda)$, via for example, the filling
construction from~\cite[Section~3.7]{OSzlinks}.

One of the key results in~\cite{ManOzs} is the following:

\begin{thm} (\cite[Theorem~7.7]{ManOzs}) 
  \label{thm:SurgeryTheorem}
  The homology of the chain complex $({\mathcal C}^-({\mathcal
    H},\Lambda),{\mathcal D}^-)$ is identified with
  ${\mathbf{HF}}^-(Y_G)$.  Indeed, the identification respects the
  splitting of both spaces into summands indexed by
  $\SpinC(Y_G)$. \qed
\end{thm}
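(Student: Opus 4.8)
The plan is to prove the identification by induction on the number $\ell$ of components of $L$ (here $Y_\Lambda(L)=Y_G$, with $\Lambda$ encoding the framings of $G$). First I would record the hypercube structure of \eqref{eq:SurgeryComplex}: the sum $\bigoplus_{M\subseteq L}$ organizes $({\mathcal C}^-({\mathcal H},\Lambda),{\mathcal D}^-)$ as an $\ell$-dimensional iterated mapping cone, since each component $\Phi_{\s}^{\orM}=\theta^{\orM}_{p^{\orM}(\s)}\circ{\mathcal I}^{\orM}_{\s}$ of ${\mathcal D}^-$ enlarges the chosen (oriented) sublink and hence raises the cube degree. For the base case $\ell=1$ one recognizes $({\mathcal C}^-,{\mathcal D}^-)$ as the integer surgery mapping cone of \cite{OSzint}, whose homology is ${\HHFm}$ of the surgered manifold.

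For the inductive step I would single out one component, say $L_\ell$, and filter the hypercube in that direction. This presents $({\mathcal C}^-,{\mathcal D}^-)$ as $\mathrm{Cone}(\Phi)$, where $\Phi$ is assembled from the maps $\Phi^{\orM}$ with $L_\ell\in M$, and its sub- and quotient complexes are surgery-type hypercubes of one lower dimension in the $L_\ell$ direction; the inductive hypothesis together with the large surgery statement of Theorem~\ref{thm:manozsfo} controls their homology. I expect the connecting map $\Phi$ to be identified, on homology, with the map induced by the two-handle cobordism attaching the $\Lambda_\ell$-framed handle along $L_\ell$, so that the surgery exact triangle identifies $H_*(\mathrm{Cone}(\Phi))$ with ${\HHFm}(Y_\Lambda(L))$, closing the induction. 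The $\SpinC$-refinement should come for free: every $\Phi_{\s}^{\orM}$ preserves the splitting indexed by $\HH(L)/H(L,\Lambda)\cong\SpinC(Y_\Lambda(L))$, and under the identification this matches the decomposition of ${\HHFm}(Y_G)$.

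The hard part will be the analytic input underlying the hypercube algebra. Each $\theta^{\orM}_{\s}$ counts pseudo-holomorphic polygons, and I would need the gluing and associativity relations ensuring that the relevant composites agree up to prescribed chain homotopies; this is precisely what forces ${\mathcal D}^-\circ{\mathcal D}^-=0$ and what allows one to compress the hyperbox so that the filtration above really has the stated faces. A second obstacle is that \eqref{eq:SurgeryComplex} involves infinite products over $\HH(L)$, so the mapping-cone and induced spectral-sequence arguments must be run after a truncation that replaces the full hypercube by a finite quotient carrying the same homology; verifying that this truncation is a quasi-isomorphism compatible with the $\SpinC$-splitting is the delicate bookkeeping. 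Once these inputs are secured the induction goes through, which is exactly the content of \cite[Theorem~7.7]{ManOzs}.
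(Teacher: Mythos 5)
The first thing to say is that the paper does not prove this statement at all: Theorem~\ref{thm:SurgeryTheorem} is quoted verbatim from \cite[Theorem~7.7]{ManOzs}, and the \qed{} in the statement marks it as an imported result. So there is no ``paper's own proof'' to compare your argument against; the honest comparison is with the proof in \cite{ManOzs} itself. Measured against that, your outline captures the broad architecture correctly: the hypercube/iterated mapping cone structure of \eqref{eq:SurgeryComplex}, the role of Theorem~\ref{thm:manozsfo} (large surgeries) as the anchor, the need for a truncation to handle the infinite products over $\HH(L)$, and the polygon-counting associativity relations that make ${\mathcal D}^-\circ{\mathcal D}^-=0$ and allow the hyperbox to be compressed. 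These are indeed the main ingredients of the argument in \cite{ManOzs}.

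The genuine gap is in your inductive step. Knowing that the connecting map $\Phi$ agrees \emph{on homology} with the two-handle cobordism map does not determine $H_*(\mathrm{Cone}(\Phi))$, and the surgery exact triangle by itself only gives a long exact sequence relating three surgeries --- it does not identify the cone of your particular chain map with $\HHFm(Y_\Lambda(L))$. To close the induction you need a chain-level statement: that the one-dimensional hypercube in the $L_\ell$-direction is filtered chain homotopy equivalent to the integer surgery mapping cone for the knot $L_\ell$ \emph{inside the $3$-manifold obtained by surgery on the remaining components}, where $L_\ell$ is in general no longer null-homologous and may be a genuinely complicated knot (this is the knot $K_v\subset Y_M$ appearing in the proof of Proposition~\ref{prop:ssident}). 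Supplying a surgery formula in that generality, and matching the maps $\theta^{\orM}_{\s}$ with the ones appearing in it up to coherent homotopies, is precisely the content that makes \cite[Theorem~7.7]{ManOzs} a long and delicate theorem rather than a formal induction from the $\ell=1$ case of \cite{OSzint}. Your sketch names the right obstacles but does not resolve them; as written it would not go through without that additional input.
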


The surgery complex has a natural filtration $S$ induced by the number
of components in the sublink $M$.  The differential ${\mathcal D}^-$
then splits as
\[
{\mathcal D}^- = \sum_{k=0}^{\infty}{\mathcal D}^-_k,
\]
where ${\mathcal D}^-_k$ is a term which drops the filtration level by
exactly $k$. In particular, ${\mathcal D}^-_0$ is the differential on
the associated graded complex. 

By the {\em $E_1$ term of the spectral sequence}, we mean the
chain complex whose underlying $\Field[[U]]$-module is $H_*({\mathcal
  C}^-({\mathcal H},\Lambda),{\mathcal D}^-_0)$, and whose
differential is induced by ${\mathcal D}^-_1$.

\begin{prop}\label{prop:ssident}
  The $E_1$ term in the filtration on $({\mathcal C}^-({\mathcal
    H},\Lambda),{\mathcal D}^-)$ is identified with $\CFmComb(G)$.
\end{prop}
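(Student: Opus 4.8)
The plan is to identify the two complexes explicitly by computing the $E_1$ term of the surgery filtration. The $E_1$ term, by definition, is $H_*(\mathcal{C}^-(\mathcal{H},\Lambda),\mathcal{D}^-_0)$ equipped with the differential induced by $\mathcal{D}^-_1$, and I must show this coincides with $(\CFmComb(G),\partial)$. First I would unpack $\mathcal{D}^-_0$: it is the part of $\mathcal{D}^-$ preserving the sublink filtration level, so it counts only the terms with $N=\emptyset$, i.e. the internal differentials of the summands $\Chain^-(\mathcal{H}^{L-M},\psi^M(\s))$. By Lemma~\ref{lem:lspaces}, each such summand is homotopy equivalent to $\Field[[U]]$; hence $H_*(\mathcal{C}^-(\mathcal{H},\Lambda),\mathcal{D}^-_0)$ has one $\Field[[U]]$ generator for each pair $(M,\s)$ with $M\subseteq L$ and $\s\in\HH(L)/H(L,\Lambda)$, where the sublink $M$ records a subset $E\subseteq V=\Vertices(G)$ (since the components $L_i$ correspond to vertices $v_i$). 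So at the level of underlying $\Field[[U]]$-modules, the $E_1$ term is a product over pairs $[\s,E]$, which I would match with the generators $[K,E]$ of $\CFmComb(G)$ via the identification of $\SpinC$ data.

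The identification of modules is the bookkeeping step; the substance is matching the differentials. The induced differential is $\mathcal{D}^-_1$, the part dropping the filtration by exactly one, which counts orientations $\orN$ of one-component sublinks $N=\{L_v\}$. For each such $v$ there are two orientations, agreeing or disagreeing with $\orL$, giving exactly the two types of terms $\Phi^{\orN}$. I would compute the effect of each $\Phi^{\orN}=\theta^{\orN}\circ\mathcal{I}^{\orN}$ on the $\Field[[U]]$-generator. The map $\mathcal{I}^{\orN}$ multiplies by $U_v$ to a power given by $\max(\pm(A_v-s_v),0)$, which after passing to homology becomes exactly the exponents $a_v[K,E]$ and $b_v[K,E]$ appearing in \eqref{eq:boundary}; the homotopy equivalence $\theta^{\orN}$ shifts $\s$ by $\Lambda_{\orL,\orN}$, which under the $\SpinC$ dictionary corresponds to sending $K\mapsto K$ (orientation agreeing) or $K\mapsto K+2v^*$ (orientation disagreeing). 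Thus the two orientations of $N=\{v\}$ reproduce precisely the two summands $U^{a_v[K,E]}\otimes[K,E-v]$ and $U^{b_v[K,E]}\otimes[K+2v^*,E-v]$ of $\partial$.

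The main obstacle will be making the exponent computation rigorous: I must show that the power of $U=U_1$ surviving to the $E_1$ page, after composing the ``external'' map $\mathcal{I}^{\orN}$ with the L-space homotopy equivalences $\theta^{\orN}$ (which are themselves only known abstractly, not by an explicit formula), equals the combinatorially defined $a_v,b_v$. Here I would exploit that $\theta^{\orN}$ is a homotopy equivalence between complexes each quasi-isomorphic to $\Field[[U]]$, so on homology it is determined up to a unit and the only invariant data is the $U$-power, which is pinned down by the grading. Concretely, I would track the $\delta$-grading (which counts $|E|$) and the Maslov grading: since $\theta^{\orN}$ preserves grading and $\mathcal{I}^{\orN}$ contributes a known $U$-power, the total $U$-exponent is forced by the requirement that the induced map land in the correct summand with the correct internal degree. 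This is exactly the same normalization computation already carried out in the proof of Lemma~\ref{lem:DiffDropsOne}, so I expect the grading constraint to identify the surviving exponents with $\min\{s_v-A_v,0\}$-type quantities that simplify to $a_v[K,E]$ and $b_v[K,E]$. The remaining verification that no higher-multiplicity $U$-contributions from holomorphic curves interfere is handled by the L-space property, which collapses each summand's homology to a single generator and leaves only the externally-defined exponents.
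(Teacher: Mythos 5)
Your module-level identification and the $\SpinC$ bookkeeping for the two components of ${\mathcal D}^-_1$ (matching the two orientations of $N=\{L_v\}$ with the targets $[K,E-v]$ and $[K+2v^*,E-v]$ via ${\t}_+={\t}_-+\PD[F]$) agree with the paper. The divergence, and the gap, lies in how you pin down the $U$-exponents of the induced maps on homology. You propose to force them by a grading argument: since $\theta^{\orN}$ is only abstractly a homotopy equivalence and each summand's homology is a single $\Field[[U]]$-tower, the exponent would be determined by the Maslov gradings of the tops of the source and target towers. But those gradings are the correction terms of the large surgeries $Y_M$ and $Y_{M'}$, and to conclude that the exponent equals $a_v[K,E]$ (resp.\ $b_v[K,E]$) you would need to know that these agree with the lattice-theoretic quantity $2g([K,E])+|E|+\tfrac{1}{4}(K^2-3\sigma(G)-2\chi(G))$. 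That identification is not supplied by Lemma~\ref{lem:DiffDropsOne}, which is a purely lattice-side computation; it is essentially the content of Proposition~\ref{prop:MaslovEqual}, which the paper deduces \emph{from} Proposition~\ref{prop:ssident}, so your argument as written is circular unless you import the computation of correction terms of plumbed $L$-spaces as external input. Relatedly, the claim that the exponent of ${\mathcal I}^{\orN}$ ``becomes exactly $a_v,b_v$ on homology'' cannot be read off from the formula: the homology generator is a combination of intersection points with differing $A_v(\x)$, and $\theta^{\orN}$ may contribute further $U$-powers. The grading method does pin down the \emph{difference} of the two exponents (the correction terms cancel, leaving the degree shifts of the two maximal-square $\SpinC$ structures on the cobordism), but not their common additive normalization.

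The paper avoids any direct computation by a deformation argument: (i) the maps $d_1^{\pm}$, hence the exponents $\alpha_v,\beta_v$, make no reference to the framings $\Lambda$ (Lemma~\ref{lem:HFIndependence}); (ii) the lattice exponents $a_v,b_v$ are likewise unchanged when the framing at $v$ is varied with $K(v)+m_v$ held fixed (Lemma~\ref{lem:LatticeIndependence}); and (iii) for sufficiently negative framings the relevant manifolds are $L$-spaces and the equalities $a_v=\alpha_v$, $b_v=\beta_v$ are exactly \cite[Proposition~4.1]{Lspace} (Lemma~\ref{lem:NegSurgeryIdent}). To repair your approach you would need an input of the same strength --- either that proposition or the sharpness of the relevant two-handle cobordisms --- to fix the additive constant that your grading argument leaves undetermined.
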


\begin{proof}
  Let us first identify the $\Field [[U]]$-modules.  Recall that
  $\Vertices (G)$ can be used to index the components of $L$,
  therefore sublinks of $L$ naturally correspond subsets of
  $V=\Vertices (G)$. Furthermore, a characteristic element $K$
  specifies a $\SpinC$ structure on $X_G$, and therefore an element
  $\s \in \HH (L)$.  By Lemma~\ref{lem:lspaces}, we have that
  $H_*(\Chain^-({\mathcal H}^{L-M},\psi^M({\mathbf s})))=\Field
  [[U]]$. Mapping the generator $U^i\otimes [K,E]$ of $\CFmComb (G)$
  to $U^i$ in the factor $H_*(\Chain^-({\mathcal
    H}^{L-M},\psi^M({\mathbf s})))$ of $H_*({\mathcal C}^-({\mathcal
    H},\Lambda),{\mathcal D}^-_0) $ corresponding to the sublink $M$
  indexed by $E$ and the $\SpinC$ structure $\s$ corresponding to $K$,
  we get an isomorphism
  \[
  \CFmComb (G)\to H_*({\mathcal C}^-({\mathcal H},\Lambda),{\mathcal D}^-_0) 
  \]
  of $\Field [[U]]$-modules.

  Therefore, in order to verify the lemma, we need to identify
  ${\mathcal D}^-_1 $ with the boundary operator $\partial$ of
  $\CFmComb (G)$ described in Equation~\eqref{eq:boundary}.  Let
  $M'\subseteq M$ denote a sublink with $|M|=|M'|+1$.  The boundary
  map ${\mathcal D}^-_1$ applied to an element of
  $H_*(\Chain^-({\mathcal H}^{L-M},\psi^M({\mathbf s})))$ has two
  components in $H_*(\Chain^-({\mathcal H}^{L-M'},\psi^{M'}({\mathbf
    s})))$, which correspond to the two orientations of the knot $M-
  M'$.  Let us denote the two components $d_1^+$ and $d_1^-$. Recall
  that $M- M'$ is a knot, and hence it corresponds to some vertex $v$
  of the plumbing graph $G$. Although $M'- M$ is the unknot in $S^3$,
  in $Y_M$ it represents a possibly complicated knot, which we denote
  $K_v\subset Y_M$.

  The components $d_-$ and $d_+$ of the differential have an interpretation as a
  four-manifold invariant. Specifically, 
  the following square commutes:
  \[
  \begin{CD}
    H_*(\Chain^-({\mathcal H}^{L-M},\psi^M({\mathbf s})))
    @>{d_1^{\pm}}>>
    H_*(\Chain^-({\mathcal H}^{L-M'},\psi^{M'}({\mathbf s}))) \\
    @VVV @VVV \\
    {\mathbf {HF}}^-(Y_{M}) @>{\t _{\pm}}>> {\mathbf {HF}}^- (Y_{M'})
  \end{CD}
  \]
  Here, $Y_M$ resp. $Y_{M'}$ denotes any sufficiently large positive
  surgery on $M$ resp $M'$, the vertical maps are the identifications
  from Lemma~\ref{lem:lspaces}, the top horizontal map is either of
  the two maps $d_1^{\pm}$, and the bottom horizontal map is induced
  by the single two-handle cobordism $W$ from $Y_{M}$ to $Y_{M'}$,
  equipped with one of the two $\SpinC$ structures ${\t}_+$
  or ${\t}_-$ of maximal square.  An orientation on
  $M- M'$ specifies which component $d^\pm_1$ we are using:
  when the orientation of $M- M'$ agrees with that on $L$, we
  denote the component by $d^+_1$, and the other by $d^-_1$. 

  The orientation on $M- M'$ also specifies which of the two maximal
  square $\SpinC$ structures ${\t}_\pm$ we are using.  Both ${\t}_+$
  and ${\t}_-$ are $\SpinC$ structures with maximal square, they have
  the same evaluation on $Y_M$, and
  \[  
  {\t}_+ = {\t}_- + \PD[F],
  \]
  where here $F\in H_2(W,Y_M; \Z)\cong\Z$ is the generator with the
  property that $\partial F \in H_1(Y_M; \Z )$ corresponds to our knot $M-
  M'$ with its given orientation.  (Commutativity of the above square
  is verified in~\cite[Theorem~10.2]{ManOzs}.)

  Both of the top horizontal maps are clearly non-trivial (they are
  isomorphisms in all sufficiently large degrees), so they must both
  be multiplication by some power of $U$. We let $\alpha_v$ denote the
  $U$-power associated to $d^+_1$ and $\beta_v$ denote the $U$-power
  associated to $d^-_1$.

  \begin{lem}
    \label{lem:HFIndependence}
    The exponents $\alpha_v$ and $\beta_v$ are independent of the
    surgery coefficients $\Lambda$.
  \end{lem}

  \begin{proof}
    This is clear: the maps $d^+_1$ and $d^-_1$ make no reference to
    surgery coefficients.
  \end{proof}
  
  The same property holds on the lattice homology side:

  \begin{lem}
    \label{lem:LatticeIndependence}
    Let $G$ and $G'$ be two plumbing graphs, whose underlying graphs
    $\Gamma$ and $\Gamma'$ coincide. Fix $K\in \Char(G)$, and $E\subset
    \Vertices(G)=\Vertices(G')$, and $v\in E$.  Let $K'\in\Char(G')$ be the
    characteristic vector with 
    \begin{align*}
      K'(v)+m'_v&=K(v)+m_v \\
      K'(w)&=K(w)
    \end{align*}
    for all $w\neq v$.
    Then,
    \begin{align*}
      a_v[K,E]& =a_v[K',E] \\ 
      b_v[K,E] &= b_v[K'E]
    \end{align*}
  \end{lem}

  \begin{proof}
    By Equation~\eqref{eq:gweight} and
    the choice of $K'$, $f[K,I]=f[K',I]$.
    Since $f$ determines $a_v$ and $b_v$, the claim follows.
  \end{proof}

  \begin{lem}
    \label{lem:NegSurgeryIdent}
For sufficiently negative surgery coefficents along the sublink $M$ we
have that $a_v=\alpha_v$ and $b_v=\beta_v$
  \end{lem}
  
  \begin{proof}
If the surgery coefficients along the sublink $M'$ are sufficiently
negative, the 3-manifold $Y_{M'}$ is an $L$-space. Therefore the
statement of the lemma is essentially~\cite[Proposition~4.1]{Lspace}
(cf.~\cite[Remark~4.2]{Lspace})
applied to the graph $M'$, where $v=M- M'$ is the distinguished
vertex.
  \end{proof}

  The identification stated in the proposition is equivalent to the
  statement that $a_v=\alpha_v$ and $b_v=\beta_v$ for the given
  framing $\Lambda$. This statement, however, is an immediate
  consequence of Lemmas~\ref{lem:HFIndependence},
  \ref{lem:LatticeIndependence}, and~\ref{lem:NegSurgeryIdent}.
\end{proof}

\begin{prop}\label{prop:MaslovEqual}
  The identification of Proposition~\ref{prop:ssident} 
  respects the (relative or absolute, depending on 
  the $\SpinC$ structure) Maslov gradings.
\end{prop}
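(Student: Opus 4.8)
The plan is to deduce the grading statement from the single structural input of Proposition~\ref{prop:ssident}, namely that under the identification ${\mathcal D}^-_1$ is carried to $\partial$. On both sides multiplication by $U$ lowers the grading by two, and on both sides the differential is homogeneous of Maslov degree $-1$: for $\partial$ this is Lemma~\ref{lem:DiffDropsOne}, while ${\mathcal C}^-$ carries the Maslov grading of~\cite{ManOzs} for which ${\mathcal D}^-$ (hence its associated graded differential ${\mathcal D}^-_0$ and the induced differential ${\mathcal D}^-_1$ on the $E_1$-page) is homogeneous of degree $-1$. The component of the differential carrying $[K,E]$ to $[K,E-v]$ is multiplication by $U^{a_v}$, again by Proposition~\ref{prop:ssident}, so degree-$(-1)$ homogeneity forces, on either side,
\[
\grrr([K,E-v]) = \grrr([K,E]) - 1 + 2a_v[K,E].
\]
On the lattice side this is exactly the computation in the proof of Lemma~\ref{lem:DiffDropsOne} (using $A_v([K,E]) = g([K,E-v])$); on the surgery side it is the tautology that a degree-$(-1)$ map equal to multiplication by $U^{a_v}$ places the target generator in grading $\grrr([K,E])-1+2a_v$. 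Since these $U^{a_v}$-components keep $K$ fixed and are nonzero, every $[K,E]$ is joined to $[K,\emptyset]$ by iterating them; together with $U$-equivariance (which handles the factor $U^i$ via the $-2i$ term) this reduces the whole statement to matching the gradings of the bottom classes $[K,\emptyset]$.

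For a torsion $\SpinC$ structure I would pin down the absolute grading of $[K,\emptyset]$ through the four-manifold interpretation of the surgery complex. The summand of ${\mathcal C}^-$ indexed by $[K,\emptyset]$ is the factor with $M=L$ and empty residual link, i.e. it is ${\mathbf {CF}}^-(S^3)$, and its generator represents the image of $\HHFm(S^3)$ under the cobordism $W$ from $S^3$ to $Y_G$ obtained by deleting a ball from $X_G$, equipped with the $\SpinC$ structure $\t_K$ with $c_1(\t_K)=K$. (This is what the grading conventions of~\cite{ManOzs} amount to; it can also be seen by composing the two-handle maps $d_1^{\pm}$ of Proposition~\ref{prop:ssident} along a path in the cube, which glue to the cobordism $X_G$.) The grading shift of $F^-_{W,\t_K}$ is $\tfrac14\bigl(c_1(\t_K)^2-2\chi(W)-3\sigma(W)\bigr)$; since $W$ is built from $\chi(G)=|V|$ two-handles we have $\chi(W)=\chi(G)$ and $\sigma(W)=\sigma(G)$, so this equals $\tfrac14\bigl(K^2-3\sigma(G)-2\chi(G)\bigr)$, which is precisely the value of \eqref{eq:maslov} on $[K,\emptyset]$ (the terms $2g([K,\emptyset])$ and $|\emptyset|$ both vanishing). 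The additive normalization coming from $\HHFm(S^3)$ is the one built into \eqref{eq:maslov}, consistent with the single-vertex computation yielding grading $0$ for $S^3$ in Examples~\ref{ex:peldak}. With the recursion above this gives the absolute Maslov grading for all generators in each torsion $\SpinC$ structure.

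For a non-torsion $\SpinC$ structure there is no absolute grading, and I would instead match relative gradings modulo the divisibility $d$ of $c_1(\s)$. The recursion above already identifies the relative gradings of $[K,E]$ and $[K,E']$ for a fixed $K$, so the only additional point is to compare two bottom classes $[K,\emptyset]$ and $[K',\emptyset]$ lying in the same $\SpinC$ structure on $Y_G$, say $K'=K+2x$. On the surgery side their gradings differ by the difference of the shifts of $F^-_{W,\t_K}$ and $F^-_{W,\t_{K'}}$, namely $\tfrac14\bigl(K^2-(K')^2\bigr)=x\cdot(K+x)\bmod d$, which is exactly the relative grading assigned to this pair by the relative Maslov grading formula stated just before Examples~\ref{ex:peldak}. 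The recursion then propagates this agreement to arbitrary pairs of generators.

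I expect the genuine obstacle to be the anchoring step of the second paragraph: rigorously identifying the bottom summand ${\mathbf {CF}}^-(S^3)$, with the grading it inherits inside ${\mathcal C}^-$, with the cobordism map $F^-_{W,\t_K}$. This requires unwinding the absolute grading conventions of~\cite{ManOzs}, tracking the overall normalization, and using that the two-handle components $d_1^{\pm}$ are the maps associated to $\SpinC$ structures of maximal square (as in the commutative squares in the proof of Proposition~\ref{prop:ssident}); once this is in place, the remaining recursion and $U$-equivariance are formal.
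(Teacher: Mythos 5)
Your proposal is correct and follows the same strategy as the paper's proof: anchor the grading at the bottom generators $[K,\emptyset]$ and propagate to all $[K,E]$ using the fact that on both sides the differential is homogeneous of Maslov degree $-1$ and $U$-equivariant. The only difference is one of detail: the paper simply asserts that the absolute grading of $[K,\emptyset]$ agrees with that of $(\s,\emptyset)$, whereas you justify this via the identification of that summand with ${\mathbf{CF}}^-(S^3)$ and the cobordism-map grading shift $\tfrac{1}{4}(K^2-3\sigma(G)-2\chi(G))$ — a genuine (and correctly executed) filling-in of the step the paper leaves implicit.
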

\begin{proof}
  Recall that in the proof of Proposition~\ref{prop:ssident} the
  generator $[K,E]$ of $\CFmComb (G)$ has been identified with the
  pair $\s , M$, where $M$ is a sublink of the link $L$ defined by the
  plumbing graph $G$ and $\s$ is a relative $\SpinC$ structure. In
  particular, $\vert M \vert =\vert E \vert$.  We claim that this
  identification respects Maslov gradings. Indeed, if $K$
  represents a torsion $\SpinC$ structure, then the absolute Maslov
  grading of $[K,\emptyset]$ (thought of as an element of
  $\CFmComb(G)$) coincides with that of $(\s,\emptyset )$ (though of as an
  element of $({\mathcal C}^-({\mathcal H},\Lambda),{\mathcal
    D}^-)$). Since the boundary map drops Maslov grading by one, the
  identification of Maslov gradings extends to all generators of the
  form $[K,E]$.
  
  The same argument applies in the relatively graded setting (when $K$
  restricts to a non-torsion class on $\partial X_G=Y_G$).
\end{proof}

We turn to Theorem~\ref{thm:mainss}:

\begin{proof}[Proof of Theorem~\ref{thm:mainss}]
  Theorem~\ref{thm:SurgeryTheorem} presents ${\mathbf{HF}}^-(Y_G)$ as
  the homology of a filtered chain complex.  Theorem~\ref{thm:mainss}
  now follows from this theorem, together with the interpretation of
  the $E_1$ term on the filtration provided by
  Proposition~\ref{prop:ssident}. Proposition~\ref{prop:MaslovEqual}
  then provides the proof of the claim about the identification of
  Maslov gradings.
\end{proof}

Certain higher differentials in the spectral sequence vanish for {\em
  a priori} reasons. This is most easily seen when one appeals to
gradings.  
\begin{prop}\label{prop:d2n}
The differential ${\mathcal {D}}^-_{2n}$ on the page $E_{2n}$ vanishes.
\end{prop}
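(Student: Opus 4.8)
The plan is to run a parity argument that plays the Maslov grading off against the filtration grading, exactly as the sentence preceding the statement advertises. Recall that the surgery complex $(\mathcal{C}^-(\mathcal{H},\Lambda),\mathcal{D}^-)$ carries two gradings: the Maslov grading (absolute when $\s$ is torsion, relative in $\Q/d$ otherwise), with respect to which every component $\mathcal{D}^-_k$ of the total differential is homogeneous of degree $-1$; and the filtration $S$ by the number of components of the sublink $M$, with respect to which $\mathcal{D}^-_k$ has degree $-k$. Consequently the induced differential on the page $E_r$ has Maslov degree $-1$ and filtration degree $-r$. The first step I would take is to record this bidegree statement explicitly, noting that it uses only that $\mathcal{D}^-$ is Maslov-homogeneous and that the spectral sequence respects the $\SpinC$-splitting.

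Next I would transport the parity relation between the two gradings, which is available on $E_2\cong\HFmComb(G)$ through Propositions~\ref{prop:ssident} and~\ref{prop:MaslovEqual}, to every later page. On the lattice side the filtration level of a generator $U^i\otimes[K,E]$ is its $\delta$-grading $\vert E\vert$, and the earlier Maslov computation shows that, within a fixed $\SpinC$ structure, $\grrr(U^i\otimes[K,E])\equiv\vert E\vert+c_{\s}\pmod 2$ for a constant $c_{\s}$ depending only on $\s$; the crucial input is that $\tfrac14(K^2-(K')^2)=x\cdot(K+x)$ is even because $K$ is characteristic. Since every $E_r$ for $r\geq 2$ is a subquotient of $E_2$, and since the page differentials respect the $\SpinC$-splitting, this congruence between Maslov parity and filtration parity persists on all pages.

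The proposition then follows from a direct comparison modulo $2$. Suppose $\mathcal{D}^-_{2n}$ carried a class at bidegree $(\grrr,f)$ to a nonzero group at bidegree $(\grrr-1,f-2n)$. The parity relation at the source gives $\grrr\equiv f+c_{\s}$, while the same relation at the target gives $\grrr-1\equiv(f-2n)+c_{\s}\pmod 2$; subtracting yields $1\equiv 2n\equiv 0\pmod 2$, a contradiction. Hence no nonzero image can occur, so $\mathcal{D}^-_{2n}=0$. Note that the companion statement for odd pages is precisely \emph{not} excluded by this argument, which is consistent with the nonvanishing of the lattice differential on $E_2$.

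The one place requiring genuine care, and the main obstacle, is the non-torsion case, where the Maslov grading is only relative with values in $\Q/d$. Here I would invoke the fact recorded earlier that the divisibility $d$ of $K\vert_{Y_G}$ is always even, so that reduction modulo $2$ remains well-defined on $\Q/d$, and then check that the relative grading formula still yields $\grrr\equiv\vert E\vert-\vert E'\vert\pmod 2$ within each $\SpinC$ summand (the term $\tfrac14(K^2-(K')^2)=-x\cdot(K+x)$ being even by the same characteristic-class computation, now applied to the relative lift $x$). With the parity relation thus established relative to each $\SpinC$ summand, the comparison above applies verbatim; this verification, rather than any direct analysis of the higher differentials, is where the only real subtlety lies.
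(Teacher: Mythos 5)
Your proposal is correct and follows essentially the same route as the paper's proof: the differential on every page drops the Maslov grading by one while $\mathcal{D}^-_{2n}$ drops the filtration level $\vert E\vert$ by an even amount, and since the Maslov parity of $U^i\otimes[K,E]$ agrees with $\vert E\vert\pmod 2$ within each $\SpinC$ summand (via Proposition~\ref{prop:MaslovEqual}), the even-page differentials must vanish. Your more explicit treatment of the non-torsion case (using that the divisibility $d$ is even so that reduction mod $2$ is well defined on $\Q/d$) is a careful elaboration of a point the paper leaves implicit, but it is the same argument.
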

\begin{proof}
  Note first that all differentials on $E_r$ drop Maslov grading by
  $1$, and in particular change the Maslov grading by $1\pmod{2}$
  (see Lemma~\ref{lem:DiffDropsOne}).
  Looking at the expression of the grading $\gr$ on lattice homology,
  we see that the relative Maslov grading $\pmod{2}$ of any element
  $U^i\otimes[K,E]$ agrees with $|E|\pmod{2}$. Moreover, ${\mathcal
    {D}}^-_{k}$ drops $|E|$ by $k$. It follows from these observations
  and the identification of the Maslov gradings on the two theories
  (given by Proposition~\ref{prop:MaslovEqual}) that $D^-_{2n}$
  vanishes.
\end{proof}

\subsection{Module structures and the spectral sequence}
After establishing Theorem~\ref{thm:mainss}, we need a slight further
refinement in order to provide the proof of
Corollary~\ref{c:type2}.  

Suppose all the higher differentials on the spectral sequence
appearing in Theorem~\ref{thm:mainss} vanish. Even in this case we
cannot necessarily conclude that ${\mathbf{HF}}^-(Y_G)$ is computed by
lattice homology: rather, ${\mathbf{HF}}^-(Y_G)$ is determined up to
extensions. This allows us to identify the two theories only as vector
spaces over $\Field$, but not as $\Field [U]$-modules. In certain
cases, this indeterminacy can be removed by working with coefficients
in $\Field[U]/U^n$ for all $n$. In the rest of the section we spell
out the details of this observation.

The complex $({\mathcal C}^{[n]}({\mathcal H},\Lambda),{\mathcal
  D}_{[n]}^-)$ will denote the complex over $\Field[U]/U^n$ defined by
taking the complex defined in Proposition~\ref{prop:ssident},
$({\mathcal C}({\mathcal H},\Lambda),{\mathcal D}^-)$, and setting
$U^n=0$.  (Recall that we viewed $({\mathcal C}({\mathcal
  H},\Lambda),{\mathcal D}^-)$ as a module over $\Field[[U]]$ by
defining the action by $U$ to be multiplication by $U_1$. To view it
as a module over $\Field[U]/U^n$, we must set $U_1^n=0$.) The complex
$({\mathcal C}^{[n]}({\mathcal H},\Lambda),{\mathcal D}_{[n]}^-)$
naturally inherits a filtration from $({\mathcal C}({\mathcal
  H},\Lambda),{\mathcal D}^-)$.

\begin{lem}
  Fix any positive integer $n$, and consider the spectral sequence on
  $({\mathcal C}^{[n]}({\mathcal H},\Lambda),{\mathcal D}_{[n]}^-)$
  induced from its filtration. This spectral sequence has
  $E_1$-term isomorphic to $\CFaComb ^{[n]} (G)$. \qed
\end{lem}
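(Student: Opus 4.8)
The plan is to run the proof of Proposition~\ref{prop:ssident} essentially verbatim, with the coefficient ring $\Field[[U]]$ replaced by $\Field[U]/U^n$ throughout. First I would identify the $E_1$-term as a $\Field[U]/U^n$-module. The filtration $S$ on $({\mathcal C}^{[n]}({\mathcal H},\Lambda),{\mathcal D}_{[n]}^-)$ is inherited from the uncompleted surgery complex, so its associated graded is obtained from that of $({\mathcal C}^-({\mathcal H},\Lambda),{\mathcal D}^-)$ by setting $U^n=0$; in particular the summand attached to a sublink $M$ and to $\s\in\HH(L)$ is $\Chain^-({\mathcal H}^{L-M},\psi^M(\s))$ with $U^n=0$. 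By Lemma~\ref{lem:lspaces} there is a homotopy equivalence $\Chain^-({\mathcal H}^{L-M},\psi^M(\s))\simeq\Field[[U]]$, where the right-hand side sits in a single degree with vanishing differential. Applying the base change $-\otimes_{\Field[[U]]}\Field[U]/U^n$ I would conclude that each summand of the associated graded reduces, on homology, to $\Field[U]/U^n$, so that the $E_1$-term as a module is a product of copies of $\Field[U]/U^n$ indexed by the pairs $[K,E]$ --- exactly the module underlying $\CFaComb^{[n]}(G)$.

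The step I expect to need the most care --- the main obstacle --- is justifying that forming the $E_1$-page (i.e. taking homology of the associated graded) commutes with setting $U^n=0$. In general the functor $-\otimes_{\Field[[U]]}\Field[U]/U^n$ is not exact and need not commute with homology. The point, which I would make explicit, is that Lemma~\ref{lem:lspaces} provides a genuine chain \emph{homotopy equivalence}, not merely a quasi-isomorphism: the comparison maps together with the chain homotopies witnessing $fg\simeq\mathrm{id}$ and $gf\simeq\mathrm{id}$ are all $\Field[[U]]$-linear, and the relations defining a homotopy equivalence involve only composition and addition. Since $-\otimes_{\Field[[U]]}\Field[U]/U^n$ is additive it preserves all of these relations, hence carries the homotopy equivalence to a homotopy equivalence $\Chain^-({\mathcal H}^{L-M},\psi^M(\s))|_{U^n=0}\simeq\Field[U]/U^n$. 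This is precisely the feature that sidesteps the failure of exactness, and I would take care to cite that the equivalence of Lemma~\ref{lem:lspaces} is realized by the $\Field[[U]]$-linear surgery maps of \cite{ManOzs}.

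Finally I would identify the induced differential on $E_1$ with the boundary operator of $\CFaComb^{[n]}(G)$. In the proof of Proposition~\ref{prop:ssident} the two filtration-lowering components of ${\mathcal D}^-_1$ were shown to act, on the relevant summands, as multiplication by $U^{\alpha_v}$ and $U^{\beta_v}$, with the exponents identified as $\alpha_v=a_v[K,E]$ and $\beta_v=b_v[K,E]$ via Lemmas~\ref{lem:HFIndependence}, \ref{lem:LatticeIndependence}, and~\ref{lem:NegSurgeryIdent}. Since multiplication by a fixed power of $U$ commutes with the reduction $U^n=0$, the differential induced on the $E_1$-page of the truncated complex is simply the reduction modulo $U^n$ of the lattice boundary map $\partial$ of Equation~\eqref{eq:boundary}. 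As $\CFaComb^{[n]}(G)$ is by definition $\CFmComb(G)$ with $U^n=0$, this reduction is exactly its boundary map, which completes the identification and hence the lemma. The only genuinely new input beyond Proposition~\ref{prop:ssident} is the base-change argument of the preceding paragraph; the module and differential identifications then transport mechanically.
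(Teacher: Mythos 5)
Your proposal is correct, but it takes a genuinely different route from the paper at the one step that requires an argument. The paper identifies the $E_1$-term of the truncated complex via the universal coefficient theorem for the base change $-\otimes_{\Field[[U]]}\Field[U]/U^n$: in general $E_1(C')\cong \left(E_1(C)\otimes \Field[U]/U^n\right)\oplus \Tor\!\left(E_1(C),\Field[U]/U^n\right)$, and the $\Tor$-term vanishes because, by Lemma~\ref{lem:lspaces}, $H_*(C,{\mathcal D}^-_0)$ is a direct product of copies of $\Field[[U]]$ and hence torsion-free. You instead work at the chain level: you observe that Lemma~\ref{lem:lspaces} gives a genuine $\Field[[U]]$-linear chain homotopy equivalence of each associated graded summand with $\Field[[U]]$, and that any additive functor --- in particular $-\otimes_{\Field[[U]]}\Field[U]/U^n$ --- preserves homotopy equivalences, so each summand of the truncated associated graded computes $\Field[U]/U^n$ directly. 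The two arguments extract slightly different consequences from the same lemma (torsion-freeness of the homology versus a homotopy equivalence of complexes), and both are valid; yours avoids invoking universal coefficients at the cost of leaning on the homotopy-equivalence form of Lemma~\ref{lem:lspaces} and on the $U$-equivariance of the comparison maps, which you rightly flag as the point needing care. Your treatment of the induced differential --- that multiplication by $U^{a_v}$ and $U^{b_v}$ commutes with reduction mod $U^n$, so the identification of ${\mathcal D}^-_1$ with $\partial$ from Proposition~\ref{prop:ssident} (via Lemmas~\ref{lem:HFIndependence}, \ref{lem:LatticeIndependence}, and~\ref{lem:NegSurgeryIdent}) descends to the truncation --- matches what the paper leaves implicit.
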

\begin{proof}
 This is true because (thanks to Lemma~\ref{lem:lspaces}) the $E_1$
 term $H_*({\mathcal C}({\mathcal H},\Lambda),{\mathcal D}^-_0)$ is
 torsion free, as an $\Field[[U]]$-module. More explicitly, consider the
 filtered chain complex $C=({\mathcal C}({\mathcal
   H},\Lambda),{\mathcal D}^-)$. The associated spectral sequence has
 $E_1=H_*(C,{\mathcal D}^-_0)$, equipped with the differential induced
 by the ${\mathcal D}^-_0$-chain map ${\mathcal D}^-_1$.
  
  The filtered chain complex $C'=({\mathcal C}^{[n]}({\mathcal
    H},\Lambda),{\mathcal D}^-_{[n]})$ is gotten from $C$ by $C\otimes
  \Field[U]/U^n$.  In general, its $E_1$ term is computed by
\[  
E_1(C')=H_*(C,{\mathcal D}^-_0)\otimes \Field[U]/U^n)\oplus
  \Tor(H_*(C,{\mathcal D}^-_0)\otimes \Field[U]/U^n),
\]
 converging to $H_*(C')$.  In the case at hand, though,
 $H_*(C,{\mathcal D}^-_0)$ is a direct product of Heegaard Floer
 homology groups of 3-manifolds obtained as large surgeries on various
 components of our link, each of which, according to
 Lemma~\ref{lem:lspaces}, contributing a factor of $\Field[[U]]$.
 Since $\Tor(\Field[[U]],\Field[U]/U^n)=0$, we have that
 $\Tor(H_*(C,{\mathcal D}^-_0)\otimes \Field[U]/U^n)=0$. It follows
 that $E_1(C')=E_1(C)\otimes \Field[U]/U^n$, equipped with the
 differential induced from ${\mathcal D}^-_1$. But this $E_1$ term is
 precisely $\CFaComb^{[n]}(G)$.
\end{proof}

Now the version of Theorem~\ref{thm:mainss} for the $U^n=0$ truncated
theory has the following shape.
\begin{thm}
  \label{thm:UnSpecialized}
  Suppose that $G$ is a plumbing tree of spheres,
  and let  $Y_G$ be the corresponding $3$-manifold.  Then there is a spectral
  sequence $\{ E_i\} _{i=1}^{\infty}$ with the property that
\begin{itemize}
\item the $E_2$-term of the spectral sequence is isomorphic to the
  $U^n=0$-specialized lattice
homology ${{\HFaComb}^{[n]} }(G)$ and 
\item the spectral sequence converges to the $U^n=0$-specialized
  Heegaard Floer homology group ${\widehat{\rm {\mathbf {HF}}}} ^{[n]}
  (Y_G)$. \qed
\end{itemize}
\end{thm}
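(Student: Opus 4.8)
The plan is to run the argument that proved Theorem~\ref{thm:mainss} again, now for the complex obtained by setting $U^n=0$. The two ingredients are the identification of the $E_1$-page already supplied by the lemma immediately preceding the theorem, and a convergence statement for the truncated complex, which I would establish first.

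First I would verify that the spectral sequence on $({\mathcal C}^{[n]}({\mathcal H},\Lambda),{\mathcal D}_{[n]}^-)$ converges to $\HFaComb^{[n]}(Y_G)$. The point is that Theorem~\ref{thm:SurgeryTheorem} is, at the chain level, a $U$-equivariant homotopy equivalence between $C=({\mathcal C}^-({\mathcal H},\Lambda),{\mathcal D}^-)$ and the Heegaard Floer complex $\CFmComb(Y_G)$ over $\Field[[U]]$. Since homotopy equivalences over $\Field[[U]]$ are preserved under the base change $-\otimes_{\Field[[U]]}\Field[U]/U^n$, the truncated complex $C'=C\otimes_{\Field[[U]]}\Field[U]/U^n=({\mathcal C}^{[n]}({\mathcal H},\Lambda),{\mathcal D}_{[n]}^-)$ is homotopy equivalent to $\CFmComb(Y_G)\otimes_{\Field[[U]]}\Field[U]/U^n=\CFaComb^{[n]}(Y_G)$. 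Taking homology identifies $H_*(C')$ with $\HFaComb^{[n]}(Y_G)$, and the filtration $S$ on $C$ descends to $C'$, so the associated spectral sequence converges to this group.

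Next I would read off the $E_2$-page. The preceding lemma already identifies the $E_1$-term of the spectral sequence on $C'$ with $\CFaComb^{[n]}(G)$; the key input there is that $H_*(C,{\mathcal D}^-_0)$ is $U$-torsion free (being a product of copies of $\Field[[U]]$ by Lemma~\ref{lem:lspaces}), which kills the would-be $\Tor$ contribution and yields $E_1(C')=E_1(C)\otimes\Field[U]/U^n$ together with the induced differential. Under the identification of Proposition~\ref{prop:ssident}, the differential ${\mathcal D}^-_1$ on $E_1(C)$ is the lattice boundary map $\partial$ of $\CFmComb(G)$, so the induced differential on $E_1(C')$ is exactly the $U^n=0$ specialization of $\partial$. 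Hence $E_2(C')=H_*(\CFaComb^{[n]}(G))=\HFaComb^{[n]}(G)$, which is the claimed $E_2$-term.

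The step I expect to require the most care is the convergence claim, specifically the assertion that base-changing the surgery complex by $\Field[U]/U^n$ computes the $U^n=0$ specialized Heegaard Floer homology rather than a homology carrying spurious $\Tor$ corrections. This is clean precisely because the surgery theorem of \cite{ManOzs} is available as a $U$-equivariant homotopy equivalence, so I would be careful to invoke it in that chain-level form rather than merely as the homology isomorphism recorded in Theorem~\ref{thm:SurgeryTheorem}; the remaining bookkeeping (compatibility of the truncation with the filtration $S$ and with the $\SpinC$ splitting) is routine.
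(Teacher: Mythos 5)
Your proposal is correct and follows essentially the same route as the paper, which treats Theorem~\ref{thm:UnSpecialized} as an immediate consequence of the preceding lemma (the $\Tor$-vanishing argument identifying $E_1(C')$ with $\CFaComb^{[n]}(G)$) together with the $U^n=0$ specialization of the surgery theorem. Your explicit remark that the surgery theorem must be invoked in its chain-level (homotopy-equivalence) form, rather than merely as the homology isomorphism recorded in Theorem~\ref{thm:SurgeryTheorem}, correctly fills in the convergence step that the paper leaves implicit.
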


Theorem~\ref{thm:UnSpecialized} can be used to gain a little more
information about the $\Field[[U]]$-module structure on $\HHFm(Y_G)$
(in terms of lattice homology).  This improvement rests on the
following algebraic result.

\begin{lem}\label{lem:juennek}
  Suppose that $C$ and $C'$ are two Maslov-graded chain complexes over
  $\Field[[U]]$ whose homologies are finitely generated (as
  $\Field[[U]]$-modules).  If for all $n\geq 1$,
  $$H_*(C\otimes\Field[U]/U^n)\cong H_*(C'\otimes\Field[U]/U^n)$$
  as $\Field$-vector spaces, then it follows that 
  $H_*(C)\cong H_*(C')$ as $\Field[[U]]$-modules.
\end{lem}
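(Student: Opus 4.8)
The plan is to reduce the statement to a comparison of the numerical invariants furnished by the structure theorem for finitely generated modules over $\Field[[U]]$. Since $\Field[[U]]$ is a principal ideal domain (indeed a discrete valuation ring), the hypothesis that $H_*(C)$ is finitely generated means that, as a $\Field[[U]]$-module,
\[
H_*(C)\cong \Field[[U]]^{a}\oplus\bigoplus_{i=1}^{r}\Field[[U]]/U^{k_i},
\]
and similarly for $H_*(C')$; by the uniqueness part of the structure theorem, two such modules are isomorphic if and only if they share the same free rank $a$ and the same multiset of torsion exponents $\{k_i\}$. Hence it suffices to show that the single function $n\mapsto f_C(n):=\dim_\Field H_*(C\otimes\Field[U]/U^n)$ determines the pair $(a,\{k_i\})$, for then the hypothesis $f_C=f_{C'}$ immediately yields the desired isomorphism.

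First I would compute $f_C(n)$ in terms of $a$ and $\{k_i\}$. In the setting at hand $C$ is $U$-torsion-free (it is a free $\Field[[U]]$-module), so multiplication by $U^n$ is injective and there is a short exact sequence of complexes $0\to C\xrightarrow{U^n}C\to C\otimes\Field[U]/U^n\to 0$. Its long exact sequence in homology breaks into short exact sequences
\[
0\to H_*(C)\otimes\Field[U]/U^n\to H_*(C\otimes\Field[U]/U^n)\to \ker\bigl(U^n\colon H_{*-1}(C)\to H_{*-1}(C)\bigr)\to 0,
\]
where I have identified the cokernel of $U^n$ with $H_*(C)\otimes\Field[U]/U^n$ and the displayed kernel with $\Tor_1(H_{*-1}(C),\Field[U]/U^n)$. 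Summing dimensions over all gradings and evaluating the two outer terms on each summand—a free summand contributes $n$ to the tensor term and $0$ to the kernel term, while a torsion summand $\Field[[U]]/U^{k}$ contributes $\min(k,n)$ to each—yields
\[
f_C(n)=a\,n+2\sum_{i=1}^{r}\min(k_i,n).
\]

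Next I would invert this formula. Writing $\Delta(n)=f_C(n)-f_C(n-1)=a+2\,\#\{i:k_i\geq n\}$, the eventual slope $a=\lim_{n\to\infty}\Delta(n)$ recovers the free rank (the sum is empty once $n$ exceeds all $k_i$), and the second difference $\Delta(n)-\Delta(n+1)=2\,\#\{i:k_i=n\}$ recovers the multiplicity of each torsion exponent. Thus $f_C$ determines $(a,\{k_i\})$ completely, and the equality $f_C=f_{C'}$ forces $H_*(C)\cong H_*(C')$ as $\Field[[U]]$-modules.

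The dimension bookkeeping above is elementary once the short exact sequence is in place, so the step deserving the most care is the passage from the chain level to homology: one must ensure that $H_*(C\otimes\Field[U]/U^n)$ is governed by the module $H_*(C)$ through the universal-coefficients sequence, and this is exactly where $U$-torsion-freeness of $C$ enters. I would therefore verify that this hypothesis holds for the complexes to which the lemma is applied (the surgery complex $({\mathcal C}^-({\mathcal H},\Lambda),{\mathcal D}^-)$ and $\CFmComb(G)$, both free over $\Field[[U]]$). Finally, finite generation of the homology is what guarantees that only finitely many torsion summands occur, so that $f_C(n)$ is finite for every $n$ and the inversion terminates.
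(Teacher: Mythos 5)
Your proof is correct, and it takes a genuinely different route from the paper's. Both arguments start from the same two ingredients: the structure theorem for finitely generated modules over the PID $\Field[[U]]$, and the universal-coefficients identification of $H_*(C\otimes\Field[U]/U^n)$ as $(H_*(C)\otimes\Field[U]/U^n)\oplus\Tor(H_*(C),\Field[U]/U^n)$. Where you diverge is in how the numerical data is organized and inverted. The paper keeps the Maslov grading in play throughout: it decomposes $H_*(C)$ into graded cyclic pieces $M(d,k)$, packages the graded dimensions $\dim_\Field H_m(C\otimes\Field[U]/U^n)$ into a two-variable generating function $P_C(s,t)$, and proves that the functions $P_{M(d,k)}$ are linearly independent by an explicit rational-function computation. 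You instead discard the grading, work with the single sequence $f_C(n)=an+2\sum_i\min(k_i,n)$, and recover $a$ and the multiset $\{k_i\}$ by first and second finite differences. Your inversion is more elementary and uses strictly weaker input (only total dimensions), and it delivers exactly what the lemma states, namely the ungraded $\Field[[U]]$-module isomorphism type. What the paper's heavier bookkeeping buys is the graded refinement: it determines the coefficients $c_{d,k}$, i.e.\ the Maslov gradings of the generators of each cyclic summand, and this graded conclusion is what is actually invoked in the proof of Corollary~\ref{c:type2} (``the isomorphism of Maslov-graded $\Field[[U]]$-modules''). Your argument would need the graded input $\dim_\Field H_m(\cdot\otimes\Field[U]/U^n)$ and a grading-by-grading version of the same difference computation to recover that refinement; this is a routine extension but worth noting. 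Finally, your explicit flagging of the $U$-torsion-freeness of $C$ needed for the universal-coefficients step is a point the paper leaves implicit, and your check that it holds for the surgery complex and for $\CFmComb(G)$ is the right thing to do.
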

\begin{proof}
  Fix a rational number $d$ and an integer $k>0$.
  Let $M(d,k)$ denote the Maslov-graded $\Field[[U]]$-module with the
  following two properties:
  \begin{itemize}
  \item $M(d,k)\cong \Field[U]/U^k$ as an $\Field[[U]]$-module, and
  \item the generator of $M(d,k)$ has Maslov grading $d$ (i.e. the
    whole module is supported in Maslov gradings between $d$ and
    $d-2k$).
  \end{itemize}
  We extend the definition of $M(d,k)$ to $k=0$ to be
  the Maslov-graded $\Field[[U]]$-module
  with the following two properties:
  \begin{itemize}
  \item $M(d,0)\cong \Field[[U]]$ as an $\Field[[U]]$-module, and 
  \item the generator of $M(d,0)$ has Maslov grading $d$ (i.e. the
    whole module is supported in Maslov gradings $\leq d$).
  \end{itemize}

  Since $\Field[[U]]$ is a principal ideal domain, the
  finitely-generated, Maslov-graded $\Field [[U]]$-module $H_*(C)$
  splits as the direct sum of modules of the form $M(d,k)$; i.e.
  \[
H_*(C)\cong \bigoplus_{d\in\Q,k\in\{0,\dots,\infty\}}
M(d,k)^{c_{d,k}},
\]
 where $c_{d,k}$ is a collection of non-negative integers, only
 finitely many of which are positive.

  Our goal is to show that the collection of $\Field$-vector spaces
  $\{H_*(C\otimes \Field[U]/{U^n})\}_{n=0}^{\infty}$ uniquely
  determines the isomorphism type of $H_*(C)$ as an
  $\Field[[U]]$-module, i.e. it uniquely determines the coefficients
  $\{ c_{d,k}\} _{d,k}$.
 
  This statement follows from an application of the universal coefficients
  theorem,
  stating that
  \begin{equation}
    \label{eq:UniversalCoefficients}
    H_*(C\otimes \Field[U]/U^n)\cong (H_*(C)\otimes \Field[U]/U^n)
    \oplus \Tor_{*-2k-1}(H_*(C),\Field[U]/U^n),
  \end{equation}
  where the perhaps unfamiliar shift in grading (of $2k+1$, rather
  than simply $1$) on the $\Tor$ results from the fact that the action
  by $U$ shifts Maslov grading by $2$.
  
  We find it convenient to encode the input data in terms of a
  two-variable generating function
  \[
P_{C}(s,t)=\sum_{n\geq 0, m} \dim _{\Field }H_{m}(C\otimes
\Field[U]/U^n) s^n t^{-m}.
\]
  By
  Equation~\eqref{eq:UniversalCoefficients}, 
\[
P_{H_*(C)}=\sum_{d,k}
  c_{d,k}\cdot P_{M(d,k)},
\]
 where
  \[
P_{M(d,k)}= \sum_{n\geq 0,m} \dim _{\Field} (M_{m}(d,k)\otimes
\Field[U]/U^n) s^n t^{-m} +
\]
\[
  t^{2k+1}\sum_{n\geq 0,m} \dim _{\Field }(M_{m}(d,k)\otimes
  \Field[U]/U^n) s^n t^{-m}.
\]
  The lemma is proved once we show that the functions $P_{M(d,k)}$ are
  linearly independent (over $\Z$).
  This in turn follows form a straightforward calculation:
  \[P_{M(d,k)}=t^{-d}
  \left(\frac{\sum_{i=0}^{k-1} (st^2)^i}{1-s}+
    t^{2k+1}\frac{\sum_{i=0}^{k-1} s^i}{1-st^2}\right)=
  t^{-d}
  \left(\frac{1-(st^2)^k + t^{2k+1} (1-s^k)}{(1-s)(1-st^2)}\right) ,
  \]
  thought of as a rational function in $s$; and also
  \[
  P_{M(d,0)}=\frac{t^{-d}}{(1-s)(1-st^2)}.
  \]
  Note that $(1-s)(1-st^2) P_{M(d,k)}$ is a degree-$k$ polynomial in
  $s$. When $k>0$, the coefficient of $s^k$ is
  $-t^{-d}(t^{2k}+t^{2k+1})$, while at $k=0$, we get the constant (in
  $s$) polynomial $t^{-d}$. The linear independence of the
  $P_{M(d,k)}$ follows immediately.
\end{proof}

\begin{rem}\label{rem:MaslovNonTorsion}
  A version of Lemma~\ref{lem:juennek} applies when $C$ and $C'$ are
  two relatively $\Zmod{d}$ Maslov-graded chain complexes, as well. In
  that case, the generating function $P_{M}(s,t)$ is defined over
  $\Z[\Zmod{d}][[s]]$; i.e. $t$ is a primitive $n^{th}$ root of unity.
\end{rem}

\begin{cor}
  \label{cor:Equals}
  Suppose that all higher differentials $D_i^-$ vanish for $i\geq 2$
  in the spectral sequence associated to $({\mathcal C}^-({\mathcal
    H},\Lambda),{\mathcal D}^-)$. Suppose that the same holds for all
  the truncated spectral sequences $({\mathcal C}^{[n]}({\mathcal
    H},\Lambda),{\mathcal D}^-_{[n]})$.
  Then, $\HFmComb(G)$ and $\HHFm(Y_G)$ are isomorphic as $\Field[[U]]$-modules.
\end{cor}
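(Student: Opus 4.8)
The plan is to deduce the module isomorphism from the algebraic input of Lemma~\ref{lem:juennek}, applied to the two Maslov-graded $\Field[[U]]$-complexes $C=\CFmComb(G)$ and $C'=({\mathcal C}^-({\mathcal H},\Lambda),{\mathcal D}^-)$. Their homologies are $H_*(C)=\HFmComb(G)$ and, by Theorem~\ref{thm:SurgeryTheorem}, $H_*(C')=\HHFm(Y_G)$. Both homologies are finitely generated over $\Field[[U]]$: on the lattice side this is Proposition~\ref{prop:FGlattice}, and on the Heegaard Floer side it is the standard finite generation of $\HHFm$ of a closed three-manifold. Since both the complexes and the spectral sequence split along $\SpinC$ structures (Theorem~\ref{thm:mainss}), it suffices to argue one $\SpinC$ summand at a time, using the absolute $\Q$-grading for torsion structures and the relative $\Q/d$-grading (with the root-of-unity generating function of Remark~\ref{rem:MaslovNonTorsion}) for non-torsion ones.

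The substance of the argument is to verify the hypothesis of Lemma~\ref{lem:juennek}: that for every $n\geq 1$ one has a \emph{graded} $\Field$-vector space isomorphism $H_*(C\otimes\Field[U]/U^n)\cong H_*(C'\otimes\Field[U]/U^n)$. Fix $n$ and view $C'\otimes\Field[U]/U^n=({\mathcal C}^{[n]}({\mathcal H},\Lambda),{\mathcal D}^-_{[n]})$ as a filtered complex. By Theorem~\ref{thm:UnSpecialized} its spectral sequence has $E_2$-term $\HFaComb^{[n]}(G)=H_*(C\otimes\Field[U]/U^n)$ and converges to $\widehat{\mathbf{HF}}^{[n]}(Y_G)=H_*(C'\otimes\Field[U]/U^n)$. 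By hypothesis every higher differential $D^-_i$ with $i\geq 2$ vanishes on this truncated spectral sequence, so it degenerates at $E_2$ and $E_\infty\cong E_2$. Over the field $\Field$ the passage to the associated graded preserves the dimension in each Maslov grading, and the identification of the $E_2$-term is grading-preserving (Proposition~\ref{prop:MaslovEqual}, together with the fact that all differentials drop the Maslov grading by one); this yields the required graded isomorphism.

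With the hypothesis checked, Lemma~\ref{lem:juennek} (respectively its relatively graded analogue of Remark~\ref{rem:MaslovNonTorsion} in the non-torsion case) immediately gives $H_*(C)\cong H_*(C')$ as $\Field[[U]]$-modules, i.e. $\HFmComb(G)\cong\HHFm(Y_G)$.

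The main obstacle is conceptual, not computational: degeneration of the untruncated spectral sequence at $E_2$ only identifies the associated graded of $\HHFm(Y_G)$ with $\HFmComb(G)$, which over $\Field[[U]]$ leaves an extension problem and determines the two sides only as $\Field$-vector spaces. The whole point is that the family of truncated graded dimensions---encoded in the generating function $P_C(s,t)$---is a finer invariant that recovers the $\Field[[U]]$-module type, which is exactly the content of Lemma~\ref{lem:juennek}; the only real care needed is the Maslov-grading bookkeeping, particularly the relative $\Q/d$ case.
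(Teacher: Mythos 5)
Your proposal is correct and follows essentially the same route as the paper: the paper's proof of this corollary is simply ``this follows quickly from Lemma~\ref{lem:juennek},'' and your argument is the natural unwinding of that citation, verifying the hypothesis of the lemma via the degeneration of the truncated spectral sequences of Theorem~\ref{thm:UnSpecialized} and invoking Remark~\ref{rem:MaslovNonTorsion} for the non-torsion $\SpinC$ summands. The grading bookkeeping you supply (that the $E_2=E_\infty$ identification must be a graded isomorphism so that the generating functions $P_C(s,t)$ agree) is exactly the point the paper leaves implicit.
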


\begin{proof}
  This follows quickly from Lemma~\ref{lem:juennek}.
\end{proof}

\section{Graphs of type-2}
\label{sec:appl}

The proof of Corollary~\ref{c:type2} relies on the following simple
corollary of the existence of the surgery triangle for lattice
homology.  (The exact sequence we will use in the proof has been
described by Greene \cite[Theorem~3.1]{Josh}, and independently by
N\'emethi \cite{latticetriangle}; see also the Appendix for a version
adapted to the present notational conventions.)
\begin{thm}\label{thm:type}
Suppose that the plumbing tree $G$ is of type-$k$. Then $\HFmComb _q (G)=0$
for $q>k$. 
\end{thm}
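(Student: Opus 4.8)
The plan is to induct on $k$, reducing the type by one at each stage by means of the surgery exact triangle for lattice homology (Greene--N\'emethi, in the form recalled in the Appendix). It is convenient to prove the statement for \emph{forests}, with type-$k$ meaning that the total number of framing changes needed to reach a componentwise rational forest is $k$; this is the generality the induction produces, and it contains the case of trees. The base case $k=0$ asserts that a (componentwise) rational forest has $\HFmComb_q(G)=0$ for $q>0$, i.e.\ that its lattice homology is supported in $\delta$-grading $0$. This is N\'emethi's computation for rational graphs \cite{lattice} (compatibly with the fact that such graphs bound $L$-spaces), together with the K\"unneth principle for disjoint unions.

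For the inductive step, assume the result for all type-$(k-1)$ forests and let $G$ be type-$k$, with distinguished vertices $v_{i_1},\dots,v_{i_k}$ whose framings may be altered to produce a rational forest $G^{*}$. Fix $v=v_{i_1}$, write $G(n)$ for the graph obtained from $G$ by setting the framing at $v$ equal to $n$ (so $G=G(m_v)$), and let $G\setminus v$ denote the forest obtained by deleting $v$ together with its incident edges. Two standard properties of rational graphs \cite{lattice, nemethi-ar} drive the reduction. First, decreasing a framing preserves rationality; hence for all sufficiently negative $n$ the graph $G(n)$ becomes rational after altering only the remaining $k-1$ distinguished framings, so $G(n)$ is type-$(k-1)$ for $n\ll 0$. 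Second, a full subgraph of a rational graph is rational; applying this to $G^{*}$ shows that $G^{*}\setminus v$ is rational, and since $G\setminus v$ differs from $G^{*}\setminus v$ only at the $k-1$ distinguished vertices other than $v$, the forest $G\setminus v$ is type-$(k-1)$. By the inductive hypothesis, $\HFmComb_q(G(n))=0$ for $n\ll 0$ and $q\ge k$, and $\HFmComb_q(G\setminus v)=0$ for $q\ge k$.

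It remains to transport the vanishing in $\delta$-gradings $q>k$ from $G(n)$ with $n\ll 0$ up to $G(m_v)=G$, one framing at a time. The surgery exact triangle along $v$ relates the $n$-, $(n+1)$-, and $\infty$-surgeries, the last of which is $G\setminus v$ (independent of $n$); in the $\delta$-grading it reads
\[
\cdots \to \HFmComb_q(G\setminus v)\to \HFmComb_q(G(n))\to \HFmComb_q(G(n+1))\xrightarrow{\ \partial\ }\HFmComb_{q-1}(G\setminus v)\to\cdots,
\]
so that the $\infty$-term enters with a downward shift of one in the grading. Suppose inductively that $\HFmComb_q(G(n))=0$ for all $q\ge k+1$, the case $n\ll 0$ being settled above. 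For such $q$, exactness at $\HFmComb_q(G(n+1))$ forces $\partial$ to be injective, embedding $\HFmComb_q(G(n+1))$ into $\HFmComb_{q-1}(G\setminus v)$; but $q-1\ge k$, so the target vanishes and $\HFmComb_q(G(n+1))=0$. After finitely many steps this gives $\HFmComb_q(G)=0$ for all $q>k$, completing both inductions.

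The main obstacle is pinning down the grading in the exact triangle. A naive triangle in which all three terms sat in the same $\delta$-grading would force $\HFmComb_k(G)=0$ as well, which is false (already for the single $(+1)$-framed vertex, which is type-$1$ with $\HFmComb_1\cong\Field[[U]]$, cf.\ the first example in \ref{ex:peldak}). The correct statement must therefore carry the $\infty$-summand $G\setminus v$ with precisely a single downward shift of the $\delta$-grading, so that a class in grading $q\ge k+1$ is paired only against $G\setminus v$ in grading $q-1\ge k$, where the inductive vanishing applies, while grading $k$ is left untouched. Verifying this shift --- equivalently, matching the cobordism-induced grading shift of the triangle with the contribution of the vertex $v$ to $|E|$ --- is the one genuinely delicate point; the two rationality observations and the $L$-space base case are routine once it is in hand.
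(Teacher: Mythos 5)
Your proposal is correct and follows essentially the same route as the paper's proof: induction on the type $k$, with the base case being N\'emethi's vanishing for rational graphs, and the inductive step transporting the vanishing from $G(n)$ with $n\ll 0$ (which is type-$(k-1)$) back up to $G$ one framing at a time via the graded surgery long exact sequence, using that $G\setminus v$ is type-$(k-1)$ and that the $\HFmComb_{\ast}(G\setminus v)$ term enters with a $\delta$-grading shift of one. Your explicit isolation of the two rationality facts and of the grading shift in the triangle (which the paper establishes in Corollary~\ref{cor:LongExactLattice}) only makes the argument more careful, not different.
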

\begin{proof}
The proof of the theorem proceeds by induction on $k$. For $k=0$
(i.e. if $G$ is rational), the claim follows from
\cite[Proposition~4.1.4.]{lattice}. Suppose now that $G$ is of
type-$(k+1)$ and assume that the claim of the theorem holds for graphs
of type at most $k$.  Let $v$ be a vertex of $G$ from the set $\{
v_{i_1}, \ldots , v_{i_{k+1}}\}$ appearing in
Definition~\ref{def:type} of the type of $G$. Then, by the same
definition, $G-v$ is of type-$k$.  Let $G_{-n}$ denote the graph we
get from $G$ by decreasing the framing of the chosen $v$ by $n\in
{\mathbb {N}}$. If $n$ is sufficiently large, then (again by
Definition~\ref{def:type}) the graph $G_{-n}$ is of type-$k$. Fix now
$q>k+1$ and consider the following portion of the long exact sequence
associated to $(G_{-n}, v)$
(cf. Corollary~\ref{cor:LongExactLattice}):
\[
\ldots \to \HFmComb _q (G_{-n})\to \HFmComb _q (G_{-n+1})\to \HFmComb _{q-1} (G-v) \to \ldots 
\]
By the inductive assumption, the first and the third terms vanish,
hence by exactness so does the middle term. Iterating this argument
until we get the given framing on $v$, the result follows and shows
that $\HFmComb _q(G)=0$ for $q>k+1$.
\end{proof}
In a similar manner, we get
\begin{thm}\label{thm:TruncatedType}
If the plumbing tree $G$ is of type-$k$ then
$\HFaComb ^{[n]}_q(G)=0$ for all $q>k$ and all $n\in \N$. \qed 
\end{thm}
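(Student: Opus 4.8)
The plan is to mimic the proof of Theorem~\ref{thm:type} almost verbatim, replacing the lattice homology groups $\HFmComb_q$ by their $U^n=0$-specialized counterparts $\HFaComb^{[n]}_q$ throughout. The essential ingredient is that the surgery exact triangle in lattice homology (Corollary~\ref{cor:LongExactLattice}, proved in the Appendix following Greene and N\'emethi) is built from a short exact sequence of chain complexes whose maps $\mathbb{A}$ and $\mathbb{B}$ are defined combinatorially and respect the $U$-action; hence after setting $U^n=0$ one still obtains a short exact sequence of the truncated complexes and therefore a long exact sequence relating the groups $\HFaComb^{[n]}_q(G_{-n'})$, $\HFaComb^{[n]}_q(G_{-n'+1})$, and $\HFaComb^{[n]}_{q-1}(G-v)$.

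First I would record the base case $k=0$: when $G$ is rational, the statement $\HFaComb^{[n]}_q(G)=0$ for $q>0$ follows from the corresponding fact for $\HFmComb_q(G)$ (which is \cite[Proposition~4.1.4.]{lattice}) together with the universal coefficients relationship between $\HFmComb$ and its $U^n=0$ specialization. Concretely, since $\HFmComb_q(G)=0$ for $q>0$, both the tensor and the $\Tor$ terms in the universal coefficients formula vanish in $\delta$-grading $q>0$, so $\HFaComb^{[n]}_q(G)=0$ as well; here one uses that the $\delta$-grading is preserved by the whole construction. Then, assuming the result for all graphs of type at most $k$, I would take $G$ of type-$(k+1)$, choose a vertex $v$ from the distinguishing set so that $G-v$ is of type-$k$, and observe that decreasing the framing of $v$ sufficiently (to form $G_{-n'}$) yields a graph of type-$k$ as well.

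The inductive step then runs identically to Theorem~\ref{thm:type}: for $q>k+1$ the portion of the truncated long exact sequence
\[
\cdots \to \HFaComb^{[n]}_q(G_{-n'}) \to \HFaComb^{[n]}_q(G_{-n'+1}) \to \HFaComb^{[n]}_{q-1}(G-v) \to \cdots
\]
has vanishing first and third terms by the inductive hypothesis (the first because $G_{-n'}$ is of type-$k$, the third because $G-v$ is of type-$k$ and $q-1>k$), so the middle term vanishes by exactness; iterating from the very negative framing back up to the given framing on $v$ yields $\HFaComb^{[n]}_q(G)=0$ for $q>k+1$.

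The one point requiring genuine care --- and the step I expect to be the main obstacle --- is verifying that the surgery exact triangle descends cleanly to the $U^n=0$ theory. This amounts to checking that the short exact sequence $0\to \CFaComb^{[n]}_i(G-v)\to \CFaComb^{[n]}_i(G)\to \CFaComb^{[n]}_i(G_{+1})\to 0$ remains exact after the tensor with $\Field[U]/U^n$, i.e. that no new kernel or cokernel is introduced by the truncation. Since the maps $\mathbb{A}$ and $\mathbb{B}$ are $U$-equivariant and the middle complex decomposes as a mapping-cone-type extension of the two outer ones, exactness is preserved under $\otimes\,\Field[U]/U^n$ provided the relevant modules are $U$-torsion-free in the appropriate sense; this is exactly the kind of flatness observation already exploited in the proof of Theorem~\ref{thm:UnSpecialized} and in Lemma~\ref{lem:juennek}, so I would cite that mechanism rather than redo it. Once this is in hand, the rest is the routine dévissage above, and the proof closes exactly as for Theorem~\ref{thm:type}.
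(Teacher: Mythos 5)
Your overall strategy coincides with the paper's: the paper offers no separate argument for Theorem~\ref{thm:TruncatedType} beyond the phrase ``in a similar manner,'' relying on the truncated long exact sequence recorded at the end of the Appendix, which is obtained exactly as you describe --- by tensoring the short exact sequence \eqref{eq:minusses} with $\Field[U]/U^n$ (exactness is preserved because the chain groups are direct products of copies of $\Field[[U]]$, hence torsion-free over a PID, so the relevant $\Tor$ of the quotient complex vanishes) --- and then rerunning the induction of Theorem~\ref{thm:type} verbatim. Your inductive step is correct as written.

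The one place where your justification is insufficient is the base case $k=0$ at $\delta$-grading $q=1$. The universal coefficients theorem in homological degree $q$ reads
\[
\HFaComb^{[n]}_q(G)\cong \bigl(\HFmComb_q(G)\otimes \Field[U]/U^n\bigr)\oplus \Tor\bigl(\HFmComb_{q-1}(G),\Field[U]/U^n\bigr),
\]
so the vanishing of $\HFmComb_q(G)$ for all $q>0$ does \emph{not} by itself kill the $\Tor$ term when $q=1$: that term is $\Tor(\HFmComb_{0}(G),\Field[U]/U^n)$, and $\HFmComb_0(G)$ is certainly nonzero. You need the additional input that for a rational graph $\HFmComb_0(G,\s)\cong\Field[[U]]$ for every $\SpinC$ structure (N\'emethi's computation), hence is torsion-free and the $\Tor$ term vanishes. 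That this is not an empty worry is exactly the content of the Remark following the theorem: for negative definite type-$k$ graphs the sharpening $\HFmComb_q=0$ for $q\geq k$ does \emph{not} pass to the truncated theory, precisely because $\Tor(\HFmComb_{k-1},\Field[U]/U^n)$ can be nonzero when $\HFmComb_{k-1}$ has $U$-torsion. With the freeness of $\HFmComb_0$ of a rational graph supplied, your proof closes.
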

\begin{rem}
For $G$ negative definite, Theorem~\ref{thm:type} can be sharpened to
$q\geq k$.  This strengthening, however, does not hold for the
truncated theories $\HFaComb ^{[n]}(G)$, hence the negative definite
assumption does not improve Theorem~\ref{thm:TruncatedType} in this sense.
\end{rem}

From these results the proof of the corollary is a simple
exercise:
\begin{proof}[Proof of Corollary~\ref{c:type2}]
  Suppose that $G$ is a plumbing tree (or forest) of type-2 and
  consider the spectral sequence provided by
  Theorem~\ref{thm:mainss}. By Proposition~\ref{prop:d2n} we have that
  $E_2=E_3$, and since by Theorem~\ref{thm:type} the homology (and so
  the $E_2$-table of the spectral sequence) concentrates on the rows
  with $\vert E \vert$-gradings $0,1,2$, the higher differentials
  point from or to vanishing groups, implying that ${\mathcal
    {D}}^-_i=0$ for all $i\geq 3$. This means that $E_2=E_{\infty}$,
  hence by Theorem~\ref{thm:mainss} the lattice and Heegaard Floer
  homologies coincide, as vector spaces over $\Field$. To get the
  corresponding isomorphism as $\Field[[U]]$-modules, we use the
  version of the spectral sequence over $\Field[U]/U^n$,
  Theorem~\ref{thm:UnSpecialized} cf. Corollary~\ref{cor:Equals}. For
  torsion $\SpinC$ structures, the isomorphism of Maslov-graded
  $\Field[[U]]$-modules follows now from Lemma~\ref{lem:juennek}. For
  non-torsion $\SpinC$ structures, we appeal to the modification of
  the proof of Lemma~\ref{lem:juennek} described in
  Remark~\ref{rem:MaslovNonTorsion}.
\end{proof}

\section{Appendix: the exact sequence}
\label{sec:app}
 
For completeness, in this final section we prove the exact sequences
in lattice homology used above.  These results could be derived from
\cite{Josh, latticetriangle}, but we find it convenient to include
this proof here, as it follows the conventions and formalism
introduced in Section~\ref{sec:review}.

Let $G$ be a plumbing graph, and $v\in\Vertices(G)$ be a distinguished
vertex with framing $m_v$. $G-v$ will denote the graph obtained by
omitting the vertex $v$.  We define the \emph{extension map} $ \Phi
_v\colon \CFmComb(G-v)\to \CFmComb(G)$ by the formula
\begin{equation}\label{eq:phiformula}
\Phi_v([K,E]) = \sum_{p\equiv
m_v\pmod{2}} [(K,p),E].
\end{equation}
On the right-hand-side we write characteristic vectors for $G$ as
pairs $(K,p)$, where $K$ is a characteristic vector for $G-v$, and $p$
is the evaluation of the characteristic vector on the distinguished
vertex $v$.  Since any component of $\Phi _v ([K,E])$ determines
$[K,E]$, it is easy to see that the above formula indeed provides a
function on $\CFmComb$ (meaning that any component of $\Phi _v(x)$ for
a possibly infinite sum $x$ has coefficient in $\Field [[U]]$). In
fact, the above principle also shows that $\Phi _v$ is injective.

\begin{lem}
\label{lemma:PhiIsChain}
For each vertex $v\in \Vertices(G)$, the map $\Phi_v$ is a chain map.
\end{lem}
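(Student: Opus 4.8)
The plan is to verify the chain-map identity $\partial\circ\Phi_v=\Phi_v\circ\partial$ directly on a single generator $[K,E]$ of $\CFmComb(G-v)$, where the left-hand $\partial$ is the boundary on $\CFmComb(G)$ and the right-hand one is the boundary on $\CFmComb(G-v)$. Since $\Phi_v$ and both boundary maps are $U$-equivariant and linear (and well behaved on the infinite sums, as noted just before the statement), checking the identity on generators is enough. The decisive structural feature I would exploit is that $E\subset\Vertices(G-v)$, so the distinguished vertex $v$ never belongs to $E$; consequently every subset $I\subset E$ entering the $G$-weight formula \eqref{eq:gweight} also avoids $v$.

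The key step is to show that all the relevant weights are insensitive to the value on $v$. Writing a characteristic vector on $G$ as a pair $(K,p)$ as in \eqref{eq:phiformula}, I would first observe that for every $I\subset E$ one has $f([(K,p),I])=f([K,I])$, the latter computed in $G-v$: the right-hand side of \eqref{eq:gweight} involves only the values $K(u)$ for $u\in I$ and the intersection numbers $E_u\cdot E_{u'}$ for $u,u'\in I$, and since $v\notin I$ the value $p$ never appears, while the intersection numbers agree in the two graphs because $M_{G-v}$ is the submatrix of $M_G$ obtained by deleting the row and column of $v$. Taking minima, this gives $g([(K,p),E])=g([K,E])$ and $g([(K,p),E-w])=g([K,E-w])$, hence $a_w[(K,p),E]=a_w[K,E]$ for every $w\in E$, independently of $p$. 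The same holds for $b_w$: in the expression for $B_w$ the term $(K,p)(w)$ equals $K(w)$ since $w\neq v$, and the $g$-term is evaluated over $E-w$, a subset still missing $v$, on the vector $(K,p)+2w^*$ whose restriction to $\Vertices(G-v)$ is exactly the $G-v$ vector $K+2w^*$.

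With the weights identified, I would match the two differentials term by term. Applying $\partial$ to $\Phi_v([K,E])=\sum_{p\equiv m_v}[(K,p),E]$ via \eqref{eq:boundary}, the $A$-type terms assemble directly into $\sum_{w\in E}U^{a_w[K,E]}\Phi_v([K,E-w])$. For the $B$-type terms the only point to watch is the $v$-coordinate: using the convention $(K,p)+2w^*$ evaluates on $v$ to $p+2(E_v\cdot E_w)$, so the value on $v$ is shifted by twice the number $E_v\cdot E_w\in\{0,1\}$ recording whether $w$ is adjacent to $v$. Because this shift is even it preserves the congruence $p\equiv m_v\pmod 2$ and merely reindexes the parity-fixed sum over $p$ bijectively, so these terms assemble into $\sum_{w\in E}U^{b_w[K,E]}\Phi_v([K+2w^*,E-w])$. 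Comparing the total with $\Phi_v(\partial[K,E])$ yields the claimed equality.

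The main obstacle — in fact the only subtlety — is precisely this bookkeeping on the $v$-coordinate under the shift $K\mapsto K+2w^*$: one must confirm that the infinite sum $\sum_{p\equiv m_v}$ produced on each side is carried to the other by an invertible reparametrization of $p$, rather than collapsing or overcounting. Once I record that the induced shift on $v$ is always even, hence preserves the parity class and is a bijection of the index set, everything lines up and the rest is the routine term-by-term comparison sketched above.
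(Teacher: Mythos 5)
Your proof is correct and follows essentially the same route as the paper's: the heart of both arguments is the observation that for $E\subset\Vertices(G-v)$ every $I\subset E$ avoids $v$, so the weights $f$, hence $g$, $a_w$, and $b_w$, computed in $G$ for $[(K,p),E]$ agree with those computed in $G-v$ for $[K,E]$ independently of $p$. Your additional bookkeeping on the $v$-coordinate under $K\mapsto K+2w^*$ (an even shift, hence a parity-preserving bijection of the index set of the sum over $p$) is a point the paper leaves implicit, and it is handled correctly.
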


\begin{proof}
  This follows immediately from the fact that for any $E\subset G-v$,
  the $(G-v)$-weight $f_{G-v}[K,E]$ of the pair $[K,E]$ agrees with
  the $G$-weight $f_G[(K,p),E]$ of the pair $[(K,p),E]$ where $p$ is
  any integer with the allowed parity. (Here $f_{G-v}$ and $f_G$ refer
  to the function defined in Equation~\eqref{eq:gweight} with the
  respective graphs $G-v$ and $G$.) This implies that the
  corresponding functions $g_{G-v}$ and $g_G$ of minimal weights also
  coincide, and since the boundary maps are determined by these
  minimal weight functions, the result follows at once.
\end{proof}

Let $G_{+1}(v)$ denote the graph $G$ with the same framings, except on
the vertex $v$ we consider $m_v+1$ instead of $m_v$. Define the map
$\Psi_v\colon \CFmComb(G) \to \CFmComb(G_{+1}(v))$
by the formula 
\begin{equation}\label{eq:psivdef}
\Psi_v[(K,p),E]=\sum _{m=-\infty}^{\infty}U^{s_m}\otimes [(K,p+2m-1), E],
\end{equation}
where $s_m=g_{G_{+1}(v)}[(K,p+2m-1),E]-g_G[(K,p),E]+\frac{m(m-1)}{2}$.
It is easy to see that when $v\not \in I\subset E$ the equality
$f_{G_{+1}(v)}[(K,p+2m-1),I]=f_G[(K,p),I]$ holds, hence
$s_m=\frac{m(m-1)}{2}\geq 0$ in this case.  If $v\in I\subset E$ then
$f_{G_{+1}(v)}[(K,p+2m-1), I]-f_G[(K,p),I]$ is at most $\vert m\vert $
in absolute value, hence after addig $\frac{m(m-1)}{2}$ to it, the
result will be nonnegative.  In conclusion, $s_m$ is nonnegative for
any $(K,p)$ and $m$.  

Once again, a short argument is needed to confirm that the above
formula defines a function on $\CFmComb$, that is, for an 
infinite sum $\sum _{i\in \Z }U^{m_i}[K_i,E_i]$ all coordinates of the
image admit a coefficient in $\Field [[U]]$. This property follows from 
the fact that if $p+2m$ is fixed then the value $s_m$ converges to infinity
as $m\to \pm \infty$, implying that at most finitely many terms $[(K,p+2m-1),E]$
with $p+2m$ fixed can have a given $U$-power in the image. 
\begin{lem}\label{lem:psichainmap}
The map $\Psi _v $ is a chain map.
\end{lem}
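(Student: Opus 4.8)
The plan is to verify $\Psi_v\circ\partial_G=\partial_{G_{+1}(v)}\circ\Psi_v$ by fixing a generator $[(K,p),E]$ of $\CFmComb(G)$, applying both composites, and comparing the coefficient of every output generator. Both composites delete a single vertex $w\in E$ (through the boundary map) and translate the $v$-evaluation of the characteristic vector by an odd amount (through $\Psi_v$), so I would organize the comparison according to $w$, and within that according to the two terms of $\partial$ coming from $A_v$ and $B_v$. The one input I need is the elementary relation already isolated in the proof of Lemma~\ref{lemma:PhiIsChain}: if $v\notin I$ then $f_{G_{+1}(v)}[(K,p'),I]=f_G[(K,p),I]$, independently of the $v$-coordinate, while if $v\in I$ then $f_{G_{+1}(v)}[(K,p+2m-1),I]-f_G[(K,p),I]=m$. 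In particular $g_{G_{+1}(v)}$ and $g_G$ agree on any subset-family avoiding $v$, which is what keeps the bookkeeping of $U$-exponents manageable. Throughout I will use that adding $2w^*$ changes $K(u)$ by $2(w\cdot u)$, so that the $B_v$-term shifts the $v$-evaluation by $2(v\cdot v)$ — a quantity that is $2m_v$ in $G$ but $2(m_v+1)$ in $G_{+1}(v)$.

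For a deleted vertex $w\neq v$ I expect the two operations to commute term-by-term in the summation index $m$. The point is that the shift $2w^*$ appearing in the $B_w$-term alters the restriction of $K$ to $G-v$ and alters the $v$-evaluation by amounts $2(w\cdot u)$ and $2(w\cdot v)$ that are \emph{identical} in $G$ and in $G_{+1}(v)$, since changing the framing of $v$ only affects $v\cdot v$. Hence on both sides the $m$-th term lands on the same generator, and a direct telescoping of the defining formulas for $a_w$, $b_w$ and $s_m$ — in which the intermediate values of $g_G$ and $g_{G_{+1}(v)}$ cancel in pairs — shows the exponents coincide. The sub-case $v\notin E$ (where $s_m$ degenerates to $\frac{m(m-1)}{2}$ and the two $g$-functions are literally equal) is the same computation specialized, and is purely formal.

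The delicate case, which I expect to be the main obstacle, is $w=v$. The $A_v$-term leaves the characteristic vector unchanged, so on both sides the relevant term occurs at the \emph{same} index $m$, and the required identity $a_v^G[(K,p),E]+\frac{m(m-1)}{2}=s_m+a_v^{+1}[(K,p+2m-1),E]$ follows at once from the agreement of $g_{G_{+1}(v)}$ and $g_G$ off $v$. The $B_v$-term is the crux: because its $v$-shift is $2m_v$ in $G$ and $2(m_v+1)$ in $G_{+1}(v)$, matching a fixed target $v$-evaluation forces the summation index on the $\partial_{G_{+1}(v)}\Psi_v$ side to be exactly one less than on the $\Psi_v\partial_G$ side. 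I would then show that this unit index shift is precisely absorbed by the quadratic term $\frac{m(m-1)}{2}$ in $s_m$: writing $N$ for the index on the $\Psi_v\partial_G$ side, substituting the formulas for $b_v^G$, $b_v^{+1}$ and $s_{N-1}$, and cancelling the matching copies of $g_G$, $g_{G_{+1}(v)}$ together with the common minimal weight $g_G[(K^+,\,\cdot\,),E-v]$ over subsets avoiding $v$ (here $K^+=K+2\sum_u (v\cdot u)\,u^*$ on $G-v$), the claim reduces to the numerical identity
\[
\frac{(N-1)(N-2)}{2}-\frac{N(N-1)}{2}+(N-1)=0 .
\]
Finally I note that the $A_v$- and $B_v$-families target generators whose $G-v$ parts are $K$ and $K^+$, so when $v$ has neighbours they never interfere; when $v$ is isolated they may share a target, but since the matching is $A$-to-$A$ and $B$-to-$B$ the conclusion is unaffected. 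Combining the three cases yields $\Psi_v\partial_G=\partial_{G_{+1}(v)}\Psi_v$, proving that $\Psi_v$ is a chain map.
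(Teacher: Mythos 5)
Your proposal is correct, but it takes a genuinely different route from the paper. You verify $\Psi_v\circ\partial_G=\partial_{G_{+1}(v)}\circ\Psi_v$ head-on, comparing the coefficient of each output generator; the crux, as you identify, is the $B_v$-term, where the self-intersection of $v$ differs by one between $G$ and $G_{+1}(v)$, forcing the summation index on the $\partial_{G_{+1}(v)}\Psi_v$ side to lag by one, and this shift is exactly absorbed by the quadratic part of $s_m$ via the identity $\tfrac{(N-1)(N-2)}{2}-\tfrac{N(N-1)}{2}+(N-1)=0$. I checked this identity and the surrounding telescoping of the $g$-values (using that $g_{G_{+1}(v)}$ and $g_G$ agree on $E-v$ and are insensitive to the $v$-coordinate there); the computation closes, and your parenthetical worry about the $A_v$- and $B_v$-families sharing targets is harmless since each family matches its counterpart as a formal sum. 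The paper instead factors $\Psi_v=P\circ\Phi_e$, where $G_e$ is $G$ with a new $(-1)$-framed vertex $e$ attached to $v$, $\Phi_e$ is the extension map (a chain map by Lemma~\ref{lemma:PhiIsChain}), and $P\colon\CFmComb(G_e)\to\CFmComb(G_{+1}(v))$ is a blow-down-type map shown to be a chain map in Lemma~\ref{lemma:PisChain}. In that factorization the infinite sum over $m$ defining $\Psi_v$ is unfolded into the evaluation of the characteristic vector on the new vertex $e$, so the index shift you wrestle with never appears explicitly; the price is a separate check that the two boundary terms associated to $e$ cancel when $e\in E$. Your argument is more self-contained for this single lemma; the paper's buys economy by reusing Lemma~\ref{lemma:PhiIsChain} and localizing all the arithmetic in the auxiliary map $P$.
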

Before starting the proof of this lemma, we need to define one further map.
Suppose that the graph $G_e$ is constructed from $G$ by adding a new vertex
$e$ with framing $(-1)$ and an edge connecting $e$ and $v$. Consider the map
\[
P \colon \CFmComb(G_e)\longrightarrow
		\CFmComb(G_{+1}(v))
\]
given by the formula: 
\[
P[(K,p,2m-1),E]= \left\{\begin{array}{ll}
U^{s}\otimes [(K,p+2m-1),E] & {\text{if $e\not\in E$}} \\
0	& {\text{if $e\in E$,}} 
\end{array}\right.
\]
where
$s_m=g_{G_{+1}(v)}[(K,p+2m-1),E]-g_{G_e}[(K,p,2m-1),E]+\frac{m(m-1)}{2}$.
(Once again, $(K,p, 2m-1)$ denotes the cohomology class on $G_e$ which
is $K$ on $G-v$, takes the value $p$ on $v$ and the value $2m-1$ on
$e$.)  As above, it can be verified that $P$ extends to a well-defined
function on $\CFmComb (G_e)$.
\begin{lem}
\label{lemma:PisChain}
The map $P$ is a  chain map.
\end{lem}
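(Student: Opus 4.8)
The plan is to verify the chain-map identity $P\circ\partial_{G_e}=\partial_{G_{+1}(v)}\circ P$ by evaluating both sides on a generator $[(K,p,q),E]$ with $q=2m-1$ and checking agreement term by term. The whole computation is local around the two vertices $e$ and $v$: the only data of $G_e$ and $G_{+1}(v)$ that enters is the edge joining $e$ and $v$, the intersection numbers $E_e^2=-1$ and $E_e\cdot E_v=1$, and the fact that the self-intersection of $v$ in $G_{+1}(v)$ is one larger than in $G_e$. Accordingly I would split into the cases $e\notin E$ and $e\in E$, the first being the substantive one.

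\emph{Case $e\notin E$.} Here $P[(K,p,q),E]=U^{s_m}[(K,p+2m-1),E]$ is nonzero, and I would match the boundary contributions coming from each $w\in E$ one at a time. First I would identify the targets. Adding $2w^*$ to $(K,p,q)$ changes the $e$-coordinate by $2(E_w\cdot E_e)$, which is nonzero only when $w=v$; thus for the $A$-component and for the $B$-component with $w\neq v$ the map $P$ is again taken at index $m$, while for the $B$-component along $w=v$ the $e$-value is raised from $2m-1$ to $2m+1$, so $P$ is taken at index $m+1$. A short computation with the two intersection forms then shows that in each case the generator of $\CFmComb(G_{+1}(v))$ produced by $P\circ\partial_{G_e}$ coincides with the one produced by $\partial_{G_{+1}(v)}\circ P$. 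Once the targets are matched, equality of the $U$-exponents follows formally from the definition of $s_m$ together with the formula $B_w=\tfrac{K(w)+w\cdot w}{2}+g[K+2w^*,E-w]$: the $g$-function contributions cancel in pairs, and the only genuine arithmetic is that the discrepancy $\tfrac{m(m-1)}{2}-\tfrac{(m+1)m}{2}=-m$ introduced by the index shift in the $w=v$ term is exactly cancelled by $\tfrac{K(v)+v\cdot v}{2}$ being computed one larger in $G_{+1}(v)$ than in $G_e$.

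\emph{Case $e\in E$.} Now $P[(K,p,q),E]=0$, so I must show $P(\partial_{G_e}[(K,p,q),E])=0$. Since $P$ annihilates every generator that still contains $e$, the only surviving contributions come from deleting $e$: the $A$-term $U^{a_e}[(K,p,q),E-e]$ and the $B$-term $U^{b_e}[(K,p,q)+2e^*,E-e]$. The latter lowers the $e$-index to $m-1$ and raises the $v$-value by $2$, so after applying $P$ both terms land in the single generator $[(K,p+2m-1),E-e]$ of $\CFmComb(G_{+1}(v))$. Over $\Field\cong\Zmod{2}$ they therefore cancel, and I would check that their exponents agree, again a one-line computation in which $\tfrac{m(m-1)}{2}-\tfrac{(m-1)(m-2)}{2}=m-1$ matches the quantity $\tfrac{q+E_e^2}{2}=m-1$ appearing in $B_e$.

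Once these two cases are in place the lemma follows. The calculation is almost entirely formal: I expect the exponent identities to reduce, for each boundary component, to a cancellation among $g$-functions that holds identically, so that no explicit evaluation of the minimal weights is needed. The one place demanding care — and the main obstacle — is the bookkeeping of the targets, in particular correctly tracking the shift of the $e$-index under the $B$-differential along $v$ (and under the $B$-differential along $e$ in the second case), and confirming that the characteristic vectors on $G_e$ and on $G_{+1}(v)$ related by this blow-down genuinely agree after $P$ is applied.
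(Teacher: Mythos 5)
Your proposal follows essentially the same route as the paper's proof: the same case split on $e\in E$ versus $e\notin E$, the same term-by-term matching of targets (with the index shift $m\mapsto m+1$ for the $B$-component along $v$ when $e\notin E$, and $m\mapsto m-1$ for the $B$-component along $e$ when $e\in E$), and the same cancellation of $g$-function contributions in the exponent identities; your arithmetic checks (the $-m$ versus $+m$ cancellation and the $m-1$ computation) are correct. No gaps.
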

\begin{proof}
We wish to prove 
that $\partial\circ P[(K,p,2m-1),E] = P\circ \partial [(K,p,2m-1),E].$
First, we consider the case where $e\in E$.  In this case the
left hand side is zero. Moreover,
\begin{eqnarray*}
P\circ \partial [(K,p,2m-1),E] &=&
P(U^{a_e[(K,p,2m-1),E]}\otimes [(K,p,2m-1),E-e]) \\
&& +
P(U^{b_e[(K,p,2m-1),E]}\otimes [(K,p+2,2m-3),E-e]) \\
&=& U^{d_1}\otimes [(K,p+2m-1),E-e] + U^{d_2}  \otimes [(K,p+2m-1),E-e],
\end{eqnarray*}
where $$d_1=
a_e[(K,p,2m-1),E]+g[(K,p+2m-1),E-e]-g[(K,p,2m-1),E-e]+\frac{m(m-1)}{2}$$
and $$d_2=
b_e[(K,p,2m-1),E]+g[(K,p+2m-1),E-e]-g[(K,p+2,2m-3),E-e]+
2m^2-6m+4.$$
In fact, it is easy to see that 
$$d_1=g[(K,p+2m-1),E-e]-g[(K,p,2m-1),E]+\frac{m(m-1)}{2}=d_2,$$
so the two terms cancel.

Next, suppose that $e\not\in E$. 
Observe that
\begin{eqnarray*}
\lefteqn{P\circ \partial [(K,p,2m-1),E]=} \\
&& 
\sum_{w\in E} U^{c_1(w)}\otimes [(K,p+2m-1),E-w] 
+ U^{d_1(w)} \otimes[(K,p+2m-1)+2w^*,E-w],
\end{eqnarray*}
and 
\begin{eqnarray*}
\lefteqn{\partial \circ P[(K,p,2m-1),E]=} \\
&& \sum_{w\in E} U^{c_2(w)}\otimes [(K,p+2m-1),E-w] 
 + U^{d_2(w)}\otimes [(K,p+2m-1)+2w^*,E-w],
\end{eqnarray*}
In fact, it is easy to see that
$$c_1(w)=
g[(K,p+2m-1),E-w]-g[(K,p,2m-1),E]+\frac{m(m-1)}{2}=
 c_2(w)$$
and
$$d_1(w)=
g[(K,p+2m-1),E-w]-g[(K,p,2m-1),E]+\frac{m(m-1)}{2} + \frac{L(w)+w\cdot
  w}{2} =d_2(w),$$ where $L=(K,p+2m-1)$ and $w\cdot w$ is taken in
$G_{+1}(v)$.  This completes the verification of the statement of the
lemma.
\end{proof}
\begin{proof}[Proof of Lemma~\ref{lem:psichainmap}]
Consider now the map $\Phi _e\colon \CFmComb (G)\to \CFmComb (G_e)$.
The map $\Psi _v$ is simply the composition $P\circ \Phi _e$, and since
both maps are chain maps, so is $\Psi _v$, concluding the proof of the
lemma.
\end{proof}

\begin{thm}
\label{thm:ShortExact}
For any $v\in G$, the $U$-equivariant maps $\Psi_v$ and $\Phi_v$ fit
into a short exact sequence of chain complexes:
\begin{equation}\label{eq:minusses}
0\longrightarrow \CFmComb(G-v) \stackrel{\Phi_v}{\longrightarrow}
\CFmComb(G) \stackrel{\Psi_v}{\longrightarrow}
\CFmComb(G_{+1}(v)){\longrightarrow} 0.
\end{equation}
\end{thm}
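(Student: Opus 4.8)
The plan is to verify the three conditions that make \eqref{eq:minusses} a short exact sequence of $\Field[[U]]$-modules: injectivity of $\Phi_v$ (established in the text preceding Lemma~\ref{lemma:PhiIsChain}), surjectivity of $\Psi_v$, and exactness in the middle, $\ker\Psi_v=\operatorname{im}\Phi_v$. Since $\Phi_v$ and $\Psi_v$ are chain maps (Lemmas~\ref{lemma:PhiIsChain} and~\ref{lem:psichainmap}), once the underlying module sequence is exact we are done. The first observation is that $\Phi_v$ only introduces the value on $v$ while $\Psi_v$ only alters it, so both maps preserve the restriction $K|_{\Vertices(G)-v}$ and the subset $E$. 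Hence the whole sequence splits as a direct product over pairs $(K|_{\Vertices(G)-v},E)$, and in each summand it becomes a map between products of copies of $\Field[[U]]$ indexed by the value $p$ of the characteristic vector on $v$. I would analyze these ``columns'' one at a time, distinguishing $v\notin E$ from $v\in E$.

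The key computation is the matrix of $\Psi_v$ in such a column. Using \eqref{eq:gweight}, for $I\subset E$ the weight $f_{G_{+1}(v)}[(K,p+2m-1),I]$ equals $f_G[(K,p),I]$ when $v\notin I$, and equals $f_G[(K,p),I]+m$ when $v\in I$ (the extra $+1$ in $v\cdot v$ together with the shift $p\mapsto p+2m-1$ contribute exactly $\tfrac{1}{2}((2m-1)+1)=m$). Taking minima over $I$ yields, with $A=A_v[(K,p),E]$, $B=B_v[(K,p),E]$ and $\delta=B-A$, the exponent $s_m=\min(0,\delta+m)-\min(0,\delta)+\tfrac{m(m-1)}{2}$; for $v\notin E$ there are no subsets containing $v$, so $\delta=+\infty$ and $s_m=\tfrac{m(m-1)}{2}$. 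Moreover $B_v=\tfrac{p+m_v}{2}+A_v$, because $g[(K,p)+2v^*,E-v]=A_v$ is independent of $p$, so $\delta(p)=\tfrac{p+m_v}{2}$ is a strictly increasing linear function of $p$ crossing $0$ exactly at $p=-m_v$. The terms of $\Psi_v[(K,p),E]$ of lowest $U$-order (those with $s_m=0$) are then $m\in\{0,1\}$ when $\delta>0$, $m\in\{0,-1\}$ when $\delta<0$, and $m\in\{-1,0,1\}$ at the single value $\delta=0$.

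Reducing mod $U$ turns $\Phi_v$ into the diagonal inclusion $w\mapsto(w,w,\dots)$ and $\Psi_v$ into the finite-difference operator recording only those lowest-order terms. When $v\notin E$ this is the translation-invariant operator $\bar x\mapsto(\bar x_{q-1}+\bar x_{q+1})_q$, which is surjective with kernel the constant sequences; since the image of $\bar\Phi_v$ is exactly the constants, the mod-$U$ sequence is exact. When $v\in E$ it is the analogous three-term recurrence but with a single transition at $p=-m_v$; solving by propagating away from the transition point in both directions shows $\bar\Psi_v$ is an \emph{isomorphism}, matching the fact that $\bar\Phi_v$ contributes nothing in this case. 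Thus the sequence is exact mod $U$ in every column.

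Finally I would lift from mod $U$ to $\Field[[U]]$. First, $\Psi_v\circ\Phi_v=0$ already over $\Field[[U]]$: the coefficient of each target generator is $\sum_{m\in\Z}U^{m(m-1)/2}$, which vanishes over $\Field\cong\Zmod2$ since the involution $m\leftrightarrow 1-m$ preserves the exponent. Given mod-$U$ surjectivity, a successive-approximation argument produces $U$-adic preimages, correcting an approximate solution order by order in $U$ (in characteristic $2$ the error at each stage gains one more power of $U$); the resulting series converge coordinatewise because the modules are products of the complete ring $\Field[[U]]$. For the kernel, if $\Psi_v(x)=0$ then $\bar x\in\ker\bar\Psi_v=\operatorname{im}\bar\Phi_v$, and subtracting a lift and dividing by $U$ (legitimate since the target is $U$-torsion-free) lets one iterate, exhibiting $x$ as $\Phi_v$ of a convergent series. \textbf{The main obstacle} is the case $v\in E$: one must show that the bi-infinite, non-translation-invariant leading operator is genuinely bijective rather than merely formally invertible, which is exactly where the explicit linear dependence $\delta(p)=\tfrac{p+m_v}{2}$ is needed, and one must check with care that all infinite sums occurring in the completed (direct-product) complexes converge and that $\Psi_v$ commutes with the relevant $U$-adic limits.
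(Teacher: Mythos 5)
Your proposal is correct, and its core --- the splitting into columns indexed by $(K|_{\Vertices(G)-v},E)$, the computation $s_m=\min(0,\delta+m)-\min(0,\delta)+\tfrac{m(m-1)}{2}$, the identification of the mod-$U$ maps as a diagonal inclusion followed by a two- or three-term difference operator, and the case analysis $v\notin E$ versus $v\in E$ --- is exactly the content of the paper's Proposition~\ref{prop:ShortExacta} and the lemma preceding it (the paper exhibits your ``propagation away from the transition point'' as an explicit one-sided or two-sided geometric-series inverse $q$). You also prove $\Psi_v\circ\Phi_v=0$ over $\Field[[U]]$ by the same involution $m\leftrightarrow 1-m$ that the paper uses. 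Where you genuinely diverge is the passage from the $U=0$ statement to the $\Field[[U]]$ statement: the paper regards each three-term sequence as a chain complex, observes that its homology vanishes after $\otimes_{\Field[[U]]}\Field$ by Proposition~\ref{prop:ShortExacta}, and invokes the Universal Coefficient Theorem together with a PID argument to conclude the homology over $\Field[[U]]$ vanishes; you instead run a direct $U$-adic successive-approximation (Nakayama-type) lift, building preimages and factorizations order by order in $U$ and using $U$-torsion-freeness of the targets. Both are sound; your route is more elementary and makes the needed hypotheses explicit (completeness of the direct products, $U$-equivariance and $U$-adic continuity of $\Psi_v$, torsion-freeness), whereas the paper's route hides these in the UCT but avoids any convergence bookkeeping. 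One small correction: $B_v-A_v=\tfrac{p+m_v}{2}+g[(K+2v^*),E-v]-g[K,E-v]$, and the two $g$-terms do \emph{not} cancel in general (the class $K+2v^*$ shifts the values on neighbours of $v$), so the crossing $\delta=0$ occurs at $p=-m_v-2c$ for a $p$-independent constant $c$ rather than at $p=-m_v$; this does not affect your argument, since all you use is that $\delta$ is a strictly increasing integer-valued function of $p$ hitting $0$ exactly once.
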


The theorem could be proved by a direct check of exactness at each
term --- we rather choose an alternative way of first dealing with the
$U=0$ theory (and the corresponding result there) and then apply
abstract reasoning to verify the theorem.  Define the map $\Phia
_v\colon \CFaComb (G-v)\to \CFaComb (G)$ corresponding to $\Phi _v$ 
by the same formula as given by \eqref{eq:phiformula}. Next define
$\Psia _v \colon \CFaComb (G) \to \CFaComb (G_{+1}(v))$ (corresponding to the map
$\Psi _v$) by the same formula as given in Equation~\eqref{eq:psivdef}, after setting 
$U=0$.

\begin{lem}
  The map $\Psia _v\colon \CFaComb (G)\to \CFaComb (G_{+1}(v))$
  corresponding to $\Psi _v$ in the $U=0$ theory is given by the
  formula
\[
\Psi_v([(K,p),E])=[(K,p+1),E]+[(K,p-1),E]
\]
if $v\not \in E$ and by 
\begin{eqnarray*}
\lefteqn{\Psia_v([(K,p),E])} \\
&=&
\left\{\begin{array}{cl}
{[(K,p+1),E]}+{[(K,p-1),E]} 
& {\text{if $A_v([(K,p),E])< B_v([K,p],E)$,}} \\
{[(K,p+1),E]}+{[(K,p-1),E]} +{[(K,p-3),E]} 
& {\text{if $A_v([(K,p),E])= B_v([K,p],E)$,}} \\
{[(K,p-1),E]}+{[(K,p-3),E]} 
& {\text{if $A_v([(K,p),E])> B_v([K,p],E)$.}} 
\end{array}\right.
\end{eqnarray*}
for $v\in E$.
\end{lem}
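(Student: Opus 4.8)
The plan is to read the $U=0$ reduction of $\Psi_v$ straight off its definition in \eqref{eq:psivdef}. Since $\Psi_v([(K,p),E])=\sum_m U^{s_m}\otimes[(K,p+2m-1),E]$ with every $s_m\geq 0$, setting $U=0$ retains precisely the summands with $s_m=0$. Thus the whole content of the lemma is the determination of the set $\{m : s_m=0\}$, and this reduces to understanding how the minimal-weight function $g$ reacts to the two simultaneous modifications built into $\Psi_v$: replacing the framing $m_v$ by $m_v+1$ and replacing the value $p$ on the vertex $v$ by $p+2m-1$.

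First I would establish the governing weight identity by substituting into \eqref{eq:gweight}. For a subset $I\subset E$ with $v\notin I$, neither the value on $v$ nor the self-intersection $v\cdot v$ enters the formula, so $f_{G_{+1}(v)}[(K,p+2m-1),I]=f_G[(K,p),I]$; hence the ``$A_v$-part'' of $g$ is insensitive to $m$. For $I$ with $v\in I$, the change $p\mapsto p+2m-1$ contributes $2m-1$ to $2f$ while the framing change $v\cdot v\mapsto v\cdot v+1$ contributes $+1$, so $2f_{G_{+1}(v)}[(K,p+2m-1),I]-2f_G[(K,p),I]=2m$; that is, the ``$B_v$-part'' shifts by exactly $m$. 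Taking minima over subsets I would conclude, writing $A_v,B_v$ for the two quantities at $[(K,p),E]$,
\[
g_{G_{+1}(v)}[(K,p+2m-1),E]=\min\{A_v,\,B_v+m\},
\]
and therefore
\[
s_m=\min\{A_v,\,B_v+m\}-\min\{A_v,\,B_v\}+\tfrac{m(m-1)}{2}.
\]

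Then I would run the elementary case analysis for when $s_m=0$, using that $A_v$ and $B_v$ are integers (which follows from the characteristic condition on $K$, which forces $2f$ to be even). When $v\notin E$ the $B_v$ term is absent, $s_m=\tfrac{m(m-1)}{2}$, and only $m\in\{0,1\}$ survive, giving $[(K,p+1),E]+[(K,p-1),E]$. For $v\in E$: if $A_v<B_v$ one gets $s_m=\tfrac{m(m-1)}{2}$ for $m\geq 0$ and $s_m=(B_v-A_v)+\tfrac{m(m+1)}{2}>0$ for $m<0$, so again only $m\in\{0,1\}$ contribute; if $A_v>B_v$ one finds $s_1=1$ but $s_0=s_{-1}=0$, yielding $[(K,p-1),E]+[(K,p-3),E]$; and the boundary case $A_v=B_v$ additionally lets $m=-1$ survive, producing the three-term middle formula. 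Matching each surviving $m$ with the shift $p\mapsto p+2m-1$ gives the stated expressions.

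The step I expect to be the main (though still modest) obstacle is the bookkeeping in the weight identity: one must check carefully that the two separate effects --- the change of the value on $v$ and the change of its self-intersection --- combine to give exactly the shift $+m$ in the $B_v$-part, and one must invoke integrality of $f$ so that the strict inequalities $A_v\neq B_v$ produce gaps of size at least $1$, which is what kills the would-be boundary terms and collapses the generic cases to two summands. Once the displayed formula for $s_m$ is in place, the remaining case analysis is purely arithmetic.
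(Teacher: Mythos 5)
Your proposal is correct and follows essentially the same route as the paper: one computes how the weight $f$ of a subset $I$ changes under the simultaneous replacement $p\mapsto p+2m-1$ and $m_v\mapsto m_v+1$ (no change if $v\notin I$, a shift by $m$ if $v\in I$), deduces $s_m=\min\{A_v,B_v+m\}-\min\{A_v,B_v\}+\tfrac{m(m-1)}{2}$, and then runs the case analysis on the sign of $A_v-B_v$ to find the $m$ with $s_m=0$. Your packaging of the $g$-difference into the single closed formula $\min\{A_v,B_v+m\}-\min\{A_v,B_v\}$ is a slightly cleaner organization of the same computation the paper carries out case by case, and your explicit appeal to the integrality of $f$ (so that $A_v\neq B_v$ forces a gap of at least $1$) makes precise a point the paper uses implicitly.
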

\begin{proof}
Indeed, if $v\not \in E$ we have
$g_{G_1}[(K,p+2m-1),E]-g_G([(K,p),E]=0$, hence $s_m
=\frac{m(m-1)}{2}$, which is positive unless $m=0,1$, hence provides
only the two corresponding terms in the $U=0$ theory.

The case of $v\in E$ requires
a little more care. Suppose first that $A_v([(K,p),E])< B_v([K,p],E)$,
meaning that the value $g[(K,p),E]$ is taken on a subset  $I\subset E$
which does not contain $v$. Therefore for nonnegative $m$ the difference
of the $g$-functions is zero, hence $s_m=0$ implies $\frac{m(m-1)}{2}=0$,
which holds exactly when $m=0,1$, providing the two terms in the expression.
For $m<0$ and $B_v([K,p],E)-A_v([(K,p),E])=k>0$ the value of $s_m$ is 
$m+k+\frac{m(m-1)}{2}$, which is strictly positive for any $m<0$ (since
$k\geq 1$).

Suppose now that  $A_v([(K,p),E])= B_v([K,p],E)$. In this case for
$m\geq 0$ the difference of the $g$-functions is zero, hence
$s_m=0$ is equivalent with $\frac{m(m-1)}{2}=0$, providing the two terms
corresponding to $m=0,1$. For negative $m$ the term $s_m$ is equal to 
$m+\frac{m(m-1)}{2}$, and this is zero exactly when $m=-1$, giving the third
term in the expression.

Finally if $A_v([(K,p),E])> B_v([K,p],E)$ then for $m>0$ the difference
of the $g$-functions is positive (and $\frac{m(m-1)}{2}$ is nonnegative),
while for $m\leq 0$ the value of $s_m$ is equal to $m+\frac{m(m-1)}{2}$,
which is zero exactly when $m=0,-1$, giving the claimed two terms in this case.
\end{proof}

Having these formulae, now it is easy to see that the short sequence
of~\eqref{thm:ShortExact} given by the maps on the $U=0$ theory
is exact, providing the long exact sequence on homologies:

\begin{prop}
\label{prop:ShortExacta}
For any $v\in G$, the maps $\Phia _v$ and $\Psia _v$ fit into the  short exact
sequence 
\begin{equation}\label{eq:hatses}
0\longrightarrow \CFaComb(G-v)
\stackrel{\Phia _v}{\longrightarrow}
\CFaComb(G) \stackrel{\Psia _v}{\longrightarrow}
\CFaComb(G_{+1}(v))\longrightarrow 0
\end{equation}
of chain complexes.
\end{prop}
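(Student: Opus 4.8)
I would prove Proposition~\ref{prop:ShortExacta} by reducing exactness to a collection of elementary one-variable verifications, organized by a splitting that both maps respect. First, note that $\Phia_v$ and $\Psia_v$ are chain maps: they are the reductions modulo $U$ of $\Phi_v$ and $\Psi_v$, which are chain maps by Lemmas~\ref{lemma:PhiIsChain} and~\ref{lem:psichainmap}, and setting $U=0$ is a ring homomorphism $\Field[[U]]\to\Field$. For checking exactness as a sequence of modules the differentials play no role; what matters is the module structure. The key structural observation is that a characteristic vector on $G$ (and likewise on $G_{+1}(v)$) is a pair $(K,p)$, where $K\in\Char(G-v)$ is the restriction to the common subgraph $G-v$ and $p$ is the value on $v$ (with $p\equiv m_v$, resp.\ $p\equiv m_v+1\pmod 2$). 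Since $\Phia_v$ and $\Psia_v$ alter only the value on $v$, the entire sequence \eqref{eq:hatses} splits as a direct product, over all pairs $(K,E)$ with $K\in\Char(G-v)$ and $E\subseteq\Vertices(G)$, of \emph{column sequences}; and a direct product of sequences of $\Field$-vector spaces is exact precisely when each factor is. Thus it suffices to check exactness one column at a time, noting that the left-hand term $\CFaComb(G-v)$ contributes only to those columns with $v\notin E$.

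For a column with $v\notin E$, the map $\Psia_v$ is the two-term expression $[(K,p),E]\mapsto[(K,p+1),E]+[(K,p-1),E]$, while $\Phia_v$ sends the single generator $[K,E]$ to the ``all-ones'' element $\sum_p[(K,p),E]$. Injectivity of $\Phia_v$ is immediate. Computing over $\Field\cong\Zmod{2}$, the coefficient of $[(K,q),E]$ in $\Psia_v(\sum_p c_p[(K,p),E])$ is $c_{q-1}+c_{q+1}$, so the kernel of $\Psia_v$ is exactly the set of constant (all-ones) vectors, which is the image of $\Phia_v$; and since the same recurrence $c_{q-1}+c_{q+1}=d_q$ can be solved for any prescribed target coefficients $d_q$ (we are free to use infinite support, as we work with direct products), $\Psia_v$ is onto. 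This settles exactness for these columns.

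The columns with $v\in E$ are the crux, and here I must show that $\Psia_v$ restricts to an \emph{isomorphism} onto the corresponding column for $G_{+1}(v)$ (the left-hand term being zero). The three-case formula for $\Psia_v$ depends on the sign of $B_v([(K,p),E])-A_v([(K,p),E])$. The essential point is to control this difference as $p$ varies: because the subsets $I\subseteq E-v$ entering $A_v=g([(K,p),E-v])$ and entering the $g$-term of $B_v$ never contain $v$, neither $g$-value depends on $p$, so $B_v-A_v=\tfrac{p+m_v}{2}+c$ for a $p$-independent integer $c$. This is strictly increasing and integer-valued on the parity class $p\equiv m_v\pmod 2$, so it vanishes at a unique threshold $p_0$: the upper two-term formula applies for $p>p_0$, the three-term formula exactly at $p_0$, and the lower two-term formula for $p<p_0$. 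After relabeling the source and target bases by an integer index, $\Psia_v$ becomes bidiagonal, with its ``center of mass'' dropping by two across the threshold; this triangular shape lets me solve uniquely for the preimage of any target element by propagating away from $p_0$ in both directions, and the extra term contributed at $p_0$ is precisely what eliminates the otherwise-present constant element from the kernel. Hence $\Psia_v$ is bijective on each such column.

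The main obstacle is this last step. The reductions and the $v\notin E$ case are routine, but for $v\in E$ one must both establish the monotonicity that pins down the single threshold $p_0$ and then verify invertibility in the completed (infinite direct-product) setting, where one has to check that the two-sided triangular solve is well defined and that the threshold term genuinely breaks the degeneracy. Once all columns are handled, reassembling the direct product yields the short exact sequence \eqref{eq:hatses}.
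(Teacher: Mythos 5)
Your proposal is correct and follows essentially the same route as the paper's proof: both split the sequence into columns indexed by pairs $(K,E)$ with $K\in\Char(G-v)$, handle the $v\notin E$ columns via the two-term recurrence (kernel $=$ constant vector $=$ image of $\Phia_v$, surjectivity by an infinite-support solve, which is the paper's explicit right inverse $r$), and show $\Psia_v$ is an isomorphism on the $v\in E$ columns using the unique value $p_0$ at which $A_v=B_v$ (the paper writes down the explicit inverse $q$; your two-sided triangular solve produces the same map). Your monotonicity computation showing $B_v-A_v=\tfrac{p+m_v}{2}+c$ is a worthwhile justification of the ``exactly one $p$ with $A_v=B_v$'' principle that the paper asserts without detail.
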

\begin{proof}
  Each group $\CFaComb(G-v)$, $\CFaComb(G)$, and $\CFaComb(G_{+1})$
  splits into a direct product indexed by pairs $K\in\Char(G-v)$,
  $E\subset\Vertices(G)$. The maps $\Phia _v$ and $\Psia _v$ obviously
  respect this splitting. We claim that these maps fit into short
  exact sequences for each summand.

  More precisely, in the case where $v\not\in E$, the corresponding
  summand of $\CFaComb(G-v)$ is one-dimensional, generated by the
  element $[K,E]$, and the desired short exact sequence is:
\[
0 \longrightarrow \Field [K,E]\stackrel{\phi _v}{\longrightarrow}
\prod_{i\in\Z}\Field[(K,k+2i),E] \stackrel{\psi _v}{\longrightarrow}
\prod_{i\in\Z}\Field[(K,k+2i+1),E] \longrightarrow 0,
\]
where 
\[
\phi _v ([K,E])=\sum_{i\in\Z}[(K,k+2i),E]
\]
and 
\[
\psi _v([(K,p),E])=[(K,p+1),E]+[(K,p-1),E].
\]
A right inverse for $\psi _v $ is determined by
$$r[(K,p-1),E]=\sum_{i=0}^{\infty}[(K,p+2i),E],$$
and it is easy to see that $\ker \psi _v=$Im$\phi _v$.

In the case where $v\in E$, we declare the corresponding summand of
$\CFaComb(G-v)$ to be trivial, so we claim that the corresponding sequence
\[
0 \longrightarrow 0 \stackrel{\phi _v}{\longrightarrow}
\prod_{i\in\Z}\Field[(K,k+2i),E] \stackrel{\psi _v}{\longrightarrow}
\prod_{i\in\Z}\Field[(K,k+2i+1),E] \longrightarrow 0
\]
is short exact, i.e. $\psi _v$ is an isomorphism. Indeed,
the map
\[
q\colon\prod_{i\in\Z}\Field[(K,k+2i+1),E] \longrightarrow 
\prod_{i\in\Z}\Field[(K,k+2i),E],
\]
which is uniquely determined by:
\[
q([(K,p-1),E])=
\left\{\begin{array}{ll}
\sum_{i=0}^{\infty}[(K,p+2i),E] &{\text{if 
$A_v([(K,p),E])<B_v([(K,p),E])$}} \\ & \\
\sum_{i=-\infty}^{0}[(K,p+2i),E] &{\text{if 
$A_v([(K,p),E])>B_v([(K,p),E])$}} \\ & \\
\sum_{i=-\infty}^{\infty}[(K,p+2i),E] &{\text{if 
$A_v([(K,p),E])=B_v([(K,p),E])$.}} 
\end{array}
\right.
\]
provides an inverse for $\psi _v$. Indeed, the fact that $\psi _v\circ
q$ and $q\circ \psi _v$ are both equal to the (respective) identities
follows from the principle, that for a given $[K,E]$ there is exactly
one value of $p$ for which $A_v[(K,p),E]=B_v[(K,p),E]$.
 \end{proof}
The short exact sequences then induce a long exact sequence on homologies,
and since both $\Phia _v$ and $\Psia _v$ respect the grading of $[K,E]$
induced by $\vert E\vert$, we get the following
\begin{cor}
  The short exact sequence of Proposition~\ref{prop:ShortExacta} induces a
  long exact sequence
\[
\ldots \to \HFaComb _{i+1} (G_{+1}(v)) \to \HFaComb _i (G-v)\to
\HFaComb _i(G) \to \HFaComb _i (G_{+1}(v))\to \
\HFaComb _{i-1}(G-v) \to \ldots
\]
on $\delta$-graded lattice homology. \qed
\end{cor}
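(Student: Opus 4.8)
The plan is to reduce the statement to the standard zig-zag (snake) lemma applied to the short exact sequence of chain complexes furnished by Proposition~\ref{prop:ShortExacta}, and then to keep track of the effect of each map on the $\delta$-grading.

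First I would invoke the general fact that any short exact sequence of chain complexes induces a long exact sequence on homology. For the sequence
\[
0\longrightarrow \CFaComb(G-v)
\stackrel{\Phia _v}{\longrightarrow}
\CFaComb(G) \stackrel{\Psia _v}{\longrightarrow}
\CFaComb(G_{+1}(v))\longrightarrow 0,
\]
the induced maps $(\Phia_v)_*$ and $(\Psia_v)_*$, together with the connecting homomorphism obtained by the usual diagram chase (lift a cycle along $\Psia_v$, apply the differential $\partial$, and recognize the result as $\Phia_v$ of a cycle), assemble into an exact sequence. Since all three complexes are (possibly infinite) products of $\Field$-vector spaces, no flatness or finiteness hypotheses are needed, and this step is purely formal.

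The substantive point is the grading. I would observe from the explicit formulas that both $\Phia_v$ and $\Psia_v$ preserve the $\delta$-grading $\vert E\vert$: the map $\Phia_v$ leaves the subset $E$ untouched, while $\Psia_v$ alters only the evaluation of the characteristic vector on $v$, so in each case $\vert E\vert$ is unchanged. Hence the displayed sequence is one of $\delta$-graded complexes in which both maps have degree zero, so that $(\Phia_v)_*$ and $(\Psia_v)_*$ preserve the index $i=\vert E\vert$. On the other hand, since the differential $\partial$ lowers $\vert E\vert$ by exactly one, the connecting homomorphism, which is built out of $\partial$, lowers $i$ by one. Inserting these degree shifts into the abstract long exact sequence reproduces precisely
\[
\ldots \to \HFaComb _{i+1} (G_{+1}(v)) \to \HFaComb _i (G-v)\to
\HFaComb _i(G) \to \HFaComb _i (G_{+1}(v))\to
\HFaComb _{i-1}(G-v) \to \ldots.
\]

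I do not expect any genuine obstacle: once the short exact sequence of Proposition~\ref{prop:ShortExacta} is established, the conclusion is standard homological algebra. The only care required is the (immediate) verification that $\Phia_v$ and $\Psia_v$ are maps of $\delta$-graded complexes and that the connecting homomorphism drops the grading by one, both of which follow from the grading behavior of $\partial$ recorded in Section~\ref{sec:review}.
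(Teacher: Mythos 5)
Your proposal is correct and is essentially identical to the paper's argument: the paper also simply invokes the long exact sequence associated to the short exact sequence of Proposition~\ref{prop:ShortExacta} and notes that $\Phia_v$ and $\Psia_v$ respect the grading by $\vert E\vert$, so that the connecting homomorphism (built from the differential, which drops $\vert E\vert$ by one) shifts the index down by one. Nothing further is needed.
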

With the above result at hand we return to the theory over $\Field [[U]]$.
\begin{proof}[Proof of Theorem~\ref{thm:ShortExact}]
First we claim that $\Psi_v\circ \Phi_v=0$. This follows from the fact that
\[
(\Psi _v\circ\Phi_v)[K,E]=\sum_p\sum_m U^{\frac{m(m-1)}{2}} \otimes [(K,p+2m-1),E].
\]
(Notice that since $v\not \in E$, we have that
$g_{G_{+1}(v)}[(K,p+2m-1),E]=g_G[(K,p),E]$, hence
$s_m=\frac{m(m-1)}{2}$.)  Observe that each term in the above sum
appears exactly twice: the term corresponding to $(p,m)$ agrees with the term
corresponding to $(p+4m-2,-m+1)$. Indeed, the system 
\[
p+2m-1=p'+2m'-1 \qquad {\mbox {and}} \qquad m(m-1)=m'(m'-1)
\]
has exactly the two solutions for $(p',m')$ given above.  This
cancellation then shows that $(\Psi _v\circ\Phi_v)[K,E]=0$, verifying
the claim.

Now we define two homology theories associated to the pair $(G,v)$:
let ${\widehat {\rm {H}}}_{SES}(G,v)$ denote the homology of the short
exact sequence \eqref{eq:hatses} (viewed it as a chain complex with
underlying group the sum of the terms in the sequence and boundary map
equal to the maps in the sequence). Similarly, ${\rm
  {H}}^-_{SES}(G,v)$ will denote the homology of the sequence
\eqref{eq:minusses}. (Since the compositions of conscutive maps in
these sequences are zero, these homologies are defined.) The content
of Proposition~\ref{prop:ShortExacta} is that ${\widehat {\rm
    {H}}}_{SES}(G,v)=0$, while in Theorem~\ref{thm:ShortExact} we want
to show that ${\rm {H}}^-_{SES}(G,v)=0$. The two homologies are,
however, connected by the Universal Coefficient Theorem.  Indeed,
${\rm {H}}^-_{SES}(G,v)$ is defined over the ring $\Field [[U]]$,
while the chain complex defining ${\widehat {\rm {H}}}_{SES}(G,v)$ can
be given from \eqref{eq:minusses} by considering the tensor product of
the $\CFmComb$-modules with $\Field$ over $\Field [[U]]$, where a
power series in $\Field [[U]]$ acts through its constant term on
$\Field$.  By the Universal Coefficient Theorem \cite{spanier} (and by
the fact that $\Field$ is a field) we get that $ {\rm
  {H}}^-_{SES}(G,v)\otimes _{\Field [[U]]}\Field = {\widehat {\rm
    {H}}}_{SES}(G,v)=0$.  Since $\Field [[U]]$ is a principal ideal
domain, the tensor product of any nontrivial module with $\Field$
(over $\Field [[U]]$) is nontrivial: consider a nontrivial element
$x\in {\rm {H}}^-_{SES}(G,v)$ and observe that the submodule generated
by it is isomorphic to $\Field [[U]]/(f(U))$ with $f(0)=0$ (since
$\Field [[U]]$ is a PID), and $\Field [[U]]/(f(U)) \otimes _{\Field
  [[U]]}\Field =\Field \neq 0$.  Since we showed that ${\widehat {\rm
    {H}}}_{SES}(G,v)=0$, this last observation then implies that ${\rm
  {H}}^-_{SES}(G,v)=0$, concluding the proof of the Theorem.
\end{proof}

\begin{cor}
  \label{cor:LongExactLattice}
  The short exact sequence of Theorem~\ref{thm:ShortExact} induces a
  long exact sequence
\[
\ldots \to \HFmComb _{i+1} (G_{+1}(v)) \to \HFmComb _i (G-v)\to 
\HFmComb _i(G) \to \HFmComb _i (G_{+1}(v))\to \
\HFmComb _{i-1}(G-v) \to \ldots
\]
on $\delta$-graded lattice homology.
\end{cor}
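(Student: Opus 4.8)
The plan is to apply the standard homological-algebra principle that a short exact sequence of chain complexes induces a long exact sequence on homology (the zig-zag lemma). The input is precisely the short exact sequence
\[
0\longrightarrow \CFmComb(G-v) \stackrel{\Phi_v}{\longrightarrow}
\CFmComb(G) \stackrel{\Psi_v}{\longrightarrow}
\CFmComb(G_{+1}(v))\longrightarrow 0
\]
supplied by Theorem~\ref{thm:ShortExact}, which is a sequence of $\Field[[U]]$-modules with $U$-equivariant maps; hence the resulting long exact sequence will be one of $\Field[[U]]$-modules as well.

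The one point that requires attention is the grading. First I would observe that both $\Phi_v$ and $\Psi_v$ preserve the $\delta$-grading given by $|E|$: inspecting the defining formulas \eqref{eq:phiformula} and \eqref{eq:psivdef}, each map only alters the evaluation of the characteristic vector on the distinguished vertex $v$ (together with the accompanying $U$-powers), while leaving the subset $E$ --- and therefore $|E|$ --- unchanged. Thus the three complexes are $\delta$-graded complexes whose differentials drop $|E|$ by one, and $\Phi_v,\Psi_v$ are degree-preserving chain maps fitting into a short exact sequence of such complexes.

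With this in place, the zig-zag lemma produces the long exact sequence, in which the connecting homomorphism is induced by the total differential and hence drops the $\delta$-grading by one. This degree drop is exactly what turns the connecting map into the arrow $\HFmComb_{i+1}(G_{+1}(v))\to\HFmComb_i(G-v)$, while the maps induced by $\Phi_v$ and $\Psi_v$ preserve $i$, yielding the sequence in the statement. There is essentially no obstacle here: the argument is entirely formal once the grading compatibility is noted, and it runs in complete parallel with the corollary to Proposition~\ref{prop:ShortExacta} already established for the $U=0$ theory.
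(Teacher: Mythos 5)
Your proposal is correct and matches the paper's own argument: apply the zig-zag lemma to the short exact sequence of Theorem~\ref{thm:ShortExact}, observing that $\Phi_v$ and $\Psi_v$ preserve the $\delta$-grading $|E|$ so that the connecting homomorphism accounts for the degree shift. Nothing further is needed.
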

\begin{proof}
  The short exact sequence of Theorem~\ref{thm:ShortExact} induces a
  long exact sequence on the homologies, and it is easy to see that
  both $\Psi _v$ and $\Phi _v$ respects the grading of a generator
  $[K,E]$ given by the cardinality of $E$, hence the long exact
  sequence admits the form stated in the corollary.
\end{proof}

Theorem~\ref{thm:ShortExact} also gives a long exact sequence 
 \[
\ldots \to \HFaComb ^{[n]} _{i+1} (G_{+1}(v)) \to \HFaComb ^{[n]}_i (G-v)\to
\HFaComb ^{[n]}_i(G) \to \HFaComb ^{[n]}_i (G_{+1}(v))\to \
\HFaComb ^{[n]}_{i-1}(G-v) \to \ldots.
\]
This is gotten by tensoring the short exact sequence
from Equation~\eqref{eq:minusses} with $\Field[U]/U^n$, and then taking the
associated long exact sequence in homology.

\end{document}